\newcommand{\romA}{\uppercase\expandafter{\romannumeral1} }
\newcommand{\romB}{\uppercase\expandafter{\romannumeral2} }
\newcommand{\romC}{\uppercase\expandafter{\romannumeral3} }
\providecommand{\keywords}[1]
{
\textbf{\text{Keywords: }}#1
}
\newcommand\backmatter{%
  \if@openright
    \cleardoublepage
  \else
    \clearpage
  \fi
   }
\newtheorem{proposition}{Proposition}
\newtheorem{remark}{Remark}
\begin{document}
\title{High-order WENO finite-difference methods for hyperbolic nonconservative systems of Partial Differential Equations}

\author[1]{
Baifen Ren\footnote{School of Mathematical Sciences, Ocean University of China, Qingdao 266100, China. E-Mail: {\tt renbaifen@stu.ouc.edu.cn}},
Carlos Parés\footnote{Corresponding author. School of Mathematical Sciences, Departamento de Análisis Matemático, Estadística e Investigación Operativa, y Matemática aplicada, Universidad de Málaga, Bulevar Louis Pasteur, 31, 29010, Málaga, Spain. E-Mail: {\tt pares@uma.es}}
}
\date{}
\maketitle
\begin{abstract}
 This work aims to extend the well-known high-order WENO finite-difference methods for systems of conservation laws to nonconservative hyperbolic systems.
 The main difficulty of these systems both from the theoretical and the numerical points of view comes from the fact that the definition of weak solution is not unique: according to the theory developed by Dal Maso, LeFloch, and Murat in 1995, it depends on the choice of a family of paths. A new strategy is introduced here that allows non-conservative products to be written as the derivative of a generalized flux function that is defined locally on the basis of the selected family of paths. WENO reconstructions are then applied to this generalized flux. Moreover, if a Roe linearization is available, the generalized flux function can be evaluated through matrix-vector operations instead of path-integrals.
 Two different known techniques are used to extend the methods to problems with source terms and the well-balanced properties of the resulting schemes are studied. 
 These numerical schemes are applied to a coupled Burgers' system and to the two-layer shallow water equations in one- and two- dimensions to obtain high-order methods that preserve water-at-rest  steady states.
 

\end{abstract}
\keywords{WENO finite difference scheme, High order accuracy, Well-balanced scheme, Nonconservative equations, Path-conservative method.}
\section{Introduction}

We aim to construct well-balanced high-order WENO finite-difference schemes for hyperbolic nonconservative problems of the form
\begin{equation}\label{eq:1}
	U_t + A_1(U) U_x + A_2 (U) U_y = S_1(U)H_x + S_2(U)H_y,
 \end{equation}
where the unknown $U(x,y,t)$ takes value in an open convex set $\Omega \in \mathbb{R}^N$; $A_i(U)$, $i=1,2$ are smooth matrix-valued functions; $S_i(U)$, $i=1,2$ are vector-valued functions, and $H(x,y)$ is a known function from
$\mathbb{R}^2$ to $\mathbb{R}$.  The 1D case
 \begin{equation}\label{eq:3}
	U_t + A(U) U_x  = S(U)H_x
 \end{equation}
 will also be considered.

PDE systems of the form
\begin{equation}\label{eq:2}
	U_t + F_1(U)_x + F_2(U)_y + B_1(U)U_x + B_2(U)U_y= S_1(U)H_x + S_2(U)H_y,
 \end{equation}
 where $F_i(U)$, $i=1,2$ are the flux function and $B_i(U)$, $i=1,2$ are matrix-valued functions, can be considered as particular cases of \eqref{eq:1}.
 Systems of conservation laws ($B_i \equiv 0$, $S_i \equiv 0$, $i=1,2)$ and systems of balance laws ($B_i \equiv 0$, $i=1,2)$ are in turn particular cases of \eqref{eq:2}.

 The numerical methods will be applied to the 1D and 2D hyperbolic two-layer shallow-water equations that govern the flow of two superposed layers of immiscible homogeneous fluids. This system is used in different ocean and coastal engineering simulations and there is a vast literature focusing on its numerical analysis: see for instance 
 \cite{Castro-Diaz2011, CASTRO2004202, CHALMERS2013111, Chu2022, DONG2024106193, Liu2021, 10.2166/hydro.2020.207}. In order to illustrate the general procedure, we will also consider the 1D coupled Burgers' system introduced in \cite{Castro2001107}:
 \begin{equation} \label{CBs}
 \left\{
 \begin{array}{l}
 u_t + u u_x + u v_x = 0, \\
 v_t + v u_x + v v_x = 0,
 \end{array}
 \right.
 \end{equation}
 that can be written in the form \eqref{eq:3} with
 $$
 A(U) = \left[ \begin{array}{cc} u & u \\ v & v \end{array} \right],
 \quad U \in \Omega,$$
 with
 $$
 \Omega = 
 \left\{ \left[ \begin{array}{c} u \\ v \end{array} \right]  \in \mathbb{R}^2 \text{ s.t. }  u + v > 0\right\}.$$
 The eigenvalues of $A(U)$ are
 $$ \lambda_1(U) = 0, \quad \lambda_2(U) = u + v,$$
 and as
 so that the system is strictly hyperbolic in $\Omega$.

 The major difficulty of systems \eqref{eq:1} or \eqref{eq:2}, from both the theoretical and the numerical points of view, comes from the fact that the presence of nonconservative products makes that, unlike for systems of conservation laws, the definition of weak solution is not unique. In the theory developed by  Dal Maso, LeFloch, and Murat in \cite{Maso1995DefinitionAW}, nonconservative products are defined as Borel measures based on the choice of a family of paths, i.e. a Lipschitz-continuous function $\Psi: [0,1] \times \Omega \times \Omega  \to \Omega$ satisfying
 $$
 \Psi(0;U_L, U_R) = U_L, \quad \Psi(1; U_L, U_R) = U_R$$
 for all $U_L$, $U_R \in \Omega$, and
 $$
 \Psi(s; U,U) = U$$
 for all $U \in \Omega$ and $s \in [0,1]$. Once the family of paths has been used, the generalized Rankine-Hugoniot conditions satisfied by the admissible weak solutions at a jump are the following
 $$
 \sigma(U^+ - U^-) = \int_0^1 \left( \sum_{i=1}^2 n_i A_i(\Psi(s;U^-, U^+)) \right) \partial_s \Psi(s; U^-, U^+)\, ds,
 $$
 where $\sigma$ is the propagation speed; $U^\pm$ the lateral limits of the solution; and $\vec n = (n_1, n_2)$ a unit vector normal to the jump.  For instance, in the particular case of system \eqref{CBs}, it can be checked that 
  the choice of the family of straight segments 
 \begin{equation}\label{psi1} \Psi_1(s; U^-,U^+) = U^- + s(U^+ - U^-), \quad s \in [0,1] \end{equation}
leads to the jump conditions
\begin{equation}\label{jumppsi1}
 \left\{
 \begin{array}{l}
 \displaystyle \bar u (u^+ - u^-) + \bar u (v^+ - v^-) = \sigma (u^+ - u^-),\\
  \displaystyle \bar v (u^+ - u^-) + \bar v (v^+ - v^-) = \sigma (v^+ - v^-),
  \end{array}
  \right.
 \end{equation}
with
$$ \bar u = \frac{u^+ + u^-}{2}, \quad  \bar v = \frac{v^+ + v^-}{2},$$
while the choice 
\begin{equation}\label{psi2}
 \Psi_2(s; U^-,U^+) = 
 \left[
 \begin{array}{c}
 u^- + s(4 - 3s)(u^+ - u^-) \\
  v^- + s(v^+ - v^-) \\
  \end{array}
 \right], \quad s \in [0,1],
 \end{equation}
leads to 
 \begin{equation}\label{jumppsi2}
 \left\{
 \begin{array}{c}
 \displaystyle \bar u (u^+ - u^-) + u^+ (v^+ - v^-) = \sigma (u^+ - u^-),\\
  \displaystyle v^- (u^+ - u^-) + \bar v (v^+ - v^-) = \sigma (v^+ - v^-).
  \end{array}
  \right.
 \end{equation}
 Any choice of family paths leads to a consistent definition of weak solutions from the mathematical point of view. Therefore, given a particular application, the choice of the adequate family of paths has to be based on the consistency with the physics of the problem: for instance, the family of paths can be given by the viscous profiles corresponding to the neglected viscous terms (see \cite{CFMP2013} for a general discussion and \cite{Berthon} for the particular case of System \eqref{CBs}).

 Based on this theory, a framework for designing finite-volume methods for nonconservative systems was introduced in \cite{doi:10.1137/050628052} based on the concept of path-conservative methods. These methods have been extensively applied to solve nonconservative systems: see, for instance,  \cite{Diaz_Cheng_Chertock_Kurganov_2014, Castro_2006, CASTRO2017131, Castro2010Mc, doi:10.1137/110828873}.

 In this paper, we focus on WENO finite-difference schemes. In the past decades, these methods have been widely applied to systems of conservation and balance laws: see for instance \cite{BORGES20083191, CASTRO20111766, HENRICK2005542, JIANG1996202, Shu_1998, Zhu2017}.
In \cite{Chu2022} a fifth-order A-WENO finite-difference scheme for 1D and 2D systems of nonconservative hyperbolic systems was introduced. This scheme is based on the path-conservative central-upwind method and the global flux approach in which the integral of the nonconservative term is considered as a new flux function of the system. A high-order conservative method is then applied to the formal system of conservation laws. The evaluation of the new flux function requires the computation of the cell integrals of the nonconservative products using a high-order quadrature formula. Therefore, in addition to the flux reconstruction of the flux, it also requires high-order reconstructions of $U$ at the quadrature points. If this approach is used, the method is reduced to the conservative one obviously when $A(U)$ is the Jacobian of a flux function $F$. 
Recently, in \cite{Balsara2024} a finite-difference WENO method has been introduced based on state reconstruction that does not require the computation of cell integrals of the conservative products. In contrast to global flux-based methods, they reduce to the conservative method only when $A(U)$ is linear.

The goal of this paper is to introduce a \textit{local flux} approach to design high-order finite-difference methods for nonconservative systems. The key idea is to apply a standard high-order WENO reconstruction operator to the nonconservative products computed using the selected family of paths: more precisely, in the 1D case, the selected WENO operator will be applied to reconstruct quantities of the form
$$
\int_0^1  A^\pm(\Psi(s;U_i, U_j) ) \partial_s \Psi(s; U_i, U_j)\, ds, \quad j \in \mathcal{S}_i,
$$
where $\mathcal{S}_i$ represents the stencil of the $i$th point and $A^\pm(U)$ represents a splitting of the matrix system $A(U)$.   It will be seen that, if a path-consistent Roe linearization is available (see \cite{ROE1997250, TOUMI1992360}), the quantities to be reconstructed can be computed by using matrix-vector products instead of path-integrals.
The main advantages of this new one are the following:
\begin{itemize}
    \item the accuracy in space of the methods only depends on the order of the selected WENO reconstruction provided that the selected family of paths satisfies a symmetry property to be described;
    \item no integrals involving quadrature points in the cells have to be computed which avoids having to use an additional reconstruction operator with uniform accuracy in the entire cells;
    \item the methods reduce to the standard finite-difference conservative WENO schemes when $A(U)$ is the Jacobian matrix of a flux function $F(U)$.
\end{itemize}
Two different matrix splittings will be considered here based on the standard Lax-Friedrichs and Upwind approaches to illustrate the strategy. Nevertheless it can also be applied to more general splittings or even to WENO reconstructions that are not based on splitting technique, as in the case of A-WENO. The application of the local flux approach combined with A-WENO reconstructions will be discussed in a forthcoming work.

A relevant property to be satisfied by the numerical methods solving systems of the form \eqref{eq:1} or \eqref{eq:2} is the preservation of some or all the steady-state solutions of the system, i.e. the well-balanced property. For instance, in the context of the one or the two-layer shallow-water equations, a minimal requirement to the numerical methods is to exactly preserve the steady states corresponding to water-at-rest, i.e. to satisfy the {\it C-property} according to \cite{BERMUDEZ19941049}.
Different techniques have been proposed to design well-balanced schemes including hydrostatic reconstruction related \cite{S1064827503431090, Castro20072055}, relaxation methods \cite{Relaxation:10.1137/06067167X}, consistent discretization of the flux and source term \cite{LEVEQUE1998346, xing2005high}, etc. See also \cite{François_Bouchut_2010, doi:10.1137/0733001, Mandli_2013}.  

In the context of finite-volume methods, a general strategy to design well-balanced methods has been described in
\cite{Castro2020jsc}. 
In this strategy, a stationary solution whose average is the numerical approximation at every cell has to be computed at every time step. Then, a standard reconstruction operator is applied to the differences in the cell values at the stencil and the cell averages of the local stationary solution.  In \cite{PARES2021109880} this strategy has been extended to WENO finite-difference methods for systems of balance laws. Two different strategies will be followed here to obtain well-balanced numerical methods: one of them is the extension to nonconservative systems of the strategy introduced in this last reference, while the other consists of combining the Upwind splitting scheme with an adequate choice of family of paths. 

As it is well known (see \cite{Castro_2008}), in the case of nonconservative systems, the numerical solutions obtained with finite-difference or similar methods that are formally consistent with the definition of weak solution related to a given family of paths may converge to functions that are not weak solutions according to that family. Nevertheless, it will be seen in Section \ref{sec:numerical_sol} that the numerical results obtained for the two-layer shallow-water equations are similar to those obtained with other methods. Nevertheless, in order to ensure the convergence to functions that are weak solutions according to the selected family of paths, the numerical dissipation close to shocks has to be controlled: see for instance \cite{BeljadidCiCP-21-913}. In \cite{ PIMENTELGARCIA2022111152, pimentelgarca2024highorder} high-order finite-volume numerical methods that are able to correctly capture isolated shock waves have been designed based on the use of a discontinuous reconstruction operator in cells where a shock is detected: similar strategies can be adapted to the numerical methods introduced here, what will be done in future works.

The rest of the paper is organized as follows. In Section \ref{sec:Path_weno}, firstly, the new WENO path-conservative schemes for 1D homogeneous (i.e. without source terms) nonconservative systems are introduced and their high-order accuracy property is proved. In this section the general problem is considered, so that the family of paths is, in principle,  arbitrary. Nevertheless, it will be shown that a symmetry property has to be satisfied to ensure the high-order accuracy of the method. In Section \ref{sec:well-balance}, source terms are included and two strategies to obtain well-balanced methods are described. In Section \ref{sec:nuemrical_2D}, the proposed schemes are extended to 2D nonconservative systems. In Section \ref{Sec:two_swe}, we apply the proposed scheme to 1D and 2D two-layer shallow water equations: the numerical results are presented in Section \ref{sec:numerical_sol}. Finally, some conclusions are drawn in Section \ref{sec:conclusion}.

\section{Path-Conservative WENO finite-difference reconstruction methods}
\label{sec:Path_weno}
\subsection{Conservative WENO finite difference schemes: a brief overview }\label{ss:nummeth}
The goal of this paper is to extend high-order finite-difference methods based on flux reconstructions for conservation laws system
\begin{equation}
\label{eq:general_cf}
U_t + F(U)_x=0,
\end{equation}
to nonconservative systems
\begin{equation}
\label{eq:noncon}
U_t + A(U)U_x =0.
\end{equation}
It will be assumed here that the system is strictly hyperbolic, i.e. for every $U$, the matrix $A(U)$ has N different real eigenvalues
$$\lambda_1(U), \dots, \lambda_N(U).$$
In particular, systems of the form
\begin{equation}
\label{eq:general_noncon}
U_t + F(U)_x +B(U)U_x=0
\end{equation}
will be considered that can be written in the form \eqref{eq:noncon} with
$$
A(U) = J(F(U)) + B(U),$$
where $J(F(U))$ represents the Jacobian of the flux function $F(U)$. 

Semi-discrete high-order finite-difference methods for systems of conservation laws \eqref{eq:general_cf} have the form:
\begin{equation}
\label{eq:general_disc}
\frac{dU_i}{dt} + \frac{1}{\Delta x}\left({F}_{i+1/2} - {F}_{i-1/2}\right) = 0,
\end{equation}
where $F_{i + 1/2}$ is a high-order reconstruction of the flux function. 
The computational domain is \( [a, b] \). Uniform meshes with a constant step size \( \Delta x \) will be considered, with cell centers denoted as \( x_i \). The following notation is used for the cell interface:
$$
x_{i + \frac{1}{2}} = x_i + \frac{\Delta x}{2}.
$$
In the particular case of the  WENO reconstruction of order $p = 2k + 1$ for systems of balance laws, two flux reconstructions are computed using the values at the points $x_{i-k}, \dots, x_{i+k}$:
\begin{eqnarray}
F^L_{i+1/2} &  = &  \mathcal{R}^L(F(U_{i-k}), \dots, F(U_{i+k})), \\
F^R_{i-1/2} & = &  \mathcal{R}^R(F(U_{i-k}), \dots, F(U_{i+k})). 
\end{eqnarray}
These are the so-called left- and right-biased reconstructions, related by the equality:
$$
\mathcal{R}^L(F(U_{i-k}), \dots, F(U_{i+k})) = \mathcal{R}^R(F(U_{i+k}), \dots, F(U_{i-k})). 
$$
In order to compute the numerical flux $F_{i+ 1/2}$, first a splitting of the flux function  is considered
$$
F(U) = F^+(U) + F^-(U),
$$
in such a way that the eigenvalues of the Jacobian $J^+(U)$ (resp. $J^-(U)$) of $F^+(U)$ (resp. $F^-(U)$) are positive (resp. negative). A standard choice is the Lax-Friedrichs flux-splitting:
$$
F^\pm(U) = \frac{1}{2} \left( F(U) \pm \alpha U \right),
$$
where $\alpha$ is the maximum of the absolute value of the eigenvalues of $\{J(U_i)\}$, this maximum being taken over either local (WENO-LLF) or global (WENO-LF):  see \cite{JIANG1996202, Shu_1998}.

Then, the reconstruction operator is applied to $F^\pm$:
\begin{eqnarray}
F^{+}_{i+1/2} &  = &  \mathcal{R}^L(F^+(U_{i-k}), \dots, F^+(U_{i+k})), \\
F^{-}_{i+1/2} & = &  \mathcal{R}^R(F^-(U_{i+1-k}), \dots, F^-(U_{i+1+k})), 
\end{eqnarray}
and finally,
\begin{equation}\label{split}
F_{i+1/2} =  F^{+}_{i+1/2} + F^{-}_{i+1/2}.
\end{equation}
The reconstruction then satisfies
$$
\frac{1}{\Delta x} \left( F_{i + 1/2} - F_{i-1/2} \right) = F(U)_x + O(\Delta x^{2k+1}), \quad \forall i.
$$ 
Any version of WENO reconstructions can be selected. In particular, in the numerical tests shown in Section \ref{sec:numerical_sol}, WENOZ is used (see \cite{CASTRO20111766}): for the sake of completeness, the expression of the fifth-order WENOZ reconstruction is recalled in Appendix \ref{app:A}.

\subsection{Extension to nonconservative systems }\label{ss:nummeth_noncon}
In order to extend these numerical methods  to \eqref{eq:noncon}, let us first rewrite \eqref{eq:general_disc} as follows:
\begin{equation}
\label{eq:minus_disc}
\frac{dU_i}{dt}  + \frac{1}{\Delta x}\left({F}_{i+1/2} - {F}\left(U_{i}\right)+{F}\left(U_{i}\right)- {F}_{i-1/2}\right) = 0,
\end{equation}
or, equivalently
\begin{equation}
\frac{dU_i}{dt} + \frac{1}{\Delta x} \left( \widehat{D}^-_{i + 1/2} +  \widehat{D}^+_{i - 1/2}\right) = 0,
\label{eq:numerical_sch}
\end{equation}
with
$$ \widehat{D}^-_{i + 1/2} = {F}_{i+1/2} - {F}\left(U_{i}\right), \quad  \widehat{D}^+_{i - 1/2} =  {F}\left(U_{i}\right)- {F}_{i-1/2}.
$$

One has then
\begin{eqnarray*}
\widehat{D}^-_{i + 1/2} & = &  {F}_{i+1/2} - {F}(U_{i})\\
& = &  F^{+}_{i+1/2} - {F^+}(U_{i}) +  F^{-}_{i+1/2} -  {F^-}(U_{i}) \\
& = & \mathcal{R}^L(F^+(U_{i-k}), \dots, F^+(U_{i+k})) - {F^+}(U_{i}) \\
& + & \mathcal{R}^R(F^-(U_{i+1-k}), \dots, F^-(U_{i+1 + k})) - {F^-}(U_{i})\\
& = & \mathcal{R}^L(F^+(U_{i-k}) -  {F^+}(U_{i}), \dots, F^+(U_{i+k}) - {F^+}(U_{i}) )\\
& + & \mathcal{R}^R(F^-(U_{i+1-k}) - {F^-}(U_{i}), \dots, F^-(U_{i+1 + k}) - {F^-}(U_{i})) \\
& = & \mathcal{R}^L(D^+_{i,i-k}, \dots, D^+_{i,i+k} ) + \mathcal{R}^R(D^-_{i, i+1-k}, \dots, D^-_{i,i+1+k}) ,
\end{eqnarray*}
where the following notation has been used
$$
D^\pm_{j,k} = F^\pm(U_k) - F^\pm(U_j), \quad \forall j,k.
$$
Analogously
\begin{eqnarray*}
\widehat{D}^+_{i - 1/2} & = &  {F}(U_{i}) - F_{i-1/2}\\
& = & \mathcal{R}^L(D^+_{i-1-k,i}, \dots, D^+_{i-1 +k,i} ) + \mathcal{R}^R(D^-_{i-k,i}, \dots, D^-_{i+k,i}) .
\end{eqnarray*}

Finally, the last ingredient required to extend the numerical methods to nonconservative systems is a family of paths. Let us consider, in principle, an arbitrary family
$\Psi:  [0,1] \times \Omega \times \Omega \to \Omega$. Using $\Psi$ we have:
$$
D^\pm_{j,k} = F^\pm(U_k) - F^\pm(U_j) = \int_0^1
JF^\pm \left( \Psi(s; U_j, U_k) \right)\partial_s \Psi(s; U_j, U_k)\, ds,
$$
where $JF^\pm$ represent the Jacobian matrices of $F^\pm$.
The natural extension of the numerical method \eqref{eq:general_disc} to the system \eqref{eq:noncon} is then given by \eqref{eq:numerical_sch} with
\begin{eqnarray}
\widehat{D}^-_{i + 1/2} & = &  \mathcal{R}^L(D^+_{i,i-k}, \dots, D^+_{i,i+k} ) + \mathcal{R}^R(D^-_{i, i+1-k}, \dots, D^-_{i,i+1+k}) , \label{eq:D-}\\
\widehat{D}^+_{i - 1/2} & = &  \mathcal{R}^L(D^+_{i-1-k,i}, \dots, D^+_{i-1 +k,i} ) + \mathcal{R}^R(D^-_{i-k,i}, \dots, D^-_{i+k,i}) . \label{eq:D+}
\end{eqnarray}
and
\begin{equation}\label{eq:Djk}
D^\pm_{j,k} = \int_0^1 A^\pm(\Psi(s; U_j, U_k) ) \partial_s \Psi(s; U_j, U_k)\, ds,
\end{equation}
where 
$A^\pm(\Psi(s; U_j, U_k)$ is a matrix-splitting to be adequately chosen.
 For instance, for system \eqref{CBs}, since the two eigenvalues are positive, a natural choice is given by 
\begin{equation}\label{CBssplit}
  A^+(\Psi(s; U_j, U_k) = A(\Psi(s; U_j, U_k), \quad A^-(\Psi(s; U_j, U_k) = 0. 
\end{equation}
Please note that, while for systems of conservation laws only two reconstructions per intercell are required, here 4 reconstructions are needed in $x_{i+1/2}$: two reconstructions to compute $\widehat{D}^+_{i + 1/2}$ and two others to compute $\widehat{D}^-_{i + 1/2}$.

Observe that, while in the case of a system of conservation laws, the resulting numerical method is independent of the chosen family of paths (since it is equivalent to \eqref{eq:general_disc}), for nonconservative systems the numerical method depends on the chosen family of paths. For instance, in the particular case of System \eqref{CBs}, if the matrix-splitting is given by \eqref{CBssplit},
the choice of the family of paths \eqref{psi1}, whose corresponding jump condition is \eqref{jumppsi1}, leads to
\begin{equation}\label{fluc1}
D^+_{j,k} =  \left[
 \begin{array}{c}
 \displaystyle \bar u_{j,k} (u_k - u_j) + \bar u_{j,k} (v_k - v_j) \\
  \displaystyle \bar v_{j,k} (u_k - u_j) + \bar v_{j,k} (v_k - v_j) 
  \end{array}
  \right], \quad D^-_{j,k} = 0,
\end{equation}
with
$$
\bar u_{j,k} = \frac{u_j + u_k}{2}, \quad \bar v_{j,k} = \frac{v_j + v_k}{2}, $$
while the choice \eqref{psi2}, whose corresponding  jump condition is \eqref{jumppsi2}, 
leads to
\begin{equation}\label{fluc2}
D^+_{j,k} =  \left[
 \begin{array}{c}
 \displaystyle \bar u_{j,k} (u_k - u_j) + u_{k} (v_k - v_j) \\
  \displaystyle v_{j} (u_k - u_j) + \bar v_{j,k} (v_k - v_j) 
  \end{array}
  \right], \quad D^-_{j,k} = 0.
\end{equation}
Definitions \eqref{psi1} and \eqref{psi2} lead to different numerical results: both of them are convergent for smooth solutions but, as it will be seen in Section \ref{ss:accuracy}, only  \eqref{fluc1} leads to a high-order accurate method. On the other hand, the two methods are expected to give different results for discontinuous solutions, since they are formally consistent with the different jump conditions, \eqref{jumppsi1} or \eqref{jumppsi2}, corresponding to the selected paths. According to \cite{Castro_2008} this formal consistency does not ensure that the limits of the numerical solutions satisfy the expected jump conditions. In fact, the methods introduced here can fail in capturing correctly the discontinuities, as any other standard finite-difference type method. Nevertheless, they can be combined with techniques like the ones recently developed in \cite{PIMENTELGARCIA2022111152, pimentelgarca2024highorder} to improve their convergence to the sought weak solutions.   


While for \eqref{CBs} the fluctuations  \eqref{eq:Djk}
can be easily computed, this may be more difficult in other cases where numerical quadrature can be used to compute the integrals. Nevertheless, an alternative form of the method can be given if a Roe linearization is available in which the path-integrals are replaced by matrix-vector products. Remember that a Roe linearization (see \cite{ROE1997250, TOUMI1992360})  is a matrix-valued function $A_{\Psi}: \Omega \times \Omega \mapsto \mathbb{R}^{N }\times \mathbb{R}^{ N}$ that satisfies the following properties:
\begin{enumerate}
\item For each $U, V \in \Omega$ , $A_{\Psi}\left(U,V\right)$ has $N$ distinct real eigenvalues:
$$
\lambda_1\left(U, V\right)<\lambda_2\left(U, V\right)<\cdots<\lambda_N\left(U, V\right) .
$$

 \item $A_{\Psi}(U, U)=A(U)$, for every $U \in \Omega  $.

\item  For any $U, V \in \Omega $,
\begin{equation}
A_{\Psi}\left(U, V\right)\left(V - U\right)=\int_0^1 A\left(\Psi\left(s ; U, V \right)\right) \frac{\partial \Psi}{\partial s}\left(s ; U, V\right) ds .
\label{eq:integral_prop}
\end{equation}
\end{enumerate}
For instance, it can be easily checked that  the matrices
\begin{equation}\label{eq:CBs_Roe}
A_{\Psi_1}(U^-,U^+) = \left[
\begin{array}{cc}
\bar u & \bar u \\
\bar v & \bar v 
\end{array}
\right], \quad 
A_{\Psi_2}(U^-,U^+) = \left[
\begin{array}{cc}
\bar u & u^+ \\
v^- & \bar v 
\end{array}
\right],
\end{equation}
are Roe linearizations for system \eqref{CBs} related to the family of paths $\Psi_1$ and $\Psi_2$ given by \eqref{psi1} and \eqref{psi2} respectively.  If
a Roe linearization is available (as it is the case for the two-layer shallow-water system if the family of straight segments is selected) the fluctuations $D_{j,k}^{\pm}$ can be computed as in \cite{Castro2010Mc}:
\begin{equation}\label{eq:Djk_Roe}
D^\pm_{j,k} = A^\pm_{\Psi}\left(U_j, U_k\right)\left(U_k - U_j\right),
\end{equation}
where 
\begin{equation}
A^\pm_{\Psi}\left(U_j,U_k\right)=\frac{1}{2}\left(A_{\Psi}\left(U_j,U_k\right)\pm Q_{\Psi}\left(U_j,U_k\right)\right)
    \label{eq:Q_split}
\end{equation}
represents a splitting of the Roe linearization. Two different splittings will be considered here:
\begin{itemize}
\item 
Upwind splitting:
 \begin{equation} \label{eq:Rusanov_sch}
Q_{\Psi}\left(U_j, U_k\right) = \left|A_{j,k} \right|,
\end{equation}
where
$$
\left|A_{j,k}\right| = {R}_{j,k} \left| \Lambda_{j,k} \right| {L}_{j,k}. 
$$
Here,  $\left|\Lambda_{j,k} \right|$ is the $N$-dimensional diagonal matrix whose coefficients are the absolute values of the eigenvalues of $A_{j,k}$:
$$ |\lambda_{j,k; 1}|,  \dots,|\lambda_{j,k;N}|;
$$
$R_{j,k}$ is a matrix whose $l$th column $\vec r_{j,k;l}$ is an eigenvector associated to $\lambda_{j,k;l}$; and ${L}_{j,k}= {R_{j,k}^{-1}}$ is a matrix whose arrows are left-eigenvalues.
A standard entropy-fix can be used to avoid the appearance of non-entropy discontinuities, like considering a regularization $|\cdot|_\epsilon$ of the absolute value function like in \cite{HARTEN1983235, SHU1988439}.
\item Lax-Friedrichs(LF) splitting: 
\begin{equation} \label{eq:lf_sch}
Q_{\Psi}\left(U_j, U_k\right) = \alpha I,
\end{equation}
 where $I$ is the identity matrix and $\alpha $ is the global maximum of the absolute value of the eigenvalues, $\alpha  \geq \lvert \lambda_{j,k;l} \rvert,\; l=1, \dots, N$.
 \end{itemize}
The matrices involved in the Upwind splitting can be equivalently written as follows
\begin{equation}
A_\Psi^\pm\left(U, V \right) = P_{\Psi}^\pm\left(U, V\right)A_{\Psi}\left(U, V\right),
\label{eq:PpmA}
\end{equation}
where
\begin{equation}\label{eq:Ppm}
 P^\pm_{\Psi}\left(U, V\right)= R_{\Psi}\left(U, V\right)  M^\pm_{\Psi}\left(U, V\right)  R^{-1}_{\Psi}\left(U, V\right) .
\end{equation}
Here $M^\pm_{\Psi}\left(U, V\right)$ represents the diagonal matrix whose coefficients are
$$
\frac{1}{2}\left(1 \pm \operatorname{sign}\left(\lambda_l(U,V) \right)\right), \quad l=1, \ldots, N,
$$
and $R_{\Psi}\left(U, V\right)$ is a matrix whose $l$th columns is an eigenvector $\vec r_l\left(U, V\right)$ associated to $\lambda_l(U,V)$. The fluctuations corresponding to this splitting can be then computed as follows: given two indices $j,k$, first the coordinates  $\{ \alpha_{j,k;l} \}_{l=1}^N$ of $U_k - U_j$ in the basis of eigenvectors of the Roe matrix $A_{j,k}$, i.e.
$$
U_k - U_j = \sum_{l=1}^N \alpha_{j,k;l}\vec r_{j,k; l},
$$
are computed by solving a linear system 
\begin{equation}\label{linsys}
R_{j,k} \vec \alpha_{j,k} = U_k - U_j.
\end{equation}
Then  one has
$$
D^\pm_{j,k} =  \sum_{l=1}^N  \alpha_{j,k;l}\lambda^\pm_{j,k;l} \vec r_{j,k; l},$$
where, given $\lambda \in \mathbb{R}$, $\lambda^\pm$ represent the positive and negative part of $\lambda$, i.e.
\[
\lambda^{+}= \frac{\lambda + \left| \lambda \right|}{2}, \quad \lambda^{-} = \frac{\lambda - \left| \lambda \right|}{2}.
\]
On the other hand, the method based on the LF splitting may be oscillatory if the reconstructions are not performed in characteristic fields. To avoid this, the reconstructions are computed in practice as follows:
\begin{eqnarray}
\widehat{D}^-_{i + 1/2} & = &   R_{i,i+1} \mathcal{R}^L( L_{i,i+1} D^+_{i,i-k}, \dots,  L_{i,i+1} D^+_{i,i+k} ) \nonumber \\
& & +  R_{i,i+1} \mathcal{R}^R( L_{i,i+1} D^-_{i, i+1-k}, \dots,  L_{i,i+1} D^-_{i,i+1+k}), \label{eq:D-char}\\
\widehat{D}^+_{i - 1/2} & = &  R_{i-1,i}  \mathcal{R}^L(L_{i-1,i}D^+_{i-1-k,i}, \dots, L_{i-1,i} D^+_{i-1 +k,i} ) \nonumber \\
& & +  R_{i-1,i} \mathcal{R}^R(L_{i-1,i} D^-_{i-k,i}, \dots, L_{i-1,i} D^-_{i+k,i}) . \label{eq:D+char}
\end{eqnarray}

Observe that, if the LF splitting is chosen and reconstructions in characteristic variables are performed, the right and left eigenvectors of the Roe matrices $A_{i,i+1}$ have to be computed.  On the other hand, if the Upwind reconstruction is selected, the eigenvectors and eigenvalues of all the Roe matrices $A_{k,j}$ are required. Moreover, the linear system \eqref{linsys}
has to be solved. Therefore, it is more computationally expensive for homogeneous problems. Nevertheless, it will be seen in Section \ref{sec:well-balance} that the numerical treatment of the source term can compensate for this disadvantage.

\begin{remark}
Since the expression of WENO reconstructions is a linear combination of the fluxes whose  coefficients depend nonlinearly on the data through the smoothness indicators, it can be shown that, for problems of the form
\eqref{eq:general_noncon}, 
 the numerical method \eqref{eq:general_disc} can be written in the form
\begin{equation}
\frac{dU_i}{dt}  = -\frac{1}{\Delta x} \left( F_{i+1/2} - F_{i-1/2} + \widehat{B}^-_{i + 1/2} +  \widehat{B}^+_{i - 1/2}\right),
\label{eq:numerical_sch_2}
\end{equation}
where $F_{i + 1/2}$ and  $\widehat B_{i + 1/2}$  are, respectively, standard WENO reconstructions of the flux function and  the nonconservative terms, in which the nonlinear coefficients are the same.
\end{remark}

\subsection{Accuracy of the methods}\label{ss:accuracy}

Let us check that \eqref{eq:numerical_sch}-\eqref{eq:D-}-\eqref{eq:D+}-\eqref{eq:Djk} is a high-order numerical method for \eqref{eq:noncon}. 
\begin{proposition} \label{prop:order} Let us consider a smooth solution $U(x,t)$ of \eqref{eq:noncon} and assume that $A(U)$ and $\Psi$ are smooth. We also assume that $\Psi$ satisfies
\begin{equation} 
    \int_0^1 A(\Psi(s; V, U )) \partial_s \Psi(s; V, U)\, ds = - \int_0^1 A(\Psi(s; U, V )) \partial_s \Psi(s; U, V)\, ds
    \label{eq:path_cons}
\end{equation}
for all $U, V \in \Omega$.
Then we have
\begin{equation}
\partial_t U(x_i,t) + \frac{1}{\Delta x} \left( \widehat{D}^-_{i + 1/2} +  \widehat{D}^+_{i - 1/2}\right) = O(\Delta x^{2k + 1}), \label{eq:order}
\end{equation}
where $p = 2k+1$ is the order of the reconstruction operator.
\end{proposition}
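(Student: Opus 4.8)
The plan is to reduce the estimate to the already-known accuracy of the underlying conservative WENO scheme (Section \ref{ss:nummeth}) by rewriting the fluctuations $\widehat D^{\mp}_{i\pm 1/2}$ in terms of a suitable \emph{local} generalized flux. Fix $t$; since $U(\cdot,t)$ is a smooth solution of \eqref{eq:noncon} one has $\partial_t U(x_i,t) = -A(U(x_i,t))\,\partial_x U(x_i,t)$, so \eqref{eq:order} is equivalent to showing that $\tfrac{1}{\Delta x}\big(\widehat D^-_{i+1/2}+\widehat D^+_{i-1/2}\big)=A(U(x_i,t))\,\partial_x U(x_i,t)+O(\Delta x^{2k+1})$. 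For each mesh index $i$ I would introduce the smooth functions
$$\Phi^{\pm}_i(x):=\int_0^1 A^{\pm}\!\big(\Psi(s;U(x_i,t),U(x,t))\big)\,\partial_s\Psi\big(s;U(x_i,t),U(x,t)\big)\,ds,\qquad \Phi_i:=\Phi^+_i+\Phi^-_i,$$
so that $D^\pm_{i,m}=\Phi^\pm_i(x_m)$ by \eqref{eq:Djk}. A short Taylor expansion around $x=x_i$, using $\Psi(0;V,W)=V$, $\Psi(1;V,W)=W$ and $\Psi(s;V,V)\equiv V$, gives $\Phi_i(x)=A(U(x_i,t))\big(U(x,t)-U(x_i,t)\big)+O((x-x_i)^2)$, hence $\Phi_i(x_i)=0$ and $\Phi_i'(x_i)=A(U(x_i,t))\,\partial_x U(x_i,t)$.

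The first step is to observe that $\widehat D^-_{i+1/2}$ in \eqref{eq:D-} is, verbatim, the conservative WENO flux of Section \ref{ss:nummeth} for the flux $\Phi_i$ with splitting $\Phi^\pm_i$, evaluated at $x_{i+1/2}$: $\mathcal{R}^L$ acts on $\{\Phi^+_i(x_\ell)\}$ on the left-biased stencil $\{i-k,\dots,i+k\}$ of $x_{i+1/2}$, and $\mathcal{R}^R$ on $\{\Phi^-_i(x_\ell)\}$ on the right-biased stencil $\{i+1-k,\dots,i+1+k\}$. The second, and crucial, step concerns $\widehat D^+_{i-1/2}$, which by \eqref{eq:D+} uses the \emph{reversed} fluctuations $D^\pm_{m,i}$. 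Here the symmetry hypothesis \eqref{eq:path_cons} enters: it states that the full fluctuation is antisymmetric, $D_{V,U}=-D_{U,V}$, and together with the structure of the chosen splitting this yields $D^\pm_{m,i}=-D^\pm_{i,m}=-\Phi^\pm_i(x_m)$ (the identity $D^\pm_{m,i}=-D^\pm_{i,m}$ is immediate for the Lax-Friedrichs splitting $A^\pm=\tfrac12(A\pm\alpha I)$, where the $\alpha$-contribution cancels; for the upwind splitting it holds whenever the Roe linearization satisfies $A_\Psi(U,V)=A_\Psi(V,U)$, which is the case for the straight-segment family). Since the WENO reconstruction operators are \emph{odd} --- the smoothness indicators are homogeneous of degree two in the data, so the nonlinear weights are unchanged under a global sign flip --- it follows that $\widehat D^+_{i-1/2}$ equals minus the conservative WENO flux for $\Phi_i$ (again with splitting $\Phi^\pm_i$) at $x_{i-1/2}$, because the stencils $\{i-1-k,\dots,i-1+k\}$ and $\{i-k,\dots,i+k\}$ appearing in \eqref{eq:D+} are precisely the left- and right-biased stencils of the interface $x_{i-1/2}$.

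Combining the two steps, $\widehat D^-_{i+1/2}+\widehat D^+_{i-1/2}$ is the difference of the conservative numerical fluxes of the smooth flux $\Phi_i$ across the cell around $x_i$. Applying to $\Phi_i$ the accuracy property of the conservative scheme recalled in Section \ref{ss:nummeth} --- with the remainder $O(\Delta x^{2k+1})$ uniform in $i$, which is legitimate since $\{\Phi_i(U(\cdot,t))\}_i$ has derivatives up to order $2k+1$ bounded uniformly on the compact range of $U(\cdot,t)$ because $A$ and $\Psi$ are smooth --- yields $\tfrac1{\Delta x}\big(\widehat D^-_{i+1/2}+\widehat D^+_{i-1/2}\big)=\Phi_i'(x_i)+O(\Delta x^{2k+1})=A(U(x_i,t))\,\partial_x U(x_i,t)+O(\Delta x^{2k+1})$, which is the claim.

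I expect the genuinely delicate point to be the second step: showing that \eqref{eq:path_cons}, combined with the chosen splitting, converts the WENO reconstruction of the reversed fluctuations $D^\pm_{m,i}$ into minus that of the forward ones $D^\pm_{i,m}$. It is exactly here that the symmetry of $\Psi$ is indispensable --- a path such as $\Psi_2$ in \eqref{psi2}, which violates \eqref{eq:path_cons}, makes $\widehat D^+_{i-1/2}$ a reconstruction of a \emph{different} flux at $x_{i-1/2}$, so that the telescoping fails and, consistently, the high order is lost. A further point requiring care is that the "forward" and "reversed" local fluxes both refer to the \emph{same} $\Phi_i$ only because the second subscript in \eqref{eq:D+} is frozen at $i$; had the scheme been written relative to the index $i-1$, one would instead have to compare $\Phi_{i-1}$ with $\Phi_i$ and control the resulting $O(\Delta x)$ mismatch of the base points.
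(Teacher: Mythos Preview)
Your proof is correct and follows the same strategy as the paper: introduce a local generalized flux $G_i^t$ (your $\Phi_i$), show $G_i^t(x_i)=0$ and $\partial_x G_i^t(x_i)=A(U(x_i,t))U_x(x_i,t)$, use the symmetry \eqref{eq:path_cons} to identify $\widehat D^-_{i+1/2}$ and $-\widehat D^+_{i-1/2}$ with the conservative WENO fluxes of $G_i^t$ at $x_{i\pm 1/2}$, and invoke the accuracy of the conservative scheme. Your discussion of the splitting-level antisymmetry $D^\pm_{m,i}=-D^\pm_{i,m}$ (for the LF and symmetric-Roe upwind splittings) and of the oddness of the WENO operator is in fact more explicit than the paper's proof, which simply writes $D^\pm_{j,i}=-G_i^{t,\pm}(x_j)$ without elaborating on why the symmetry carries over to each half of the splitting.
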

\begin{proof}
Given an index $i$ and a time $t$, let us define the function $G^t_i(x)$ as follows:
\begin{equation}\label{eq:G_i}
G^t_i(x) = \int_0^1 A(\Psi(s; U(x_i,t), U(x,t) ) \partial_s \Psi(s; U(x_i,t), U(x,t))\, ds.
\end{equation}
This function satisfies
$$
G^t_i(x_i) = 0, \quad \partial_x G_i^t(x_i) = A(U(x_i,t))U_x(x_i, t).$$
In effect,
$$
G_i^t(x_i) =  \int_0^1 A(\Psi(s; U(x_i,t), U(x_i, t) ) \partial_s \Psi(s; U(x_i,t), U(x_i,t))\, ds = 0, $$
since
$$
\Psi(s; U(x_i,t), U(x_i,t)) = U(x_i, t), \quad \forall s.
$$
On the other hand:
\begin{eqnarray*}
\partial_x G^t_i(x_i) &=& \lim_{h \to 0 }\frac{G_i^t(x_i + h) - G^t_i(x_i)}{h} \\
& = & \lim_{h \to 0} \frac{1}{h}  \int_0^1 A(\Psi(s; U(x_i,t), U(x_i+h,t) ) )\partial_s \Psi(s; U(x_i,t), U(x_i + h,t))\, ds \\
&  = &  \lim_{h \to 0} \int_0^1 \frac{1}{h}\left(A(\Psi(s; U(x_i,t), U(x_i + h,t) ) ) - A(U(x_i, t)) \right) \partial_s \Psi(s; U(x_i,t), U(x_i + h,t))\, ds \\
&  & \quad + \lim_{h \to 0} A(U(x_i,t))  \frac{1}{h} \int_0^1\partial_s \Psi(s; U(x_i,t), U(x_i + h,t))\, ds \\
&  = &  \lim_{h \to 0} \int_0^1 \frac{1}{h}\left(A(\Psi(s; U(x_i,t), U(x_i + h,t) )) - A(U(x_i, t)) \right) \partial_s \Psi(s; U(x_i,t), U(x_i + h,t))\, ds \\ 
&  & \quad + \lim_{h \to 0} A(U(x_i,t))  \frac{U(x_i + h, t) - U(x_i, t)}{h}\\
& = & A(U(x_i,t))U_x(x_i, t),
\end{eqnarray*}
where, in the first term, it has been used again that 
$$\partial_s \Psi(s; U(x_i,t), U(x_i + h,t)) \to \partial_s \Psi(s; U(x_i,t), U(x_i,t)) = 0.
$$
Observe that, for all $j$:
$$
D_{i,j} = G^t_i(x_j), \quad D_{j,i} = -G_i^t(U)(x_j).$$
Therefore
\begin{eqnarray*}
\widehat{D}^-_{i + 1/2} & = &  \mathcal{R}^L(D^+_{i,i-k}, \dots, D^+_{i,i+k} ) + \mathcal{R}^R(D^-_{i, i+1-k}, \dots, D^-_{i,i+1+k}) \\
 & = &  \mathcal{R}^L(G_i^{t,+}(x_{i-k}), \dots, G_i^{t,+}(x_{i+k}) ) + \mathcal{R}^R(G^{t,-}_i(x_{i+1-k}), \dots, G^{t,-}_i(x_{i+1+k})) = \hat G^t_{i, i+1/2},\\
\widehat{D}^+_{i - 1/2} & = &  \mathcal{R}^L(D^+_{i-1-k,i}, \dots, D^+_{i-1 +k,i} ) + \mathcal{R}^R(D^-_{i-k,i}, \dots, D^-_{i+k,i}) \\
& = &  - \mathcal{R}^L(G^{t,+}_i(x_{i-1-k}), \dots, G^{t,+}_i(x_{i-1 +k} )) -\mathcal{R}^R(G^{t,-}_i(x_{i-k}), \dots,  G^{t,-}_i(x_{i+k})) = - \hat G_{i, i-1/2},
\end{eqnarray*}
where $G_i^{t,\pm}$ represents the splitting of the function $G_i^t$ and $\hat G^t_{i, i\pm1/2}$ is its WENO reconstruction. Therefore we have:
\begin{eqnarray*}
 \frac{1}{\Delta x} (\widehat{D}_{i-1/2}^+ + \widehat{D}_{i+1/2}^- ) &  =  & \frac{1}{\Delta x} (\hat G_{i,i+1/2} - \hat G_{i,i-1/2} ) \\
& = & \partial_x G^t_i(x_i) + O(\Delta x^{2k+1}) \\
& =  &A(U(x_i, t)) U_x(x_i, t) + O(\Delta x^{2k+1}),
\end{eqnarray*}
which leads to \eqref{eq:order}.
 \end{proof}

 The symmetry condition \eqref{eq:path_cons} is satisfied by the family of straight segments
 $$\Psi(s; U,V) = U + s(V-U).$$
 In effect
 \begin{eqnarray*}
 \int_0^1 A(\Psi(s; V, U ) )\partial_s \Psi(s; V, U)\, ds & = &
 \left(\int_0^1 A(V + s(U-V))\, ds \right) (U - V) \\
 & = & - \left( \int_0^1 A(U + s(V-U))\, ds \right) (V - U) \\
& =  &  - \int_0^1 A(\Psi(s; U, V ) ) \partial_s \Psi(s; U, V)\, ds.
\end{eqnarray*}
Therefore, in the particular case of system \eqref{CBs}, the definition \eqref{fluc1}, based on the choice of straight segments, leads to a high-order method. On the other hand, \eqref{eq:path_cons} is not satisfied for \eqref{psi2} and, it will be seen in  Section \ref{num:bug} that the method corresponding to \eqref{fluc2} is only first-order accurate, what shows that this condition is necessary as well.

 According to the proof of Proposition \ref{prop:order}, the numerical method can be interpreted as follows: the PDE system is first formally rewritten as the system of balance laws
 \begin{equation}\label{eq:scl_lf}
\partial_t{U_i}+ \partial_x\mathcal{F}^L_{i} = 0,
\end{equation}
with
$$
 \mathcal{F}^L_{i}(x,t) = G_i^t(x),
 $$
where $G_i$ is the function given by \eqref{eq:G_i}; then WENO reconstructions are applied to the generalized flux function $\mathcal{F}^L_i$. In the particular case of a system of the form \eqref{eq:general_noncon} it can be easily checked that this is equivalent to reconstructing the generalized flux function
$$
\mathcal{F}^L = F + \mathcal{B}^L_i,$$
where
$$
\mathcal{B}^L_i(x,t) = \int_0^1 B(\Psi(s; U(x_i,t), U(x,t) ) \partial_s \Psi(s; U(x_i,t), U(x,t))\, ds,$$
which is defined for every $i$, while in the global-flux approach 
the generalized flux function to be reconstructed is
$$
\mathcal{F}^G = F + \mathcal{B}^G,$$
where
$$
\mathcal{B}^G(x,t) = \int_{a}^x B(U)U_x \, dx$$
is globally defined.
This is why it was said above that a \textit{local flux} approach is followed here. Following this approach,  $\mathcal{B}^L_i$ is approximated at the node points of the stencil as follows
$$
\mathcal{B}^L_i(x_j,t) \approx B_{\Psi}(U_i, U_j) (U_j - U_i), \quad j \in \mathcal{S}_i, $$
where $B_\Psi$ is the linearization of $B$ used in the Roe matrix, While in the case of the global flux approach (similar to the approach in the finite volume method as \cite{CAO2023111790}), $\mathcal{B}^G$ is numerically approximated at the nodes using a recursive formula such that
$$
\mathcal{B}^G_0 = 0; \quad \mathcal{B}^G_{i+1}
= \mathcal{B}^G_{i} + \Delta x \sum_{l=0}^M \alpha_l B(U_i^l) D_xU_i^l,
\quad i =0,\dots, NP-1,
$$
where $\alpha^l$, $l = 0, \dots, M$ are the weights of the selected quadrature form and $U_i^l$, $D_xU_i^l$, $l = 0, \dots, M$ are high-order approximations of $U$ and $U_x$ at the quadrature points, $NP$ is the total number of discrete points. Therefore $M+1$ additional reconstructions (of state in this case) are necessary. Summing up, while in the local flux approach two flux WENO reconstructions per point are needed (to compute $\widehat{D}_{i-1/2}^+$ and $\widehat{D}_{i+1/2}^-$), in the global flux approach one flux WENO reconstruction per intercell and  $M+1$ state reconstructions per cell are necessary. It means that the local flux approach requires $2NP$ flux reconstructions per stage of the ODE solver used for the temporal discretization while the global flux approach requires $(M+2)NP$. On the other hand, if the order of the WENO reconstruction is $2k + 1$, then $M$ has to be greater or equal than $k$ so that the numerical method preserves the order of the WENO reconstructions. Therefore, the number of reconstructions in the global flux approach is greater than $(k+2)NP$ compared to the $2NP$ reconstructions in the local flux approach.

\section{Problems with source terms and well-balanced property}
\label{sec:well-balance}
\subsection{Well-balanced property}
Let us first consider a system of the form \eqref{eq:noncon} in which $\lambda = 0$ is an eigenvalue of $A(U)$ for every $U \in \Omega$. The well-balanced property of the methods is related to the preservation of the stationary solutions $U^*$ of the system, which satisfy the equation
$$
A(U^*)U^*_x = 0.$$
Observe that, $U^*_x$ is an eigenvector associated with the null eigenvalue for all $x$ such that $U^*(x)_x \not= 0$.
As an example, it can be easily checked that the  stationary solutions of \eqref{CBs} are the set of functions
\begin{equation}\label{eq:ssCBs}
U^*(x) = \left[ \begin{array}{c} u^*(x) \\ v^*(x) \end{array}\right]
\text{ s.t. } u^*(x) + v^*(x) = constant.
\end{equation}
 The method described in Section \ref{ss:nummeth_noncon} has then the well-balanced property given by the following results the proof of which is trivial:
\begin{proposition}\label{prop:wb_St}
Let $U^*(x)$ be a stationary solution of system \eqref{eq:noncon}. If, for every $x_L < x_R$ one has
\begin{equation}\label{eq:Apsiwb}
A_\Psi(U^*(x_L), U^*(x_R)) (U^*(x_R) - U^*(x_L)) = 0
\end{equation}
and the selected matrix-splitting is such that
\begin{equation}\label{eq:spl_wb}
A_\Psi(U,V) (V- U) = 0 \implies A^\pm_\Psi(U,V) (V- U) = 0, 
\end{equation}
then the numerical method  \eqref{eq:numerical_sch}-\eqref{eq:D-}-\eqref{eq:D+}-\eqref{eq:Djk_Roe} is well-balanced for $U^*$, i.e. $\{ U^* (x_i) \}$ is an equilibrium of the ODE system \eqref{eq:numerical_sch}.
\end{proposition}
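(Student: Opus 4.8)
The plan is to substitute the candidate equilibrium $U_i = U^*(x_i)$ into the semidiscrete scheme \eqref{eq:numerical_sch} and verify that the right-hand side vanishes node by node, i.e. that $\widehat{D}^-_{i+1/2} = \widehat{D}^+_{i-1/2} = 0$ for every $i$. The starting observation is purely structural: in \eqref{eq:D-} the only fluctuations that occur are $D^\pm_{i,j}$ with $j$ a node of the stencil centred near $x_{i+1/2}$, and in \eqref{eq:D+} only fluctuations $D^\pm_{j,i}$ occur; by \eqref{eq:Djk_Roe} each of these is of the form $A^\pm_\Psi(U^*(x_L),U^*(x_R))\big(U^*(x_R)-U^*(x_L)\big)$ for a pair of stencil nodes $x_L,x_R$, but the two groups of terms use the two opposite orderings of the arguments.

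Next I would show that all these fluctuations are zero. Applying the Roe identity \eqref{eq:integral_prop} and the symmetry property \eqref{eq:path_cons} (which holds in the applications, e.g. for the family of straight segments) shows that $A_\Psi(U,V)(V-U)$ is antisymmetric under the exchange of $U$ and $V$; hence hypothesis \eqref{eq:Apsiwb}, stated for $x_L<x_R$, in fact yields $A_\Psi(U^*(x_L),U^*(x_R))\big(U^*(x_R)-U^*(x_L)\big)=0$ for \emph{both} orderings of any two stencil nodes. The splitting hypothesis \eqref{eq:spl_wb} then upgrades this to $A^\pm_\Psi(U^*(x_L),U^*(x_R))\big(U^*(x_R)-U^*(x_L)\big)=0$, i.e. $D^\pm_{j,k}=0$ for every pair $(j,k)$ appearing in \eqref{eq:D-}--\eqref{eq:D+}.

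Finally, since the WENO operators $\mathcal{R}^L$ and $\mathcal{R}^R$ are consistent and exactly reproduce constant states, in particular $\mathcal{R}^L(0,\dots,0)=\mathcal{R}^R(0,\dots,0)=0$, feeding them the vanishing arguments gives $\widehat{D}^-_{i+1/2}=\widehat{D}^+_{i-1/2}=0$; the characteristic-variable formulation \eqref{eq:D-char}--\eqref{eq:D+char} is unaffected because the matrices $L_{i,i+1}$, $R_{i,i+1}$ merely multiply zero vectors. Plugging into \eqref{eq:numerical_sch} gives $dU_i/dt=0$ for all $i$, which is the assertion. There is no genuinely hard step here — this is why the statement is labelled trivial — but the two points that deserve a line of care are the passage from the one-sided hypothesis \eqref{eq:Apsiwb} to its two-sided version (where antisymmetry of the Roe fluctuation, and hence implicitly \eqref{eq:path_cons}, is used) and the tacit fact that the nonlinear WENO weights are normalised so that zero data is reconstructed as zero.
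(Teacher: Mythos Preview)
Your proof is correct and is precisely what the paper has in mind: the paper itself labels the result as one ``the proof of which is trivial'' and gives no further argument. Your careful handling of the ordering subtlety in hypothesis \eqref{eq:Apsiwb} --- invoking \eqref{eq:path_cons} together with the Roe identity \eqref{eq:integral_prop} to obtain the two-sided version --- actually goes beyond what the paper spells out, since the paper leaves that point implicit.
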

Observe that  \eqref{eq:spl_wb} is satisfied for the Upwind splitting approach, as can be easily deduced from \eqref{eq:PpmA}, but not for the LF splitting: in effect, in this case one has
$$
A_\Psi(U,V) (V- U) = 0 \implies A^\pm_\Psi(U,V) (V- U) = \pm \frac{\alpha}{2} (V - U). $$
Nevertheless, the modification of the identity matrix technique introduced in \cite{Castro2010Mc} can be applied to modify the splitting so that \eqref{eq:Apsiwb} is satisfied.

As an application of Proposition \ref{prop:wb_St}, it can be easily checked that the property \eqref{eq:Apsiwb} is satisfied for the Roe matrix $A_{\Psi_1}$ defined in \eqref{eq:CBs_Roe} for every stationary solution \eqref{eq:ssCBs} of System \eqref{CBs}. Therefore, the choices of the family of straight segments and the Upwind splitting lead to a numerical method for \eqref{CBs} that is fully well-balanced (and high-order accurate). On the other hand, \eqref{eq:Apsiwb} is not satisfied for the Roe matrix $A_{\Psi_2}$.

Property \eqref{eq:Apsiwb} is discussed in \cite{M2AN_2004} in relation with the well-balanced property of Roe methods. Let us only recall that, given a stationary solution $U^*$, this property is satisfied if the family of paths is such that, for all $x_L, x_R \in \mathbb{R}$ with $x_L < x_R $, the functions
$$
s \in [0,1] \to \Psi \left(s; U^*(x_L), U^*(x_R) \right) \in \Omega
$$
and
$$
x \in [x_L, x_R] \to U^*(x) \in \Omega$$
define the same curve in $\Omega$.
In effect, if this is the case one has:
\begin{eqnarray*}
A_\Psi(U,V) (V- U) & = & \int_0^1 A(\Psi(s; U, V) ) \partial_s \Psi(s; U, V)\, ds \\
& = & \int_{x_L}^{x_R} A(U^*(x )) U^*_x(x) \,dx   \\
& = & 0,
\end{eqnarray*}
where a change of parameter has been applied to obtain the second equality and the fact that $U^*$ is a stationary solution has been used in the third one. In particular, if the family of straight segments is chosen, the property \eqref{eq:Apsiwb} is satisfied for all stationary solutions such that the curve defined by $x \to U^*(x)$ lies in a straight line: this is the case of \eqref{CBs} whose stationary solutions \eqref{eq:ssCBs} lie in a straight line of equation $u + v = constant$ in the $u,v$ plane.

\subsection{Source terms}
Let us consider now problems with source term
\begin{equation}
\label{eq:noncon+st}
U_t + A(U)U_x =S(U)H_x,
\end{equation}
where $H(x)$ is a known function, whose stationary solutions satisfy
\begin{equation}
A(U^*)U^*_x =S(U^*)H_x.
\end{equation}
\subsubsection{Strategy 1 }
\label{ss:wb_s1}
The first strategy consists in writing \eqref{eq:noncon+st} in the form \eqref{eq:noncon} as follows (see \cite{Castro2009,  M2AN_2004}):
\begin{equation}\label{eq:nonconsform}
W_t + \mathcal{A}(W)W_x = 0,
\end{equation}
with
$$
W = \left[ \begin{array}{c} U \\ H  \end{array}\right] \in \Omega \times \mathbb{R}, \quad \mathcal{A}(W) = \left[ \begin{array}{c|c} A(U) & -S(U) \\\hline 0 & 0 \end{array}\right],$$
and then the  strategy described in Section \ref{ss:nummeth_noncon} is applied to \eqref{eq:nonconsform}. To do this, a family of paths
$$
\widetilde{\Psi}(s; W_L, W_R) = \left[ \begin{array}{c} \Psi_U (s; W_L, W_R) \\ \Psi_H (s; W_L, W_R) \end{array} \right]
$$
satisfying \eqref{eq:path_cons} (as, for instance, the family of straight segments) and a Roe linearization have to be chosen first. As in \cite{M2AN_2004}, let us assume that a Roe matrix of the form
$$
 \mathcal{A}_{\widetilde{\Psi}}(W_L, W_R) = \left[ \begin{array}{c|c} A_{\widetilde \Psi}(W_L,W_R) & -S_{\widetilde{\Psi}}(W_L,W_R) \\\hline 0 & 0 \end{array}\right]
 $$
is available, where
\begin{itemize}
    \item $A_{\widetilde{\Psi}}(W_L, W_R)$ has $N$ real different eigenvalues $\lambda_i(W_L,W_R)$, $i = 1, \dots, N$;
    \item $A_{\widetilde{\Psi}}(W, W) = A(W)$ for all $W = [U, H]^T$;
    \item $S_{\widetilde{\Psi}}(W, W) = S(W)$ for all $W = [U, H]^T$; 
    \item for all $W_L, W_R \in \Omega \times \mathbb{R}$
    \begin{eqnarray*}
& & A_{\widetilde{\Psi}}(W_L, W_R) (U_R - U_L) = \int_0^1 A(\Psi_U(s; W_L, W_R) ) \partial_s \Psi_U(s; W_L, W_R)\, ds ;\\
& & S_{\widetilde{\Psi}}(W_L, W_R) (H_R - H_L)  =
 \int_0^1 S(\Psi_U(s; W_L, W_R) ) \partial_s \Psi_H(s; W_L, W_r)\, ds.\end{eqnarray*}
\end{itemize}
In this paragraph, the Upwind splitting is considered. Some algebraic computations show that the corresponding splitting is given by the matrices
$$
 \mathcal{A}^\pm_{\widetilde{\Psi}}(W_L, W_R) = \left[ \begin{array}{c|c} P^\pm_{\widetilde{\Psi}}(W_L,W_R)A_{\widetilde \Psi}(W_L,W_R) & -P^\pm_{\widetilde{\Psi}}(W_L,W_R)S_{\widetilde{\Psi}}(W_L,W_R) \\\hline 0 & 0 \end{array}\right]
 $$
 where $P^\pm_{\widetilde{\Psi}}$ are the projection matrices defined as in \eqref{eq:Ppm}.
 
If the trivial equation for the artificial unknown $H$ is removed, the numerical method can be written again as  \eqref{eq:numerical_sch}-\eqref{eq:D-}-\eqref{eq:D+} where now
\begin{equation}\label{eq:Djk_Roe_St} 
D^\pm_{j,k} = P_{j,k}^\pm \left( A_{j,k} (U_k - U_j) - S_{j,k} (H(x_k) - H(x_j)) \right),
\end{equation}
with
$$
 P_{j,k}^\pm  = P^\pm_{\widetilde{\Psi}}(W_j, W_k), \quad A_{j,k}  =A_{\widetilde{\Psi}}(W_j, W_k), \quad S_{j,k}  = S_{\widetilde{\Psi}}(W_j, W_k). $$
 Accordingly, the fluctuations can be computed as follows:
given two indices $j,k$, first the coordinates $\{ \alpha_{j,k;l} \}_{l=1}^N$ of $U_k - U_j - A_{j,k}^{-1} S_{j,k} (H(x_k) - H(x_j))$ in the basis of eigenvectors of the Roe matrix $A_{j,k}$ are computed:
\begin{equation}\label{eq:sist_alpha}
U_k - U_j - A_{j,k}^{-1} S_{j,k} (H(x_k) - H(x_j)) = \sum_{l=1}^N \alpha_{j,k;l}\vec r_{j,k; l}.
\end{equation}
Then, one has:
$$
A_{j,k}(U_k - U_j) - S_{j,k} (H(x_k) - H(x_j)) = \sum_{i=1}^N  \alpha_{j,k;l}\lambda_{j,k;l}\vec r_{j,k; l},$$
and then
$$
D_{j,k}^\pm =  \sum_{i=1}^N  \alpha_{j,k;l}\lambda^\pm_{j,k;l}\vec r_{j,k; l}.$$
The reconstruction is then performed as in the case of homogeneous problems. 

Proposition \ref{prop:wb_St} can be then applied to this particular case to show that, given $H(x)$,  the numerical method is well-balanced for a stationary solution $U^*$ provided that \eqref{eq:Apsiwb} holds, i.e. if
\begin{equation}
A_{\widetilde\Psi}(U^*(x_L), U^*(x_R)) (U^*(x_R) - U^*(x_L)) = 
S_{\widetilde\Psi}(U^*(x_L), U^*(x_R)) (H(x_R) - H(x_L)) 
\end{equation}
for all  $x_L < x_R$. This is the case if
\begin{equation}\label{eq:widetildepsi}
s \in [0,1] \to \widetilde \Psi \left(s; \left[ \begin{array}{c} U^*(x_L) \\H(x_L) \end{array} \right]; \left[ \begin{array}{c} U^*(x_R) \\H(x_R) \end{array} \right] \right)
\end{equation}
and
\begin{equation}\label{eq:UstarHstar}
x \in [x_L, x_R] \to \left[\begin{array}{c} U^*(x) \\ H(x) \end{array}\right]
\end{equation}
define the same curve for all $x_L < x_R$. In particular, if the family of straight segments is chosen, then the numerical method is well-balanced for every stationary solution such that \eqref{eq:UstarHstar} lies in a straight line for every $x_L < x_R$. This property will be used in Section \ref{Sec:two_swe} to define numerical methods that preserve water-at-rest solutions for the two-layer shallow-water system. More sophisticated families of paths could be considered to preserve more general stationary solutions, as the ones based on the Generalized Hydrostatic Reconstruction introduced in \cite{Castro20072055} which will be done in a forthcoming paper.

The numerical treatment of the source term in this strategy can be interpreted as it was done in Section \ref{ss:nummeth_noncon} for the nonconservative products $B(U)U_x$: the source term is first written as the derivative of a new flux function
$$
S(U)H_x = \partial_x \mathcal{S}^L_i$$
with
$$
\mathcal{S}^L_i(x,t) = \int_0^1 S(\Psi_U(s; W(x_i,t), W(x,t) ) \partial_s \Psi_H(s; W(x_i,t), W(x,t))\, ds,$$
while in the global-flux approach, it is rewritten as
$$
S(U)H_x = \partial_x \mathcal{S}^G$$
with
$$
\mathcal{S}^G(x,t) = \int_{x_0}^x S(U)H_x \, dx.$$
Again,  $\mathcal{S}^L_i$ is approximated at the node points of the stencil as follows
$$
\mathcal{S}^L_i(x_j,t) \approx S_{\widetilde{\Psi}}(W_i, W_j) (H(x_j) - H(x_i)), \quad j \in \mathcal{S}_i, $$
and no integrals at the cells have to be approximated thus avoiding the need to calculate new  reconstructions at the quadrature points.


\subsubsection{Strategy 2 }
\label{ss:wb_s2}
Strategy 2 extends to the nonconservative system the technique proposed in \cite{PARES2021109880} for systems of balance laws. Unlike Strategy 1 the well-balanced property of the methods based on this strategy will not depend on either the choice of the family of paths or the matrix splitting. 

Let us consider first the numerical method for \eqref{eq:noncon+st} given by 
\begin{equation}
\frac{dU_i}{dt} + \frac{1}{\Delta x} \left( \widehat{D}^-_{i + 1/2} +  \widehat{D}^+_{i - 1/2}\right) = S(U_i)H_x(x_i),
\label{eq:numerical_sch_nowb}
\end{equation}
where the fluctuations $\widehat{D}^+_{i - 1/2}$ are defined by 
\eqref{eq:Djk_Roe} with any choice of family of paths, Roe matrix and matrix-splitting. Under the hypothesis of Proposition \ref{prop:wb_St}, this method is highly accurate but in principle does not preserve any stationary solution.  Let us modify the method so that a
given $m$-parameter family  of stationary solutions
\begin{equation}\label{eq:mparss}
U^*(x; c_1, \dots, c_m),
\end{equation}
with $m \leq N$, is preserved. To do this, let us assume that there exists a $m \times N$ matrix $C$ with rank $m$ such that, given any point $\bar x$ and any state $\bar U$, there exists a unique stationary solution of the family satisfying
\begin{equation}\label{eq:CUstar=CU}
C U^*(\bar x)  =  C \bar U,
\end{equation}
i.e., this system of equations determines the value of the $m$ parameters.
The idea is then to rewrite the equation at $x_i$ equivalently as follows:
$$
\frac{dU_i}{dt} + A(U)U_x - A(U^*_i)U^*_{i,x} = (S(U(x_i,t))-S(U^*_i(x_i,t)))H_x(x_i),$$
where $U^*_i$ is the unique stationary solution of the $m$-parameter family satisfying
\begin{equation}\label{eq:wbst2}
C U_i^*(x_i) =  C U(x_i,t).
\end{equation}
The idea is then to discretize $A(U)U_x$ and $A(U^*_i)U^*_{i,x}$ together by applying the strategy introduced in Section \ref{ss:nummeth_noncon} what leads to the numerical method:
\begin{equation}
\frac{dU_i}{dt} + \frac{1}{\Delta x} \left( \widehat{D}^-_{i + 1/2} +  \widehat{D}^+_{i - 1/2}\right) = (S(U_i) - S(U^*_i(x_i)))H_x(x_i),
\label{eq:numerical_sch_st2}
\end{equation}
with
\begin{eqnarray}
\widehat{D}^-_{i + 1/2} & = &  \mathcal{R}^L(D^+_{i,i-k} - D^{*,+}_{i,i-k} , \dots, D^+_{i,i+k} - D^{*,+}_{i,i+k}) \nonumber \\
& & + \mathcal{R}^R(D^-_{i, i+1-k} - D^{*,-}_{i, i+1-k}, \dots, D^-_{i,i+1+k} - D^{*,-}_{i,i+1+k}) , \label{eq:D-wb}\\
\widehat{D}^+_{i - 1/2} & = &  \mathcal{R}^L(D^+_{i-1-k,i} - D^{*,+}_{i-1-k,i}, \dots, D^+_{i-1 +k,i} - D^{*,+}_{i-1 +k,i} ) \nonumber\\
& & + \mathcal{R}^R(D^-_{i-k,i} - D^{*,-}_{i-k,i} , \dots, D^-_{i+k,i} - D^{*,-}_{i+k,i}) , \label{eq:D+wb}
\end{eqnarray}
where the starred fluctuations are given by
\begin{equation}\label{eq:D*jkRpe}
D^{*,\pm}_{j,k} = A^\pm_\Psi (U_i^*(x_j), U_i^*(x_k) ) ( U^*_i(x_k) - U_i^*(x_j)),
\end{equation}
The following result then holds.
\begin{proposition} The numerical method \eqref{eq:numerical_sch_st2}-\eqref{eq:D-wb}-\eqref{eq:D+wb} is well-balanced for all the stationary solutions of the family \eqref{eq:mparss}, i.e. $\{ U^* (x_i;c_1, \dots, c_m) \}$ is an equilibrium of the ODE system \eqref{eq:numerical_sch} for every stationary solution $U^*$.
\end{proposition}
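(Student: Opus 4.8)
The plan is to verify the equilibrium condition of the ODE system directly, by substituting a fixed member of the family into the modified scheme and checking that every term on the right-hand side of \eqref{eq:numerical_sch_st2} cancels. So I would fix arbitrary parameters $c_1,\dots,c_m$, write $U^*(\cdot)=U^*(\cdot;c_1,\dots,c_m)$ for the associated stationary solution, set $U_i=U^*(x_i)$ for every $i$, and aim to show $dU_i/dt=0$ at each node.

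The first step is to identify the local stationary solution $U^*_i$ that enters the scheme at node $i$: it is by definition the unique element of the family \eqref{eq:mparss} satisfying \eqref{eq:wbst2}, i.e. $CU^*_i(x_i)=CU(x_i,t)=CU^*(x_i)$. By the standing hypothesis that $C$ has rank $m$ and that \eqref{eq:CUstar=CU} determines the parameters uniquely, this forces $U^*_i\equiv U^*$. Hence $U^*_i(x_j)=U^*(x_j)=U_j$ for every $j\in\mathcal S_i$, and so the starred fluctuations \eqref{eq:D*jkRpe} agree exactly with the unstarred ones, $D^{*,\pm}_{j,k}=A^\pm_\Psi(U_j,U_k)(U_k-U_j)=D^\pm_{j,k}$, for all index pairs appearing in \eqref{eq:D-wb}--\eqref{eq:D+wb}. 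Consequently every argument passed to $\mathcal R^L$ and $\mathcal R^R$ in those formulas is the zero vector, and since any WENO reconstruction reproduces constants — in particular $\mathcal R^L(0,\dots,0)=\mathcal R^R(0,\dots,0)=0$ — we get $\widehat D^-_{i+1/2}=0$ and $\widehat D^+_{i-1/2}=0$. For the source term, $U^*_i(x_i)=U^*(x_i)=U_i$ gives $(S(U_i)-S(U^*_i(x_i)))H_x(x_i)=0$. Therefore $dU_i/dt=0$ at every node, which is precisely the assertion that $\{U^*(x_i;c_1,\dots,c_m)\}$ is an equilibrium.

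I expect essentially no genuine obstacle here: the only points needing care are the well-posedness of the map $\bar U\mapsto U^*_i$, which is exactly the standing assumption on $C$, and the elementary identity $\mathcal R(0,\dots,0)=0$. It is worth underlining, as the statement advertises, that neither the symmetry property \eqref{eq:path_cons} nor any specific choice of family of paths, Roe matrix, or matrix-splitting is used — these matter only for the accuracy analysis of Proposition \ref{prop:order}, not for well-balancedness, because the cancellation $D^\pm_{j,k}-D^{*,\pm}_{j,k}=0$ takes place argument-by-argument \emph{before} the reconstruction operator is applied.
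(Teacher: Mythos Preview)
Your argument is correct and is precisely the straightforward verification the paper has in mind (the paper itself omits the details, stating only that the proof is straightforward). The key observation---that the uniqueness assumption on \eqref{eq:CUstar=CU} forces $U^*_i\equiv U^*$, whence the starred and unstarred fluctuations coincide and the reconstruction inputs are all zero---is exactly right, and your remark that this holds independently of the path, Roe matrix, or splitting is the point the paper emphasizes as the advantage of Strategy~2 over Strategy~1.
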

The proof is straightforward. As an application, let us consider the family of stationary solutions
$$U^*(x;c) = \left[ \begin{array}{c}u^*(x; c) \\
v^*(x;c) \end{array} \right]  = \left[ \begin{array}{c}\frac{c}{2} + \sin(x) \\
\frac{c}{2} - \sin(x) \end{array} \right], \quad c \in \mathbb{R}$$
of System \eqref{CBs}. Given $\bar x$ and $\bar U = [\bar u, \bar v]^T$ the equation
$$
u^*(x;c) + v^*(x;c)  = \bar u + \bar v
$$
determines the value of the parameter
$$ c = \bar u + \bar v,$$
i.e. $m = 1$ and $C = [1 ,1]$ in this case.
Therefore, if the family of straight segments and the Roe matrix $A_{\Psi_1}$ in \eqref{eq:CBs_Roe} are chosen, the numerical method \eqref{eq:numerical_sch_st2} with
$$
U^*_i(x) = \left[ \begin{array}{c}\frac{u_i + v_i}{2} + \sin(x) \\
\frac{u_i + v_i}{2} - \sin(x) \end{array} \right]$$
is high-order accurate and well-balanced  for the given family of stationary solution. This technique will be used in next section to design again a numerical method that preserves water-at-rest solutions for the two-layer shallow-water system that is based on the LF splitting.

 For some particular problems, this strategy can be extended to design fully well-balanced methods, i.e. methods that preserve all the stationary solutions. In effect, let us suppose that, given $\bar x$ and $\bar U$, there is only one stationary solution $U^*(x)$ such that
 $$
 U^*(\bar x) = \bar U$$
(i.e. $m =N$ and $C$ is the identity matrix) or, there are several but it is possible to use a criterion to select one of them (like the flow regime, for instance). Then, the strategy can be applied by taking $U^*_i$ as the unique or the selected stationary solution such that
 $$
 U^*_i(x_i) = U(x_i, t).$$
 In this case, the numerical method reduces to \eqref{eq:numerical_sch}, with the fluctuations given by \eqref{eq:D-wb}-\eqref{eq:D+wb}.
 
 In particular,  this technique allows the design of fully well-balanced numerical methods for the shallow-water system: see \cite{PARES2021109880}. In fact, the numerical methods introduced here based on Strategy 2 reduce to the ones in this reference when they are applied to systems of balance laws, as the shallow-water system. In the reference, it has been shown that theses methods deal correctly with discontinuous bottom functions $H$: the fully-well balanced methods are able to correctly capture the stationary contact discontinuities standing on the points of discontinuity of $H$: see \cite{PARES2021109880} for details. This technique can be extended to derive numerical methods that are fully well-balanced for the 1D two-layer shallow-water systems but the computation of moving stationary solution is more involved: this will be the object of future work.
\subsection{Implementation}\label{subsec:Imple}
We summarize here the implementation of two well-balanced methods: Method 1 in which Strategy 1 is combined with the Upwind splitting, and Method 2 corresponding to Strategy  2, LF splitting, and reconstruction in characteristic variables. Let us assume that approximations $U_j$ to the solution are available  at the cell points; 
the fluctuations and the source terms at a point $x_i$ are then computed as follows;
\begin{enumerate}
    \item Compute the Roe matrices $A_{i,j} = A_{\Psi}(U_i, U_j)$, $j = i-k, \dots, i+k$ and their eigenvalues  $\{ \lambda_{i,j;l} \}_{l=1}^N$.
    \item Compute the fluctuations at the stencil points as follows:
    \begin{enumerate}
           \item  Method 1:
        \\
       Compute the eigenvector matrices $R_{i,j}$, $j = i-k, \dots, i+k$ .\\
    Solve the linear systems
    $$R_{i,j}\vec \alpha = U_j - U_i - A_{i,j}^{-1} S_{i,j} (H(x_j) - H(x_i))$$
    to obtain $\{ \alpha_{i,j;l} \}_{l=1}^N$ such that
    $$U_j - U_i - A_{i,j}^{-1} S_{i,j} (H(x_j) - H(x_i)) = \sum_{l=1}^N \alpha_{i,j;l}\vec r_{i,j; l},$$
       Compute then $$D_{i,j}^\pm =  \sum_{i=1}^N  \alpha_{i,j;l}\lambda^\pm_{i,j;l}\vec r_{i,j; l}.$$
        \item Method 2:\\ 
        Compute first the  stationary solution $U^*_i$ of the family
        \eqref{eq:mparss} satisfying 
        $$CU^*_i(x_i) = C U_i$$
        and evaluate it at $x_j$, $j = i-k, \dots,i+k$.\\
        Compute the Roe matrices $A^*_{i,j} = A_{\Psi}(U_i^*(x_i), 
        U^*_i(x_j))$.\\
        Compute then  
 \begin{equation*}
 \begin{aligned}
D^\pm_{i,j} &= \dfrac{1}{2}\left(A_{i,j}\left(U_j - U_i\right)\pm \alpha \left(U_j - U_i\right)\right),\\
 D^{*,\pm}_{i,j} &= \dfrac{1}{2}\left(A^*_{i,j}\left(U_i^*(x_j)  - U_i^*(x_i) \right)\pm \alpha \left(U_i^*(x_j)  - U_i^*(x_i)\right)\right),
 \end{aligned}
 \end{equation*}
 where $\alpha$ is chosen so that
 $\alpha > |\lambda_{i,j;l}|$ for all $i,j,l$.
     \end{enumerate}
    \item Compute the WENO reconstructions of the fluctuations at the cell interfaces as follows:
    \begin{enumerate}
    \item  Method 1:
\begin{eqnarray*}
\widehat{D}^-_{i + 1/2} & = &  \mathcal{R}^L(D^+_{i,i-k}, \dots, D^+_{i,i+k} ) + \mathcal{R}^R(D^-_{i, i+1-k}, \dots, D^-_{i,i+1+k}) , \\
\widehat{D}^+_{i - 1/2} & = &  \mathcal{R}^L(D^+_{i-1-k,i}, \dots, D^+_{i-1 +k,i} ) + \mathcal{R}^R(D^-_{i-k,i}, \dots, D^-_{i+k,i}).
\end{eqnarray*}
        \item Method 2:\\
        Compute the matrices of right and left eigenvectors, $R_{i-1,i}$, $L_{i-1,i}$, (resp. $R_{i,i+1}$, $L_{i,i+1}$)
        of $A_{i-1,i}$ (resp. $A_{i,i+1}$)\\
        Then compute:
    \begin{eqnarray*}
\widehat{D}^-_{i + 1/2} & = & R_{i,i+1} \mathcal{R}^L(L_{i,i+1}\left(D^+_{i,i-k} - D^{*,+}_{i,i-k}\right) , \dots,L_{i,i+1} \left(D^+_{i,i+k} - D^{*,+}_{i,i+k}\right)) \nonumber \\
& & + R_{i,i+1}\mathcal{R}^R(L_{i,i+1}\left(D^-_{i, i+1-k} - D^{*,-}_{i, i+1-k}\right), \dots, L_{i,i+1}\left(D^-_{i,i+1+k} - D^{*,-}_{i,i+1+k}\right)) , \\
\widehat{D}^+_{i - 1/2} & = &  R_{i-1,i}\mathcal{R}^L(L_{i-1,i}\left(D^+_{i-1-k,i} - D^{*,+}_{i-1-k,i}\right), \dots, L_{i-1,i}\left(D^+_{i-1 +k,i} - D^{*,+}_{i-1 +k,i}\right) ) \nonumber\\
& & + R_{i-1,i}\mathcal{R}^R(L_{i-1,i}\left(D^-_{i-k,i} - D^{*,-}_{i-k,i}\right) , \dots, L_{i-1,i}\left(D^-_{i+k,i} - D^{*,-}_{i+k,i}\right)). 
\end{eqnarray*}\\
Compute finally the source term:
$$
\left(S(U_i) - S(U^*_i(x_i))\right)H_x(x_i).$$
\end{enumerate}
\end{enumerate}
\begin{remark}
Although in Section \ref{ss:nummeth_noncon} it was said that the numerical methods based on the Upwind splitting are more computationally expensive than those based on the LF splitting, observe that the numerical treatment of the source term in Method 2 requires the computation of the fluctuations corresponding to the chosen stationary solutions so that, as it will be seen in Section \ref{sec:numerical_sol}, the computational costs of both methods are comparable.
\end{remark}

\section{Extension to 2D systems}
\label{sec:nuemrical_2D}
\subsection{Homogeneous problems}
\label{sec:nuemrical_2D_general}
This section extends the 1D path-conservative fifth-order WENO scheme to 2D nonconservative systems of the form
\begin{equation}\label{eq:2dnoncons}
	U_t + A_1(U) U_x + A_2 (U) U_y = 0.
 \end{equation}
 The system is supposed again to be strictly hyperbolic, i.e. for all $U$ and all $\theta \in [0, 2\pi)$, the matrix
 $$
 \cos(\theta) A_1 (U) + \sin(\theta) A_2(U)$$
 has $N$ different real eigenvalues. Systems with flux terms and nonconservative products 
\begin{equation}
\label{eq:general_cf2d}
U_t + F(U)_x + G(U)_y + C(U)U_x + D(U)U_y = 0,
\end{equation}
can be considered as particular cases in which
$A_1(U)=J(F(U)) +C(U), A_2(U)=J(G(U)) +D(U).$

Let us assume again that Roe linearizations $A_{i, \Psi}(U,V)$ of $A_i(U)$, $i = 1,2$ are available for the selected family of paths
$\Psi$. We consider uniform Cartesian meshes with points $(x_i, y_j)$ with steps $\Delta x$ and $\Delta y$ in the $x$ and $y$ direction.
WENO methods can be extended dimension by dimension:
\begin{equation}
 U^{'}_{i,j} = -\frac{1}{\Delta x} \left( \widehat{D}^-_{i + 1/2,j} + \widehat{D}^+_{i - 1/2,j}\right)-\frac{1}{\Delta y} \left( \widehat{D}^-_{i ,j+ 1/2} + \widehat{D}^+_{i,j - 1/2}\right),\label{eq:semi_2d}   
\end{equation}
where $U_{i,j} \approx U(x_i, y_j)$ and
\begin{eqnarray}
\widehat{D}^-_{i + 1/2,j} & = &  \mathcal{R}^L(D^+_{i,i-k;j}, \dots, D^+_{i,i+k;j} ) + \mathcal{R}^R(D^-_{i, i+1-k;j}, \dots, D^-_{i,i+1+k;j}),\\
\widehat{D}^+_{i - 1/2,j} & = &  \mathcal{R}^L(D^+_{i-1-k,i;j}, \dots, D^+_{i-1 +k,i;j} ) + \mathcal{R}^R(D^-_{i-k,i;j}, \dots, D^-_{i+k,i;j}) ,\\
\widehat{D}^-_{i,j + 1/2} & = &  \mathcal{R}^L(D^+_{i;j,j-k}, \dots, D^+_{i;j,j+k} ) + \mathcal{R}^R(D^-_{i; j,j+1-k}, \dots, D^-_{i;j,j+1+k}),\\
\widehat{D}^+_{i,j - 1/2} & = &  \mathcal{R}^L(D^+_{i;j-1-k,j}, \dots, D^+_{i;j-1 +k,j} ) + \mathcal{R}^R(D^-_{i;j-k,j}, \dots, D^-_{i;j+k,j}) . 
\end{eqnarray}
Here, the following notation has been used:
\begin{eqnarray} 
D^\pm_{i,l;j} &=& A^\pm_{1;i,l;j} (U_{l,j} - U_{i,j}) ,\quad l=i-k, \dots, i+k,\\
D^\pm_{i;j,l} &=& A^\pm_{2;i;j,l} (U_{i,l} - U_{i,j}), \quad l=j-k, \dots, j+k,
\end{eqnarray}
where 
$$
A_{1; i,l;j} = A_{1,\Psi}(U_{i,j}, U_{l,j}), \quad 
A_{2; i;j,l} = A_{2,\Psi}(U_{i,j}, U_{i,l})
$$
and the super-indices $\pm$ represent their splitting matrices: both the Upwind and the LF splittings can be readily extended to 2D.


Proposition \ref{prop:order} can be easily extended to prove that \eqref{eq:semi_2d} is a numerical method of order $p = 2k +1$ 
provided that Property  \eqref{eq:path_cons} is satisfied for $A_1$ and $A_2$.
\subsection{Problems with source terms}
\label{sec:numerical_2D_wb}
We now consider systems of the form
\begin{equation}\label{eq:2dnoncons_st}
	U_t + A_1(U) U_x + A_2 (U) U_y = S_1(U)H_x + S_2(U)H_y,
 \end{equation}
 where $H(x,y)$ is again a known function. Strategy 1 described in Section \ref{sec:well-balance} can be readily extended to problem \eqref{eq:2dnoncons_st}: it is enough to redefine the fluctuations as follows:
\begin{eqnarray}
D^\pm_{i,l;j} & = & A^\pm_{1;i,l;j} (U_{l,j} - U_{i,j}) - S^\pm_{1;i,l;j} (H(x_l,y_j) - H(x_i,y_j)) ,\\
D^\pm_{i;j,l} & = & A^\pm_{2;i;j,l} (U_{i,l} - U_{i,j}) - S^\pm_{2;i;j,l} (H(x_i, y_l) - H(x_i, y_j)) ,
\end{eqnarray}
where $A_{1;i,k;j}$, $S_{1; i,k;j} $, represent respectively the intermediate matrix and source term given by the Roe linearization for the states $U_{i,j}$ and $U_{l,j}$
and  $A_{2;i;j,l}$, $S_{2; i; j,l} $ the corresponding to the states $U_{i,j}$, $U_{i,l}$. 

It can be shown as in Proposition \ref{prop:wb_St} that the numerical method is well-balanced for stationary solutions that satisfy that
 given $(x_L, y), (x_R,y), (x, y_D), (x, y_U)  \in \mathbb{R}^2$ one has 
\begin{eqnarray}\label{eq:spl_wb_2d_1}
& & A_{1,\widetilde\Psi}(U^*(x_L,y), U^*(x_R,y)) (U^*(x_R,y) - U^*(x_L,y))\\\nonumber
& &\qquad \qquad  = 
S_{1,\widetilde\Psi}(U^*(x_L,y), U^*(x_R,y)) (H(x_R,y) - H(x_L,y)), \\
\label{eq:spl_wb_2d_2}
& & A_{2,\widetilde\Psi}(U^*(x,y_D), U^*(x,y_U)) (U^*(x,y_U) - U^*(x,y_D)) \\\nonumber
& & \qquad \qquad= 
S_{2,\widetilde\Psi}(U^*(x,y_D), U^*(x,y_U)) (H(x,y_U) - H(x,y_D))
\end{eqnarray} 
These equalities are satisfied if the functions
\begin{equation}\label{eq:pathx}
s \in [0,1] \to \Psi_U \left(s; \left[ \begin{array}{c} U^*(x_L,y) \\H(x_L,y) \end{array} \right]; \left[ \begin{array}{c} U^*(x_R,y) \\H(x_R,y) \end{array} \right] \right),
\end{equation}
\begin{equation}\label{eq:pathy}
s \in [0,1] \to \Psi_U \left(s; \left[ \begin{array}{c} U^*(x,y_D) \\H(x,y_D) \end{array} \right]; \left[ \begin{array}{c} U^*(x,y_U) \\H(x,y_U) \end{array} \right] \right),
\end{equation}
define respectively the same curves in $\Omega$ as
\begin{equation}\label{eq:ustarx}
x \in [x_L, x_R] \to U^*(x,y),\end{equation} 
\begin{equation}\label{eq:ustary}
y \in [y_D, y_U] \to U^*(x,y).\end{equation}
This geometrical property is much more restrictive in 2D than in 1D and, in general, only stationary solutions that are essentially 1D or some particular families of stationary solutions satisfy them, as will be seen in the two-layer shallow-water case. 

Strategy 2 can be extended easily  to 2D problems to design numerical methods that preserve a given family of $m$-parameter stationary solutions
$$
U^*(x,y;  c_1, \dots, c_m)$$
with $m < N$, assuming again that, given $\bar x, \bar y, \bar U$, 
there exists a unique choice of the parameters such that
$$
CU^*_i(\bar x, \bar y) = C \bar U,
$$
Once an element of the family $U^*_{i,j}$ has been determined, the numerical solution is written as follows:
\begin{eqnarray}
  & & U^{'}_{i,j} + \frac{1}{\Delta x} \left( \widehat{D}^-_{i + 1/2,j} + \widehat{D}^+_{i - 1/2,j}\right) + \frac{1}{\Delta y} \left( \widehat{D}^-_{i ,j+ 1/2} + \widehat{D}^+_{i,j - 1/2}\right)\\ \nonumber
  & & \qquad \qquad = (S_1(U_{i,j}) - S_1(U^*_{i,j}(x_i, y_j)))H_x(x_i, y_j) +(S_2(U_{i,j}) - S_2(U^*_{i,j}(x_i, y_j)))H_y(x_i, y_j)  ,\label{eq:semi_2d_st2}   
\end{eqnarray}
with
\begin{eqnarray}
\widehat{D}^-_{i + 1/2,j} & = &  \mathcal{R}^L(D^+_{i,i-k;j} - D^{*,+}_{i,i-k;j} , \dots, D^+_{i,i+k;j} - D^{*,+}_{i,i+k;j}) \nonumber \\
& & + \mathcal{R}^R(D^-_{i, i+1-k;j} - D^{*,-}_{i, i+1-k;j}, \dots, D^-_{i,i+1+k;j} - D^{*,-}_{i,i+1+k;j}) ,\\
\widehat{D}^+_{i - 1/2,j} & = &  \mathcal{R}^L(D^+_{i-1-k,i;j} - D^{*,+}_{i-1-k,i;j}, \dots, D^+_{i-1 +k,i;j} - D^{*,+}_{i-1 +k,i;j} ) \nonumber\\
& & + \mathcal{R}^R(D^-_{i-k,i;j} - D^{*,-}_{i-k,i;j} , \dots, D^-_{i+k,i;j} - D^{*,-}_{i+k,i;j}) ,\\
\widehat{D}^-_{i,j + 1/2} & = &  \mathcal{R}^L(D^+_{i;j,j-k} - D^{*,+}_{i;j,j-k} , \dots, D^+_{i;j,j+k} - D^{*,+}_{i;j,j+k}) \nonumber \\
& & + \mathcal{R}^R(D^-_{i;j,j+1-k} - D^{*,-}_{i;j,j+1-k}, \dots, D^-_{i;j,j+1+k} - D^{*,-}_{i;j,j+1+k}) ,\\
\widehat{D}^+_{i ,j- 1/2} & = &  \mathcal{R}^L(D^+_{i;j-1-k,j} - D^{*,+}_{i;j-1-k,j}, \dots, D^+_{i;j-1 +k,j} - D^{*,+}_{i;j-1 +k,j} ) \nonumber\\
& & + \mathcal{R}^R(D^-_{i;j-k,j} - D^{*,-}_{i;j-k,j} , \dots, D^-_{i;j+k,j} - D^{*,-}_{i;j+k,j}) .
\end{eqnarray}
Here, as in the 1D case, in the fluctuations $D^{*, \pm}_{k,l;m}$, $U_{k,m}$ and $U_{l,m}$ are replaced by $U^*_{i,j}(x_k, y_m)$ and $U^*_{i,j}(x_l, y_m)$, and in the fluctuations
$D^{*, \pm}_{k; l,m}$, $U_{k,l}$ and $U_{k,m}$ are replaced by $U^*_{i,j}(x_k, y_l)$ and $U^*_{i,j}(x_k, y_m)$.


\section{Application to the two-layer shallow water model}
\label{Sec:two_swe}
In this Section, we apply the proposed well-balanced schemes in Section \ref{sec:nuemrical_2D} to 2D two-layer shallow water system  that governs the flow of two superposed layers of immiscible fluids with different constant densities:
\begin{equation}
\begin{aligned}
& \left(h_1\right)_t+\left(h_1u_{1,1}\right)_x+\left(h_1u_{1,2}\right)_y=0, \\
& \left(h_1u_{1,1}\right)_t+\left(h_1 u_{1,1}^2+\frac{g}{2} h_1^2\right)_x+\left(h_1 u_{1,1} u_{1,2}\right)_y=-g h_1 Z_x-g h_1\left(h_2\right)_x, \\
& \left(h_1u_{1,2}\right)_t+\left(h_1 u_{1,1} u_{1,2}\right)_x+\left(h_1 u_{1,2}^2+\frac{g}{2} h_1^2\right)_y=-g h_1 Z_y-g h_1\left(h_2\right)_y, \\
& \left(h_2\right)_t+\left(h_2u_{2,1}\right)_x+\left(h_2u_{2,2}\right)_y=0, \\
& \left(h_2u_{2,1}\right)_t+\left(h_2 u_{2,1}^2+\frac{g}{2} h_2^2\right)_x+\left(h_2 u_{2,1} u_{2,2}\right)_y=-g h_2 Z_x-g r h_2\left(h_1\right)_x, \\
& \left(h_2u_{2,2}\right)_t+\left(h_2 u_{2,1} u_{2,2}\right)_x+\left(h_2 u_{2,2}^2+\frac{g}{2} h_2^2\right)_y=-g h_2 Z_y-g r h_2\left(h_1\right)_y,
\label{MODEL:TL-SWEs}
\end{aligned}
\end{equation}
where
\begin{itemize}
	\item $h_k$, $k=1,2$ is the thickness of the $k$th layer;
	\item $\vec u _k = (u_{k,1}, u_{k,2})$, $k = 1,2$ is the velocity of the $k$th layer;
	\item $\displaystyle r = \frac{\rho_1}{\rho_2}$, where $\rho_k$, $k = 1,2$ is the density of the $k$th layer ($\rho_1 < \rho_2$).  (index 1 corresponds to the upper layer);
 	\item $c_k = \sqrt{g h_k}$, $k = 1,2$;
  \item $Z(x,y)$ is the bottom function.
\end{itemize}
The equations can be written in the form \eqref{eq:2dnoncons_st} with $N = 6$, $H \equiv - Z$,
$$ U = \left[ \begin{array}{c} h_1, h_1 u_{1,1}, h_1 u_{1,2},  h_2, h_2 u_{2,1}, h_2 u_{2,2} \end{array} \right]^T,$$
\begin{equation}
\begin{aligned}
    A_1(U) &=
	\left[
	\begin{array}{cccccc}
		0 & 1 & 0 & 0 & 0 & 0 \\
		c_1^2 - u_{1,1}^2 & 2 u_{1,1} & 0 & c_1^2 & 0 & 0 \\
		-u_{1,1}u_{1,2} & u_{1,2} & u_{1,1} & 0 & 0 & 0 \\
		0 & 0 & 0 & 0 & 1 & 0 \\
		rc_2^2 & 0 & 0 & c_2^2 - u_{2,1}^2 & 2 u_{2,1} & 0 \\
		0 & 0 & 0 & -u_{2,1}u_{2,2} & u_{2,2} & u_{2,1}
		\end{array}
	\right],\quad S_1(U) = \left[ \begin{array}{c}  0 \\ gh_1 \\ gh_1 \\ 0 \\ 0 \\ 0\end{array} \right], \\
\label{eq:A1_2d}
A_2(U) &=
	\left[
	\begin{array}{cccccc}
		0 & 0 & 1 & 0 & 0 & 0 \\
		-u_{1,1}u_{1,2} & u_{1,2} & u_{1,1} & 0 & 0 & 0 \\
		c_1^2 - u_{1,2}^2 & 0 & 2 u_{1,2} & c_1^2 & 0 & 0 \\
		0 & 0 & 0 & 0 & 0 & 1 \\
		0 & 0 & 0 & -u_{2,1}u_{2,2} & u_{2,2} & u_{2,1}\\
		rc_2^2 & 0 & 0 & c_2^2 - u_{2,2}^2 & 0& 2 u_{2,2}
		\end{array}
	\right],\quad S_2(U) = \left[ \begin{array}{c}  0 \\ 0 \\0 \\ 0 \\ gh_2 \\ gh_2 \end{array} \right]
\end{aligned}
\end{equation}

The characteristic equation of $A_1(U)$ is
$$
(\lambda - u_{1,1})(\lambda - u_{2,1})\Bigl( \bigl( (\lambda - u_{1,1})^2 - gh_1 \bigr)
\bigl( (\lambda - u_{2,1})^2 - gh_2 \bigr) - r g^2 h_1 h_2\Bigr) = 0.
$$
The eigenvalues are then the four roots $\lambda_k$, $k = 1,..,4$ of the equation
$$
\bigl( (\lambda - u_{1,1})^2 - gh_1 \bigr)
\bigl( (\lambda - u_{2,1})^2 - gh_2 \bigr)  =   r g^2 h_1 h_2,
$$
and
$$
\lambda_5 = u_{1,1}, \quad \lambda_6 = u_{2,1}.
$$
The corresponding eigenvectors are
\begin{equation}\label{eq:eigenvectorsA1}
\vec R_k = \left[
\begin{array}{c}
	1 \\
	\lambda_k \\
	u_{1,2} \\
	\mu_k \\
	\mu_k \lambda_k \\
	\mu_k u_{2,2}
\end{array}\right], \quad k=1, \dots, 4,
\quad \vec R_5 = \left[ \begin{array}{c} 0 \\0 \\ 1 \\ 0 \\ 0 \\ 0 \end{array} \right], \quad
\vec R_6 = \left[ \begin{array}{c} 0 \\ 0 \\ 0 \\ 0 \\ 0 \\ 1 \end{array} \right], 
\end{equation}
with
$$
\mu_k = \frac{(\lambda_k - u_{1,1})^2}{c_1^2}-1.
$$
The eigenvalues and eigenvectors of $A_2(U)$ can be computed similarly.

For $r \approx 1$, first-order approximation of  $\lambda_1, \dots, \lambda_4$ is given similar as \cite{Schijf1953TheoreticalCO}:
\begin{equation}
 \lambda_{e x t}^{ \pm}=U_m \pm \sqrt{g\left(h_1+h_2\right)},\quad
      \lambda_{\text {int }}^{ \pm}=U_c \pm \sqrt{g^{\prime} \frac{h_1 h_2}{h_1+h_2}\left[1-\frac{\left(u_1-u_2\right)^2}{g^{\prime}\left(h_1+h_2\right)}\right]},
\label{eq:num_eig_values}
\end{equation}
where
$$
U_m=\frac{h_1 u_1+h_2 u_2}{h_1+h_2}, \quad U_c=\frac{h_1 u_2+h_2 u_1}{h_1+h_2},
$$
and $g' = \left(1-r\right)g$ is the reduced gravity.
Observe that $\lambda_{\text {int }}$ become complex when
$$
\frac{\left(u_1-u_2\right)^2}{g^{\prime}\left(h_1+h_2\right)}>1,
$$
so that the system is expected to lose hyperbolicity in these cases. Numerical techniques to overcome sporadic episodes of loss of hyperbolicity 
can be found in \cite{Castro-Diaz2011, KRVAVICA2018187}.

The 1D two-layer shallow-water model will be also considered: it can be written in the form \eqref{eq:noncon+st} with $N = 4$, $H \equiv -Z$,
\begin{equation}
U = \left[ \begin{array}{c} h_1 \\ h_1 u_1 \\ h_2 \\ h_2 u_2 \end{array} \right], \quad 
A(U)=\left[ \begin{array}{cccc}
0 & 1 & 0 & 0 \\
g h_1-u_1^2 & 2 u_1 & g h_1 & 0 \\
0 & 0 & 0 & 1 \\
r g h_2 & 0 & g h_2-u_2^2 & 2 u_2
\end{array}\right], \quad S = \left[
\begin{array}{c}
0 \\ gh_{1} \\ 0 \\ gh_{2}
\end{array}
\right],
\label{eq:matrix_A}
\end{equation}
where now $u_k$, $h_k$, $k = 1,2$ are respectively the velocity and thickness of the layers. The family of straight segments will be considered here to compute nonconservative products.
The following Roe linearization corresponding to the family of straight segments is available (see \cite{Fernandez-Nieto2011}):  
$$A(U_L,U_R) = \left[\begin{array}{ccccc}
0 & 1 & 0 & 0 \\
- \bar u_1^2 + \bar c_1^2 & 2 \bar u_1 & \bar c_1^2 & 0  \\
0 & 0 & 0 & 1  \\
r \bar c_2^2 & 0 & - \bar u_2^2+ \bar c_2^2 & 2 \bar u_2
\end{array}\right], \quad S(U_L,U_R) =  \left[\begin{array}{c} 0 \\ g \bar h_1 \\ 0 \\ g\bar h_2 \end{array}\right],
$$
where
$$
\bar u_k =\frac{\sqrt{h_{L,k}} u_{L, k} +\sqrt{h_{R,k}} u_{R, k}}{\sqrt{h_{L,k}}+\sqrt{h_{R, k}}},\quad \bar h_k = \frac{h_{L, k} +h_{R, k}}{2}, \quad \bar c_k =\sqrt{g \bar h_k }, \quad k=1,2.
$$
This Roe matrix and its natural extension to 2D will be used to implement WENO methods. 
Steady-states solutions that correspond to water-at-rest equilibria constitute a 2-parameter family:
$$u_{1,1}=u_{1,2}=u_{2,1}=u_{2,2} \equiv 0, \; h_1 = c_1, \; h_2 = - Z + c_2,$$
where $c_1$, $c_2$ are arbitrary parameters ($c_1 > 0$, $c_2 > \max(Z)$). We will focus here on methods that preserve this family.
Both Strategies 1 and 2 described in Section \ref{sec:well-balance} for 1D problems and \ref{sec:numerical_2D_wb} for 2D problems can be followed to design numerical methods that preserve water-at-rest solutions. In effect, for Strategy 1, the equalities \eqref{eq:spl_wb}, \eqref{eq:spl_wb_2d_1}, \eqref{eq:spl_wb_2d_2} can be easily checked for these stationary solutions: the equality of the curves given by  \eqref{eq:pathx} and \eqref{eq:ustarx} or those given by \eqref{eq:pathy} and \eqref{eq:ustary} can be easily checked if the family of straight segments is chosen: it derives from the linear nature of the relationships between variables that characterize water-at-rest solutions. 

For Strategy 2, observe that, given $\bar x,\bar y$ and a state 
$\bar U = [\bar h_1, \bar h_1\bar u_{1,1}, \bar h_1\bar u_{1,2},\bar h_2, \bar h_2 \bar u_{2,1}, \bar h_2\bar u_{2,2}]^T$, there exists a  unique water-at-rest stationary solution such as
$$
h_1^*(\bar x, \bar y) = \bar h_1, \quad h_2^*(\bar x, \bar y) = \bar h_2,$$
which is the one given by:
$$h_1^*(x,y) = \bar h_1,\quad  \bar h^*_2(x,y)= -Z(x,y) + Z(\bar x,\bar y) + \bar h_2, \quad u^*_{1,1} = u^*_{1,2} = u^*_{2,1} = u^*_{2,2} = 0,$$
i.e. in this case 
$$
C = \left[ \begin{array}{cccccc} 1 & 0 & 0 & 0 & 0 & 0 \\ 0 & 0 & 0 & 1 & 0 & 0 \end{array}\right],$$
so that the stationary solution $U^*_{i,j}$ used to implement the method is given by
\begin{equation}
    U^*_{i,j}(x,y) = \left[ h_{1;i,j}, 0 , 0 , -Z(x,y) + Z(x_i,y_j) + h_{2;i,j} , 0, 0 \right]^T.
\end{equation}

When the Upwind splitting scheme is used, systems of the form
\begin{equation*}
R\cdot \vec \alpha = U_R - U_L - A_l^{-1}S_k(H_R - H_L),
\end{equation*}
have to be solved to compute $\alpha_i$, $i = 1, \dots, 6$ such that \eqref{eq:sist_alpha} is satisfied. Here, $A_l = A_l(U_L, U_R)$, $S_l = S_l (U_L, U_R)$, $l =1,2$ represent the Roe linearization in the $x$ or $y$ direction,  and
$$
R = \left[ \begin{array}{c|c|c} \vec R_1 & \dots & \vec R_6 \end{array} \right], \quad
\vec{\alpha} = \left[ \begin{array}{c} \alpha_1 \\ \vdots \\ \alpha_6 \end{array} \right].
$$
Let us suppose that $l = 1$. In this case, $\vec R_i$, $i = 1, \dots, 6$ are given by \eqref{eq:eigenvectorsA1}. It can be checked that the system can be solved as follows:
\begin{enumerate}
	\item Solve system
 $$
 R \left[ \begin{array}{c} \alpha_1 \\ \alpha_2 \\\alpha_3 \\ \alpha_4 \end{array} \right] = V_R - V_L - A^{-1}S(H_R - H_L),
 $$
 where
 $$
 V_L = \left[ \begin{array}{c} h_{1,L} \\ h_{1,L} u_{1,1,L} \\ h_{2,L} \\ h_{2,L}u_{2,1,L} \end{array} \right], \quad V_R = \left[ \begin{array}{c} h_{1,R} \\ h_{1,R} u_{1,1,R} \\ h_{2,R} \\ h_{2,R}u_{2,1,R} \end{array} \right] , $$
	\begin{equation*} R = 
 	\left[ \begin{array}{ccc} 1, & \dots, & 1 \\
	   \lambda_1,  & \dots,  & \lambda_4 \\
       \mu_1, & \dots, & \mu_4 \\
       \mu_1\lambda_1, & \dots, & \mu_4 \lambda_4 \end{array} \right] ,
\quad
	A =
	\left[
	\begin{array}{cccccc}
		0 & 1  & 0 & 0  \\
		\bar c_1^2 - \bar u_{1,1}^2 & 2 \bar u_{1,1}  & \bar c_1^2 & 0 \\
		0 & 0 & 0  & 1  \\
		r\bar c_2^2 & 0 & \bar c_2^2 - \bar u_{2,1}^2 & 2 \bar u_{2,1}
		\end{array}
	\right], \quad 
 	S =
	\left[
	\begin{array}{c}
		0   \\
		g \bar h_1 \\
		0  \\
		g \bar h_2
		\end{array}
	\right].
\end{equation*}

	\item If $\lambda_i \not=0$, $i = 5,6$ compute
\begin{eqnarray*}
		\alpha_5 & = & \frac{F_3 - u_{1,2}\sum_{j=1}^4  \alpha_j}{\lambda_5}, \\
	    \alpha_6 & = & \frac{F_6 - u_{2,2}\sum_{j=1}^4  \mu_j \alpha_j}{\lambda_6}.
\end{eqnarray*}
Otherwise  (which is the case in water-at-rest stationary solutions) define
$$\alpha_5 = \alpha_6 = 0.$$

\end{enumerate}
In other words, the system to be solved in the 2D case reduces to those arising in 1D problems. Observe that, following this algorithm, the cases in which eigenvalues $\lambda_i$, $i=5,6$, vanish, and thus $A_k$ cannot be inverted, the coefficients $\alpha_j$ can be still computed.

\section{Numerical solutions}
\label{sec:numerical_sol}
In this section, we present 
some numerical results for 1D coupled Burgers,
the 1D and 2D two-layer shallow water equations.

Inheriting the notation from Section \ref{subsec:Imple}, two different WENO methods have been implemented and tested:
\begin{itemize}
\item WENO methods with Upwind splitting and Strategy 1 in Section \ref{ss:wb_s1} for the treatment of the source terms: these family of schemes will be called Method 1.

\item WENO methods with LF splitting and Strategy 2 in Section \ref{ss:wb_s2} for the treatment of the source terms: these family of schemes will be called Method 2.

\end{itemize}
For all the methods:
\begin{itemize}

\item fifth-order WENO is used in all of the numerical tests but in accuracy test; 

\item the third order TVD Runge-Kutta method in \cite{SHU1988439} is used for time stepping, which is a convex combination of forward Euler steps;

\item the computation of eigenvalues and eigenvectors are computed in analytic form according to \eqref{eq:num_eig_values} and \eqref{eq:eigenvectorsA1}, respectively, and the LAPACK library is used to solve linear systems. 

\item CFL = 0.45 is used in all cases.
\item the free boundary conditions are imposed except in the steady-state solutions and accuracy test.

\end{itemize}
The numerical experiments have been implemented using FORTRAN $90$ compiled with the INTEL ifort compiler run in a Dell Precision workstation with 24 CPU cores and 128 gigabytes of memory. 

In many numerical tests the results are comparable and, when this is the case, we only show the results obtained with Method 2.

\subsection{1D coupled Burgers Equation}
\label{num:bug}
In this section, we consider the coupled Burgers' system \eqref{CBs} and compare the numerical methods obtained with the methods based on the fluctuations \eqref{fluc1} and \eqref{fluc2} based respectively on the choice of the family of paths $\Psi_1$ given by \eqref{psi1} and $\Psi_2$ given by \eqref{psi2}.

\subsubsection{Smooth solution}
\label{sec:coupled_rarefaction}
The initial condition
\begin{equation}
        \left[u_0(x),v_0(x)\right]^T =
          \left[u_0(x),v_0(x)\right]^T = 
       \left[ 0.25 \sin(\pi x), 0.25 \sin(\pi x) \right]^T  
\end{equation}
is first consider in the interval $[0,1]$ and periodic boundary conditions are imposed. Figure \ref{Fig:compare_rare} shows the numerical solutions obtained at time $t=0.1$ with a mesh of 200 points, when the solution is still smooth. Reference solutions have been computed in a mesh of 6400 points and Table \ref{Tab:cb_Accuracy} shows the errors and the accuracy of the methods: as it can be seen, both methods seem to converge to the same solution, but only the one consistent with the family \eqref{psi1} achieves the optimal accuracy while the one consistent with the family \eqref{psi2} in only first-order accurate. We note that the high-order accuracy condition \eqref{eq:path_cons} for family of paths is then necessary.
\begin{figure}[H]
    \centering
\includegraphics[scale=0.4]{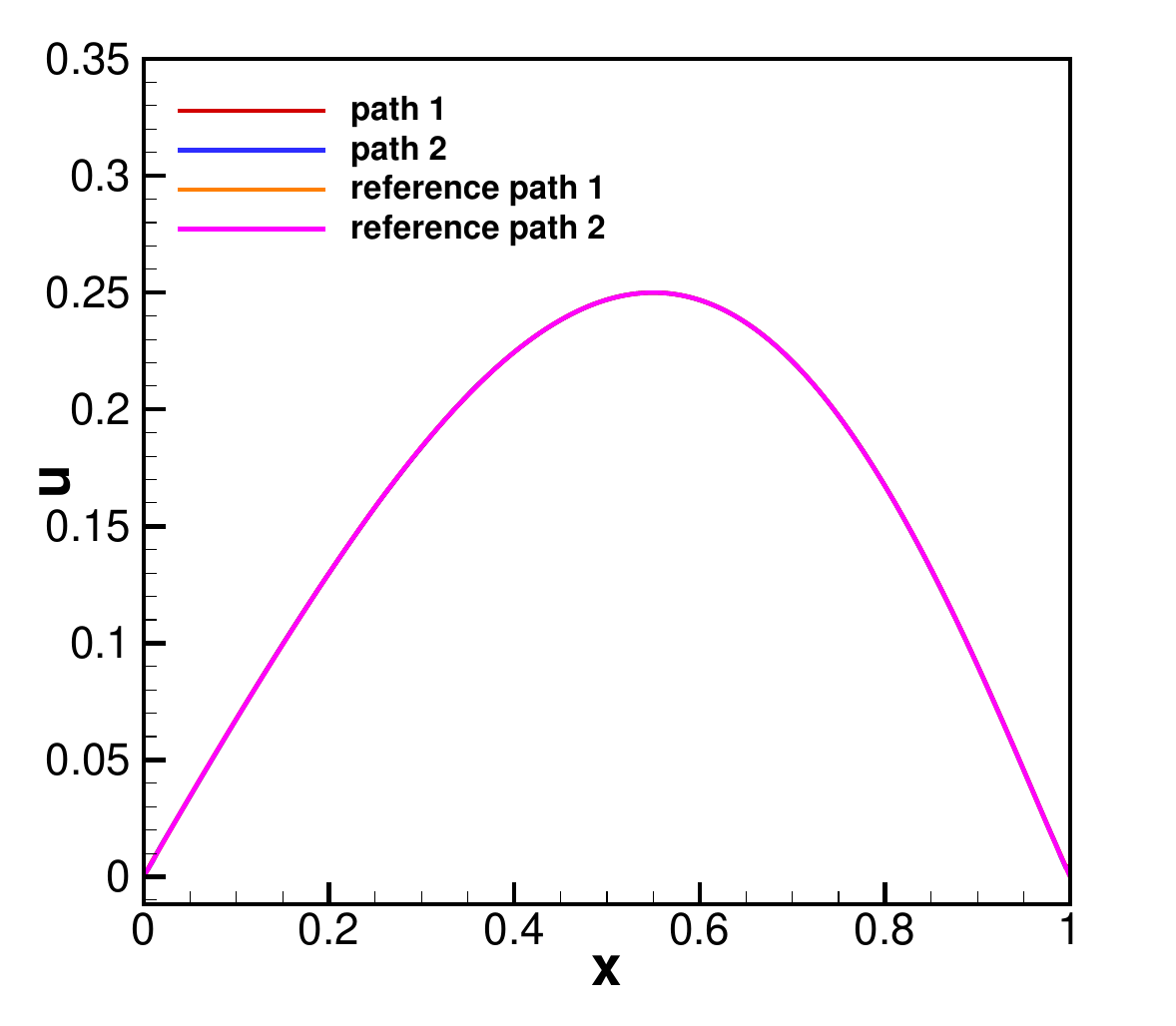}\qquad
\includegraphics[scale=0.4]{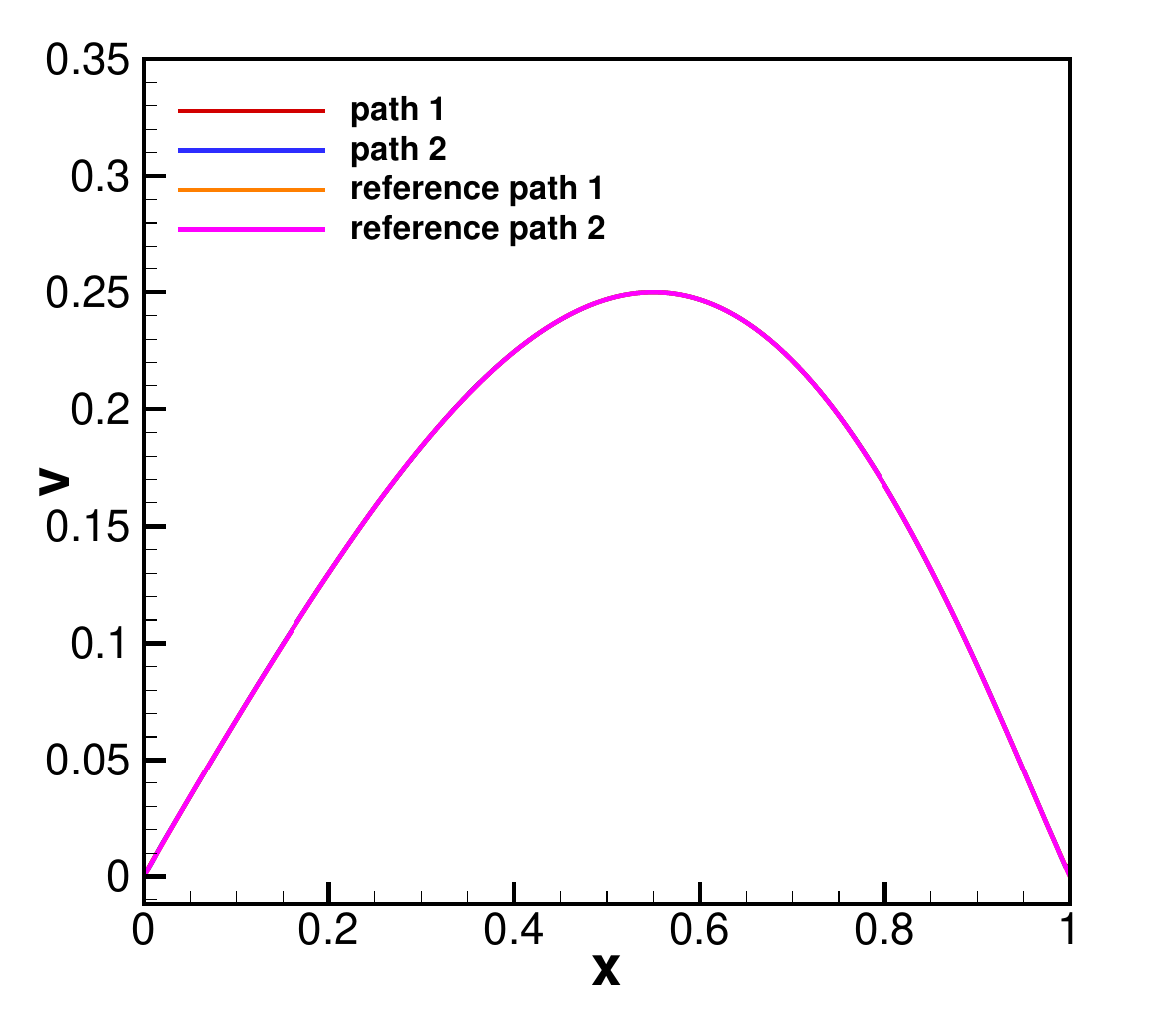} 
    \caption{ Section \ref{sec:coupled_rarefaction}:  smooth solution of the Coupled Burgers' system. Numerical solutions for $u$ (left) and $v$ (right) at $t=0.1$ obtained with the numerical methods consistent with the families of paths $\Psi_1$ and $\Psi_2$ using a mesh of 200 points.}
    \label{Fig:compare_rare}
\end{figure} 
\begin{table}[htbp]
\caption{Accuracy test with different paths, $L^\infty$ norm error for the coupled Burgers equations.}
\label{Tab:cb_Accuracy}
\begin{center}
\begin{tabular}{ccccc}\hline
N 
&\multicolumn{2}{c}{path 1} &\multicolumn{2}{c}{path 2}  \\
\cline{2-3}
\cline{4-5} 
&$L^\infty$\,error
&Order
&$L^\infty$\,error
&Order  \\  \hline       
25
&2.13e-6 &
&5.76e-4 &  \\
50
&5.01e-8 &5.41  
&2.86e-4 &1.01    \\
100
&1.29e-9 &5.28  
&1.42e-4 &1.01 \\
200
&3.65e-11 &5.14
&6.98e-5 &1.02 \\
400
&9.95e-13 &5.20
&3.38e-5 &1.05 \\ 
\hline
\end{tabular}
\end{center}
\end{table}
\subsubsection{Shock waves}
\label{sec:coupled_shock}
The goal of this test is to show that numerical methods based on different families of paths give different results in presence of discontinuous waves. We consider the Riemann problem given by the initial condition
\begin{equation}
\left[u_0(x),v_0(x)\right]^T =
       \begin{cases}              \left[2,2\right]^T,&\text{if}  \ x \leq 0.5, \\
              \left[1,1\right]^T,&\text{} \text{otherwise}.
        \end{cases}  
\end{equation}
The analytical solution is used as the reference solution.
If the family of straight segments $\Psi_1$ is chosen, the solution is an isolated shock wave traveling at speed 3. 
After numerically simulating the solution up to \( t = 0.1 \) using 200 uniform grids, the results are presented in Figure \ref{Fig:compare_shock}. As it can be seen, in this particular case the numerical method consistent with the family of paths \eqref{psi1} captures the correct solution, while the one consistent with the family \eqref{psi2} gives a solution with a stationary contact discontinuity at $x = 0.5$ and a different shock wave traveling at the same speed. 
\begin{figure}[H]
    \centering
\includegraphics[scale=0.4]{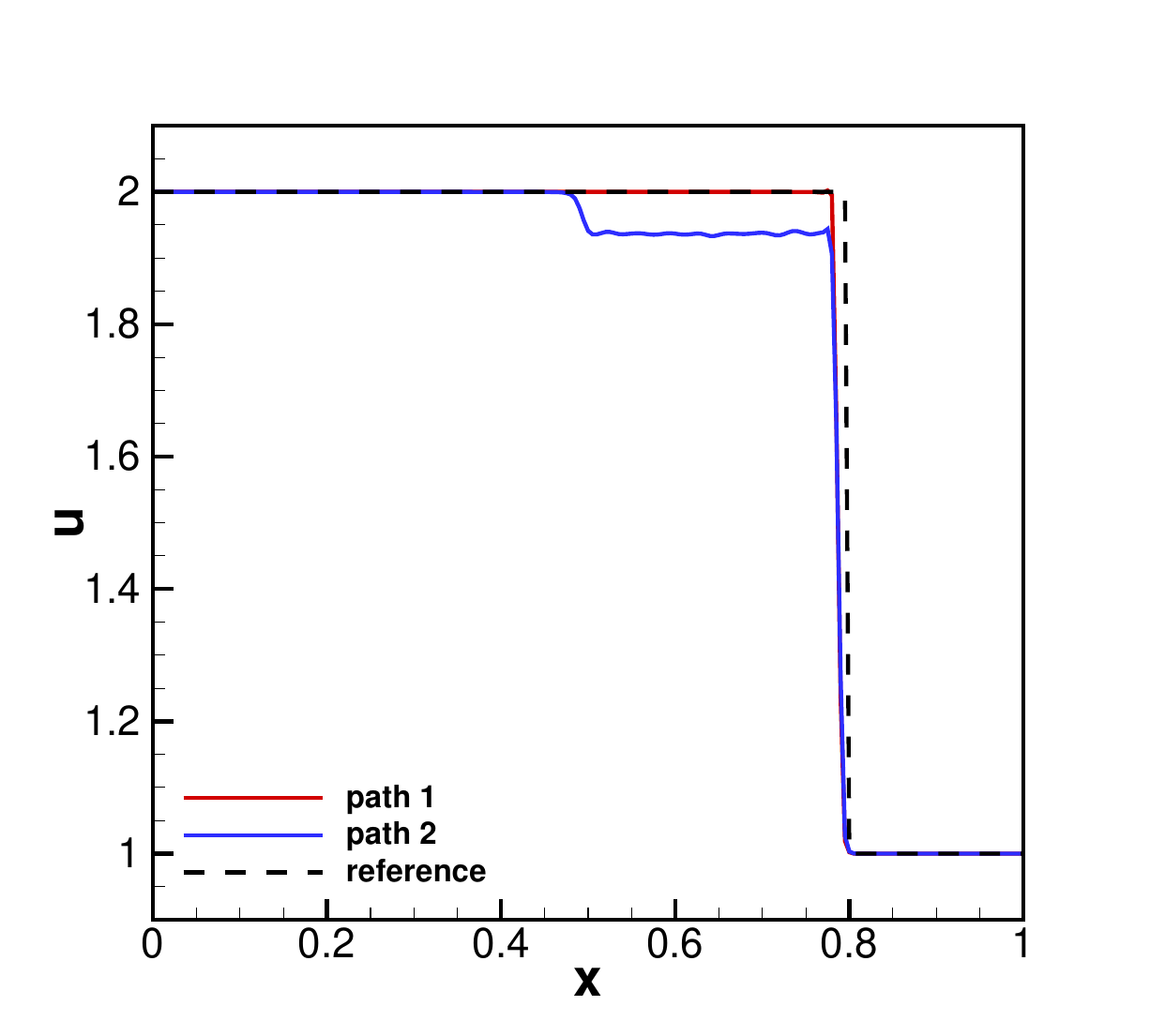}\quad
\includegraphics[scale=0.4]{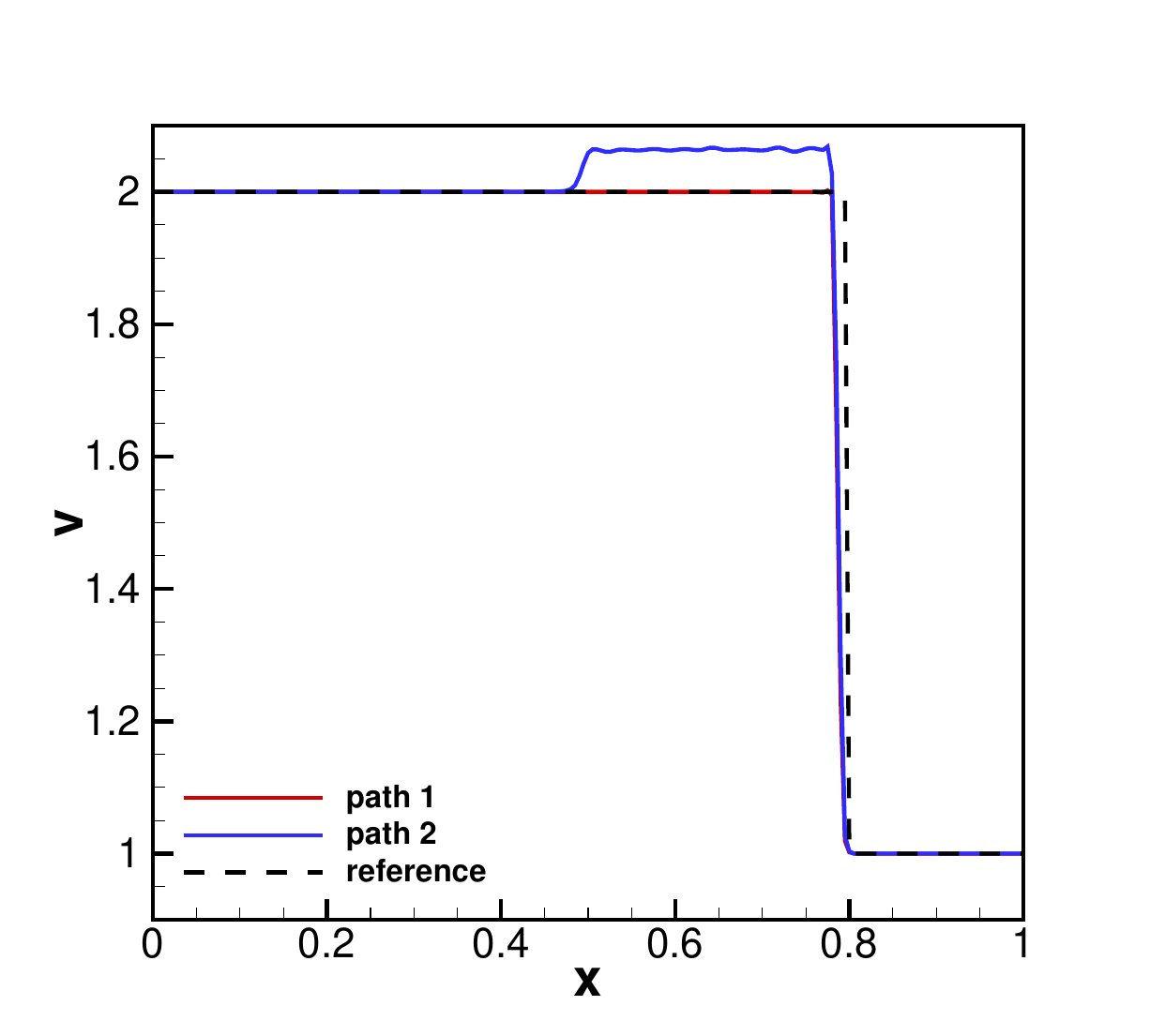} 
    \caption{ Section \ref{sec:coupled_shock}: Riemann problem for the coupled Burgers' system.  Numerical solutions for $u$ (left) and $v$ (right) at $t=0.1$ are obtained consistent with the families of paths $\Psi_1$ and $\Psi_2$ using a mesh of 200 points.}
\label{Fig:compare_shock}
\end{figure} 
\subsection{1D two-layer shallow water model}
\subsubsection{Small perturbations of a steady-state solution}
\label{subsec:small}
We use this test, taken from \cite{LEVEQUE1998346}, to verify the well-balanced property.
The density ratio is $r = 0.98 $ and the gravitational constant is $g = 10$. We consider smooth and discontinuous bottom topography. The smooth bottom topography is given by
$$
Z(x)= \begin{cases}0.25(\cos (10 \pi(x-0.5))+1)-2, & \text { if } 0.4<x<0.6, \\ -2, & \text { otherwise, }\end{cases}
$$
and the discontinuous one by
$$
Z(x)= \begin{cases}-1.5, & \text { if } x>0.5, \\ -2, & \text { otherwise. }\end{cases}
$$
The initial data is given by:
$$
\left(h_1, h_1 u_1, h_2, h_2 u_2\right)= \begin{cases}(1+\eta, 0,-1-Z(x),0), & \text { if } 0.1<x<0.2, \\ (1,0,-1-Z(x),0), & \text { otherwise. }\end{cases}
$$
The computational domain is $[-0.2,1]$ with extrapolation boundary conditions. We have first checked that for $\eta= 0$ the initial condition is preserved to machine accuracy, i.e. that the C-property is satisfied. Next, we set  $\eta= 0.00001$. The numerical solutions computed with $200$ points are compared with a reference solution computed with $2000$ points. The final time is $ T = 0.15$.
The numerical water surface, i.e. $\eta_1 = -Z + h_1 + h_2$, corresponding to the smooth and the discontinuous topographies are reported in Figure \ref{Fig:small_smooth_s123} (left and right respectively): it can be seen that small waves are captured without spurious oscillations as expected.
\begin{figure}[H]
    \centering
\includegraphics[scale=0.4]{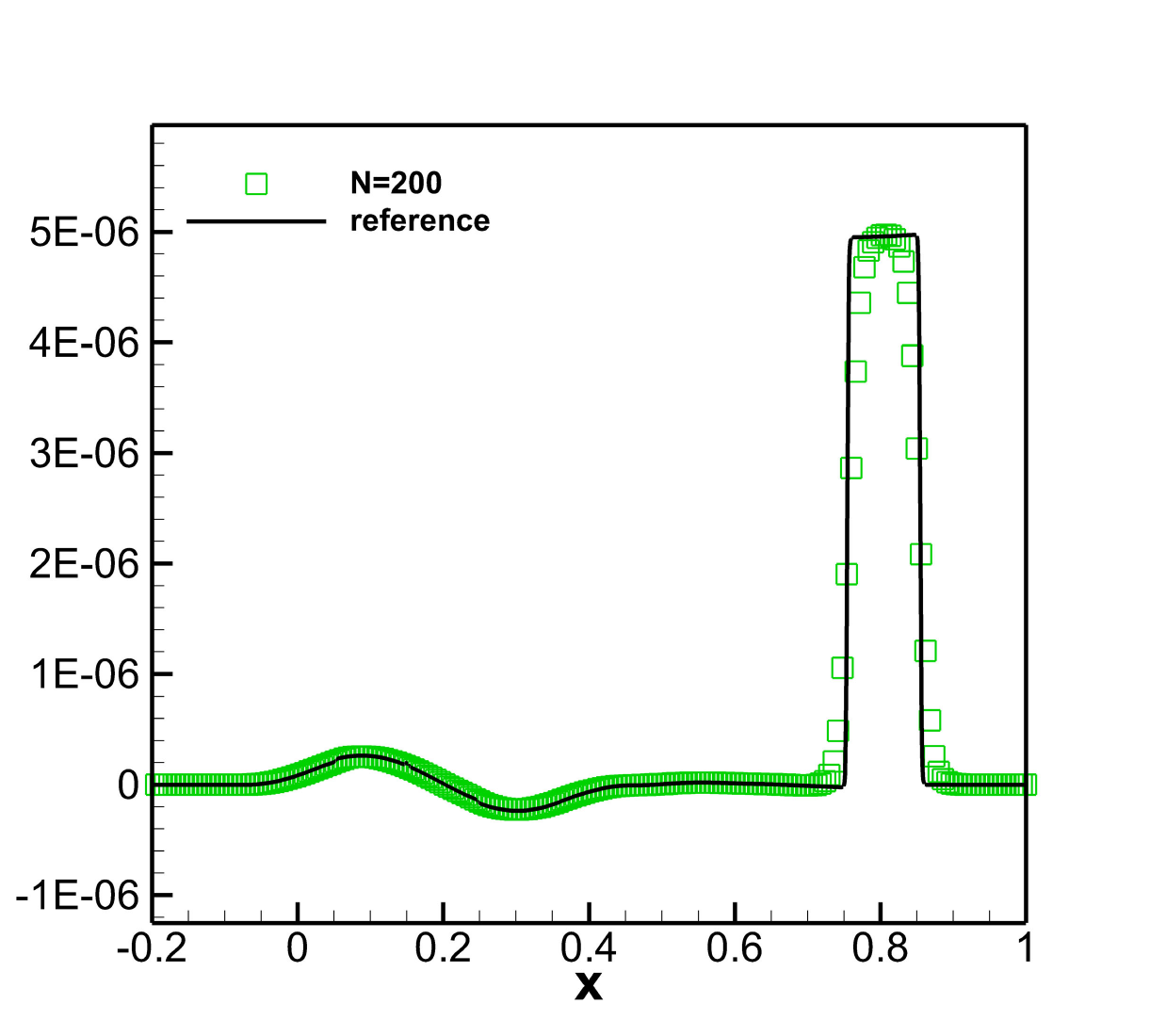} \qquad
\includegraphics[scale=0.4]{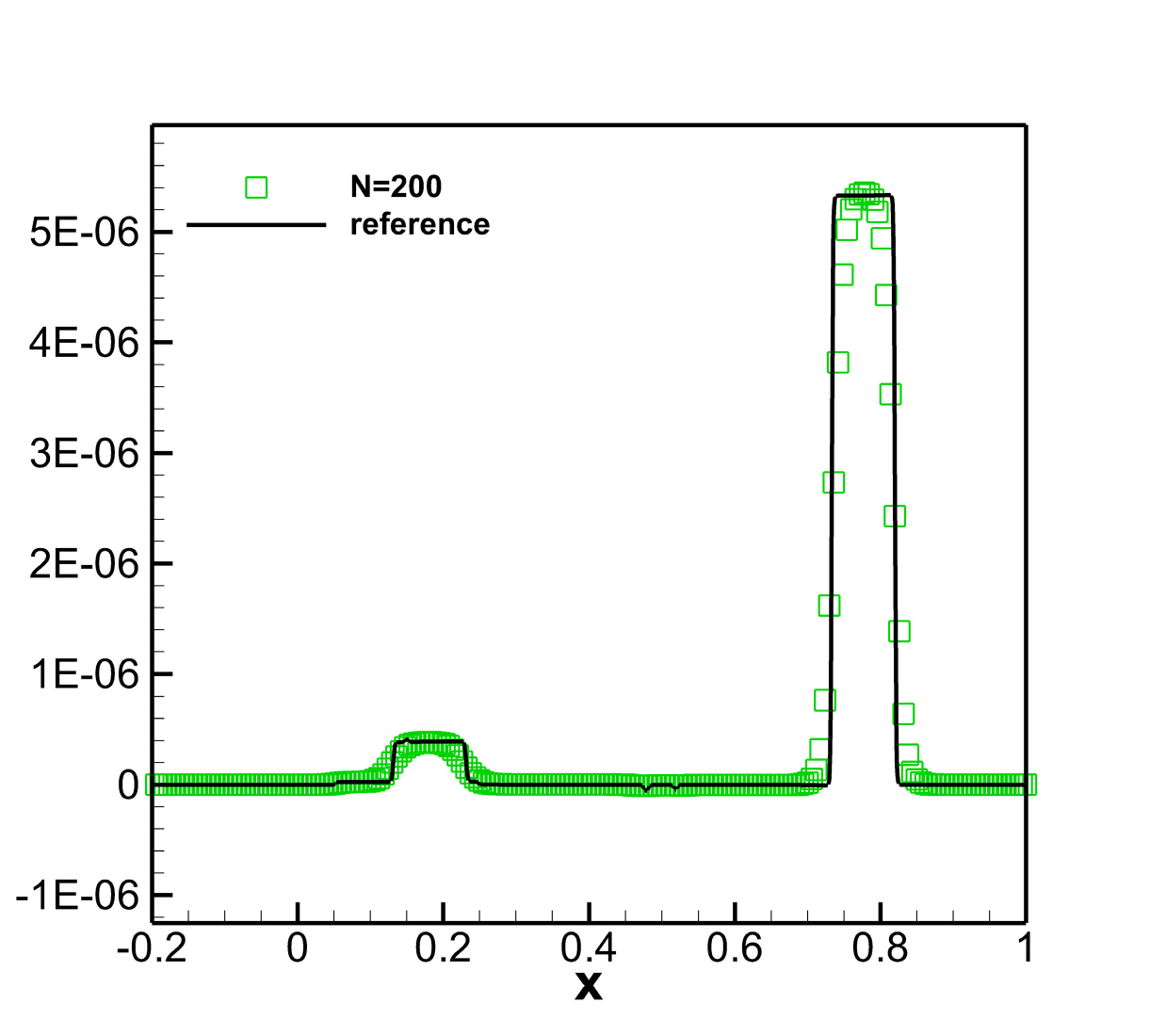} 
    \caption{Section \ref{subsec:small}:  small perturbation of a water-at-rest stationary solution with smooth (left) and discontinuous (right) bottom topographies. The numerical solution obtained with 200 points is compared with the reference solution obtained with 2000 points: water surface at $T = 0.15$.}
  \label{Fig:small_smooth_s123}
\end{figure}
\subsubsection{Riemann problems}
\label{sec:Interface}
We consider here two Riemann problems with flat bottom topography. This test is taken from \cite{Castro2001107}.  
In the first test, the computational domain is $[0,1]$ and the final time is $T=0.1$. The density ratio is $r=0.98$ and the gravitational constant is $g=10$. The initial condition is given by
$$
\left(h_1, h_1 u_1, h_2, h_2 u_2\right)= \begin{cases}(0.50,1.250,0.50,1.250), & \text { if } x<0.3, \\
(0.45,1.125,0.55,1.375), & \text { otherwise. }\end{cases}
$$
The flat bottom is placed at $Z(x) = -1$.
The water surface $E = - 1 + h_1 + h_2$, a zoom of $h_1$, and the velocity $u_1$ obtained at the final time with a mesh of 200 cells are compared in Figure \ref{Fig:s1_case1} with a reference solution obtained with 10000 points.  

In the second test case, taken from \cite{Kurganov2009}, the computational domain is $[-5,5]$ and the final time is $T=1$. The density ratio is again $r=0.98$ and the gravitational constant is $g=9.81$.
The initial data is given by
$$
\left(h_1(x, 0), q_1(x, 0), h_2(x, 0), q_2(x, 0)\right)= \begin{cases}(1.8,0,0.2,0), & \text { if } x<0, \\
(0.2,0,1.8,0), & \text { if } x>0 .\end{cases}
$$
The bottom is placed now at $Z(x) = -2$. The water surface $E = - 2 + h_1 + h_2$ and the interface $\eta = -2 + h_2$ obtained with a 500-point mesh are shown in Figure \ref{Fig:s1_case2}, together with a reference solution obtained with a fine mesh with $5000$ points.
\begin{figure}
    \centering
  \includegraphics[scale=0.35]{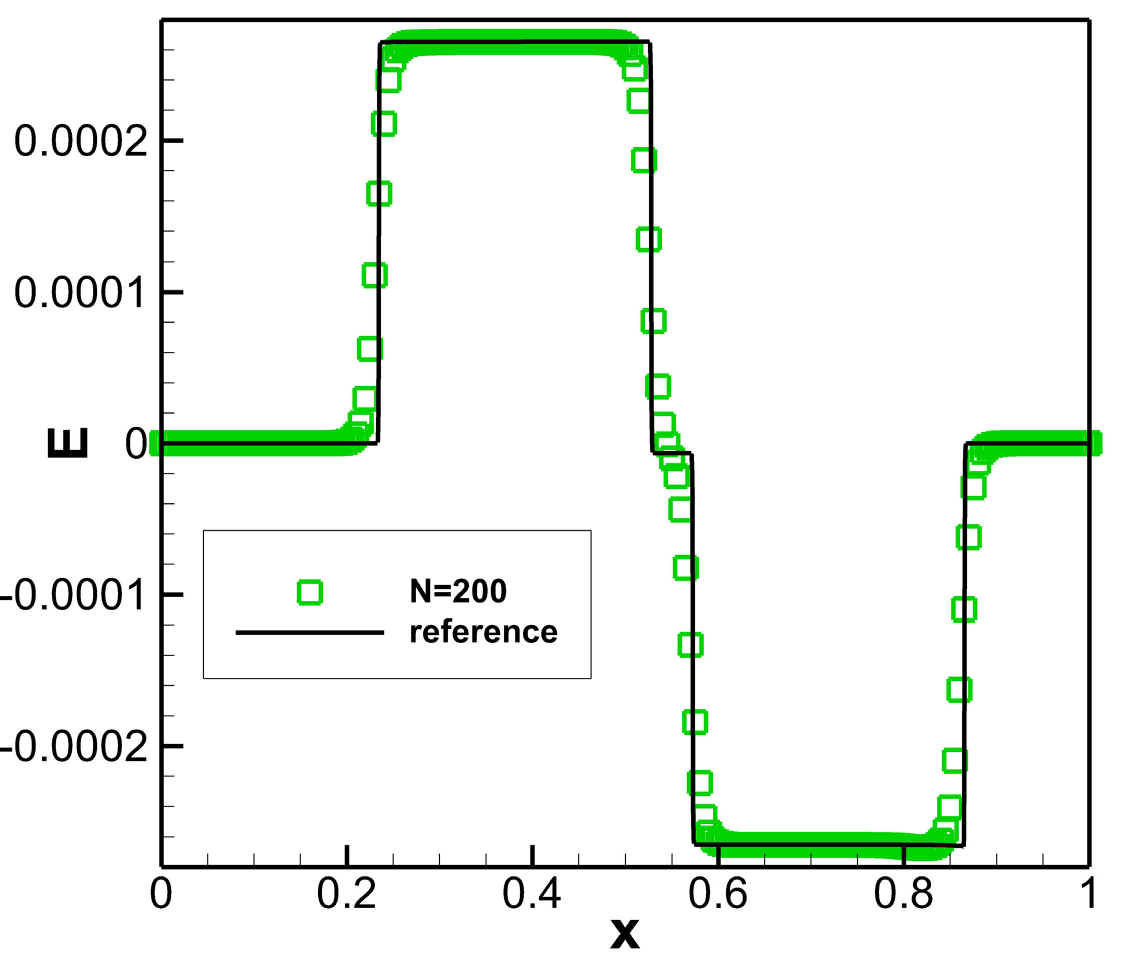}
  \qquad
\includegraphics[scale=0.35]
{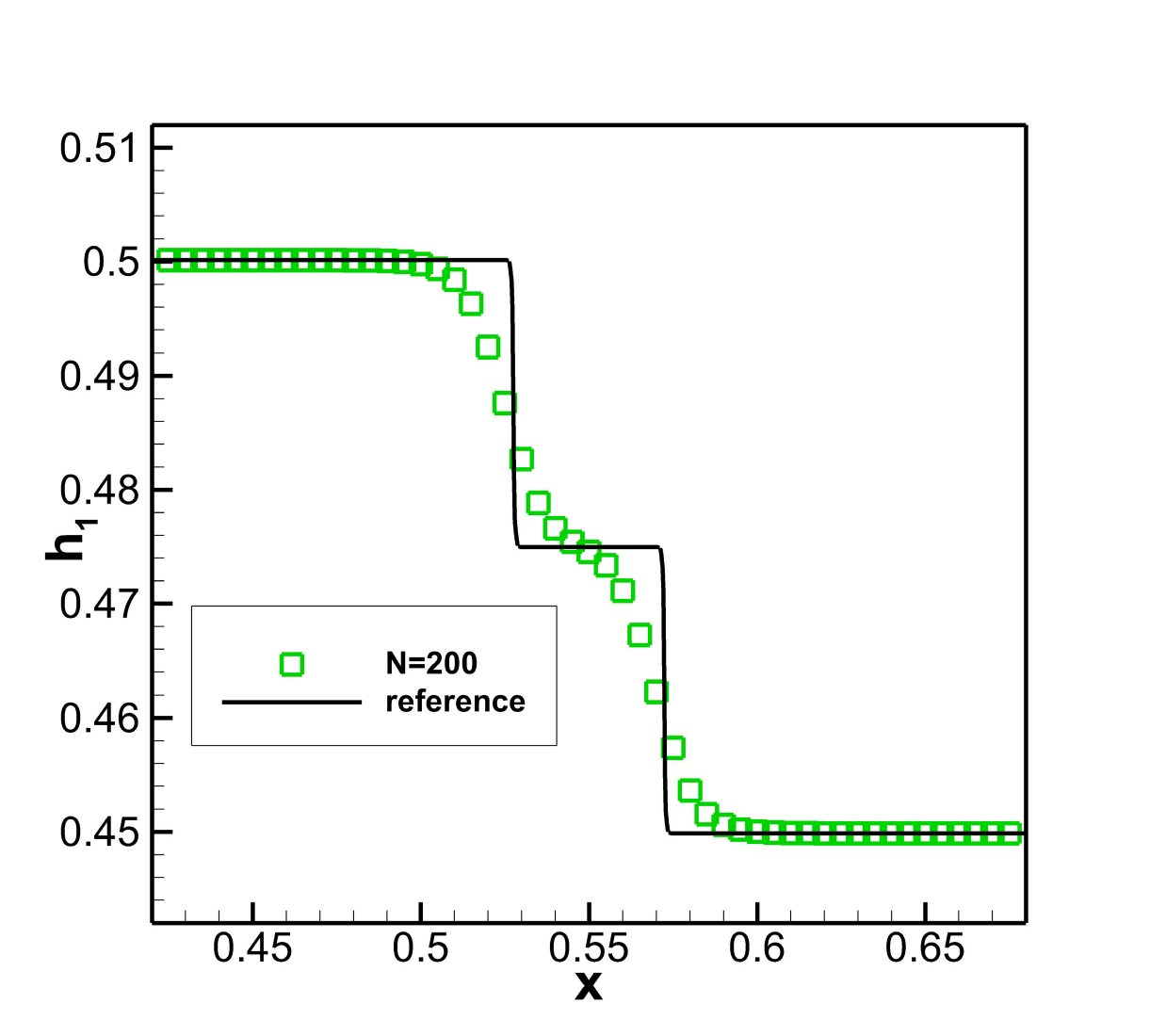}  \\
\includegraphics[scale=0.35]{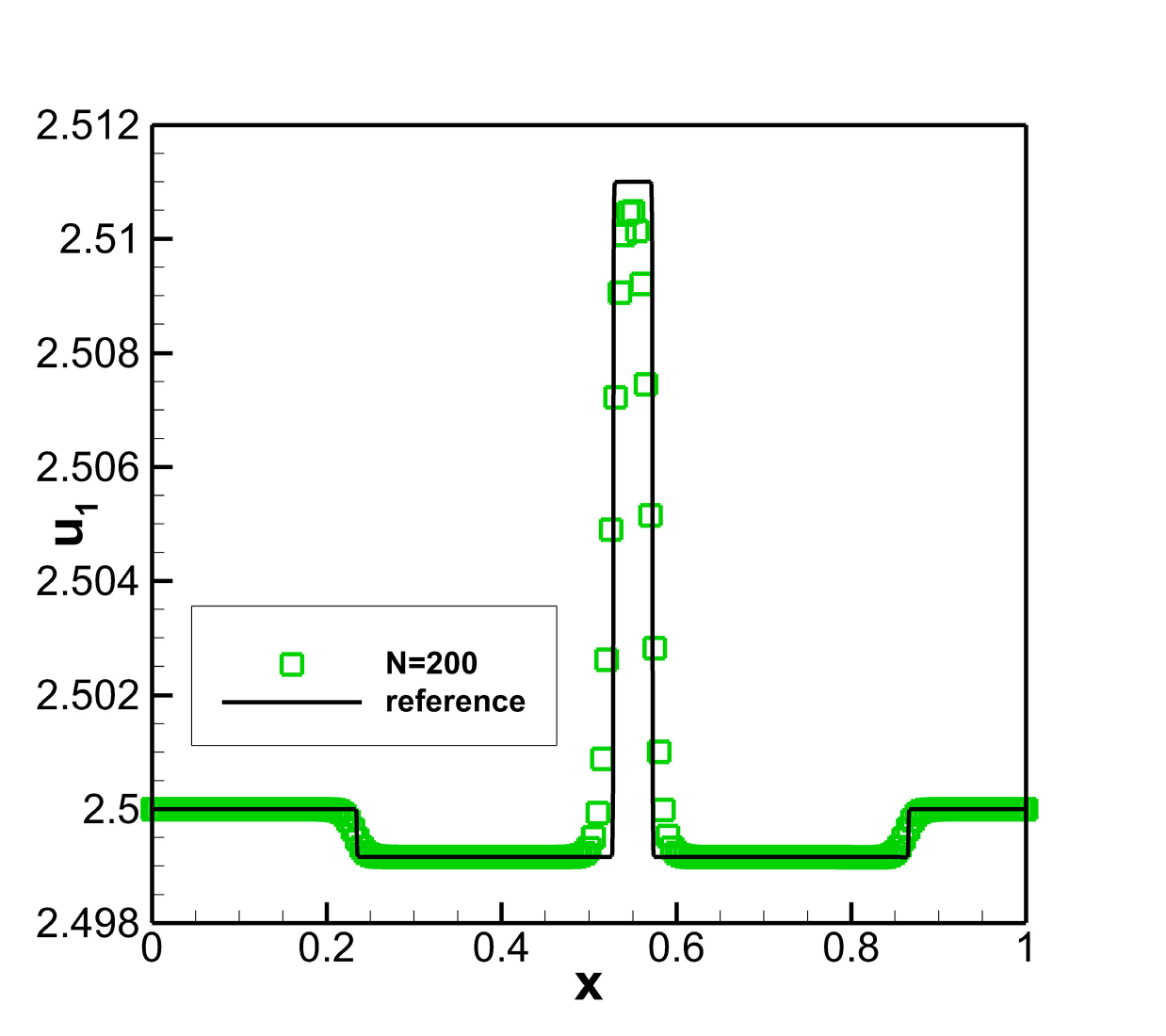}
    \caption{Section \ref{sec:Interface}: Riemann problem 1. The numerical solution computed with 200 points is compared to a reference solution computed with 10000 points. Top left: water surface; top right: $h_1$ (zoom); bottom: $u_1$.} \label{Fig:s1_case1}
\end{figure}
\begin{figure}
    \centering
\includegraphics[scale=0.35]{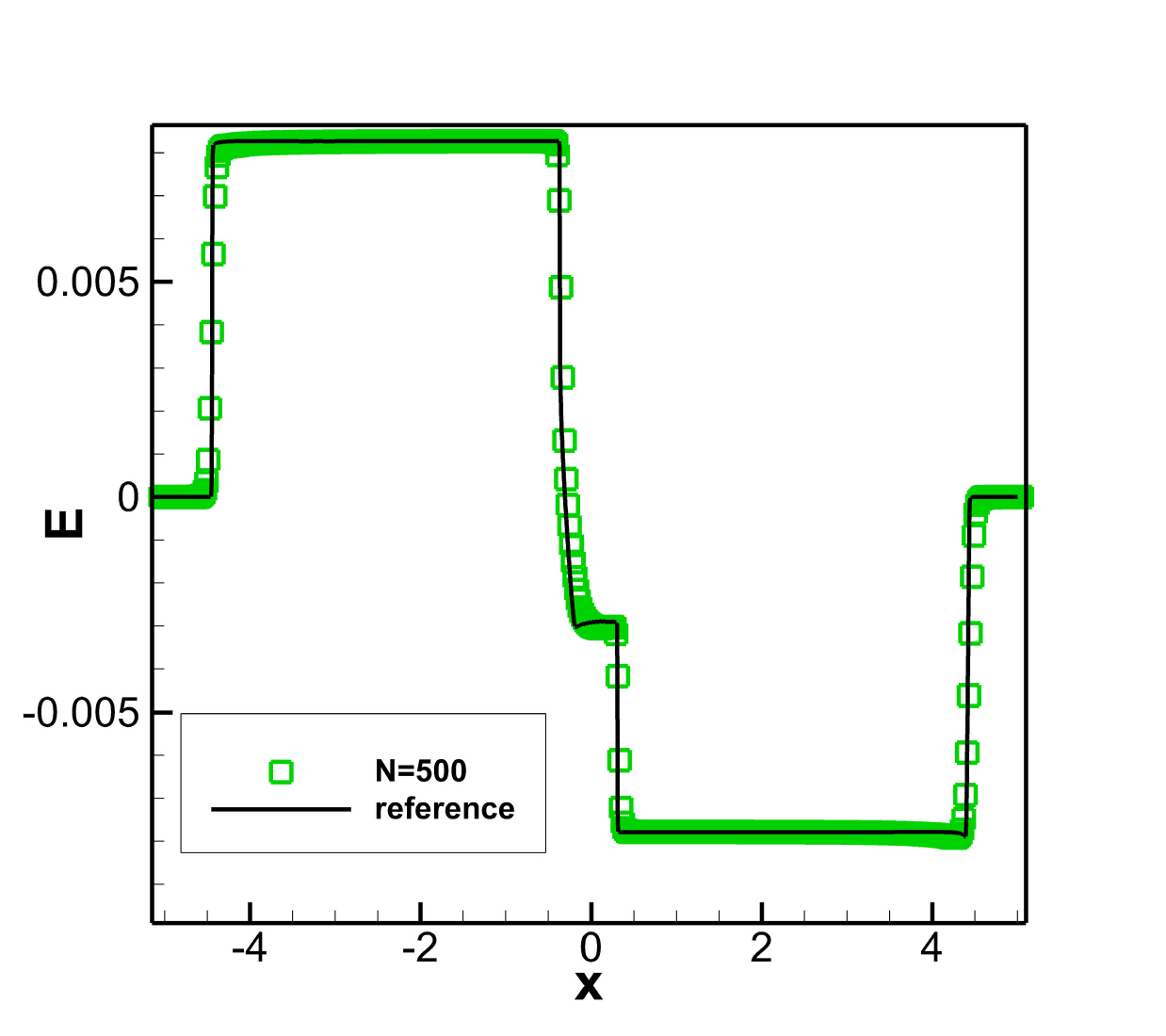}
  \qquad
\includegraphics[scale=0.35]
{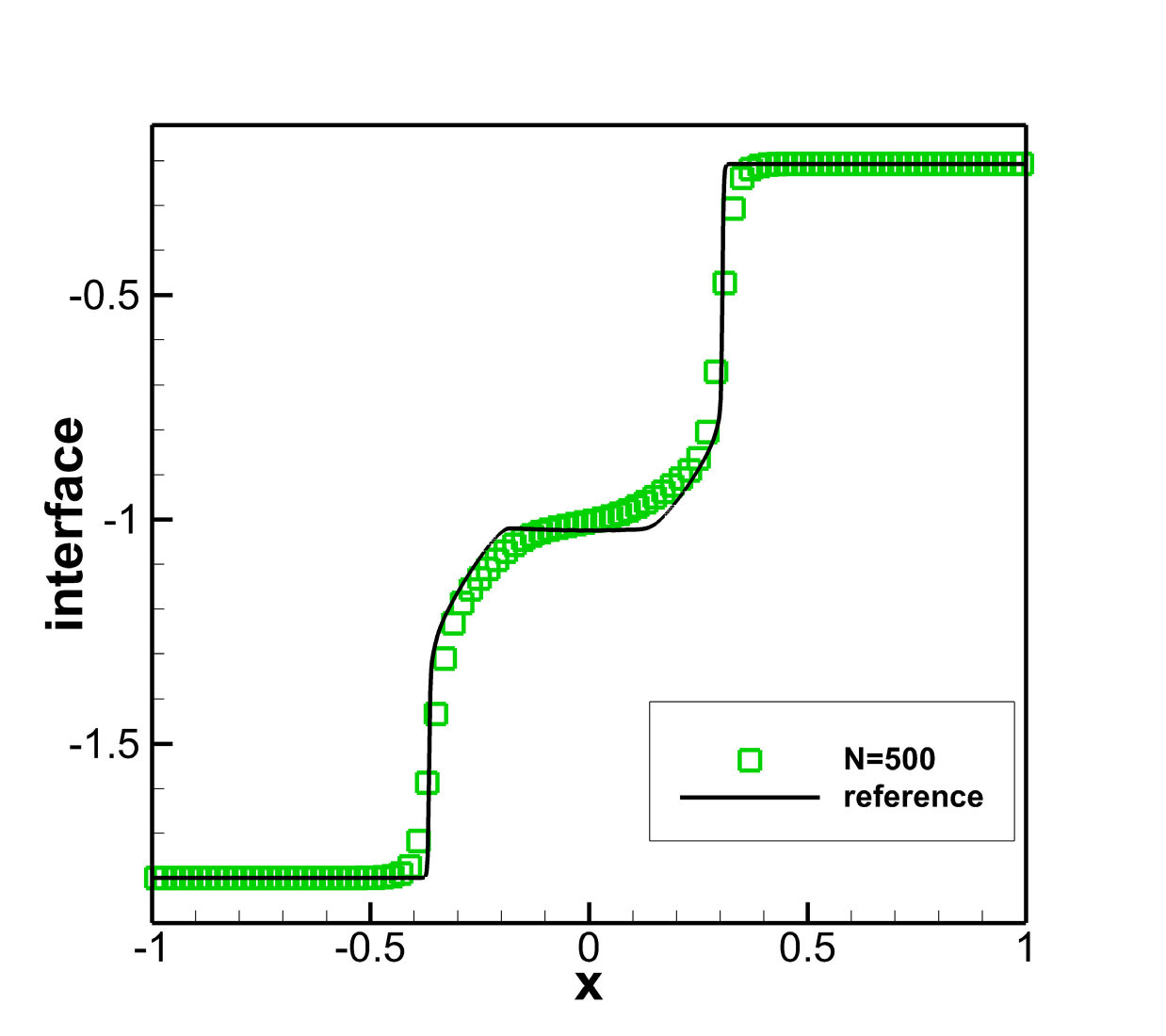}
    \caption{Section \ref{sec:Interface}: Riemann problem 2. The numerical solution obtained with $500$ points is compared to a reference solution computed with $5000$ points. Left: water surface; right: interface.}
    \label{Fig:s1_case2}
\end{figure}

\subsubsection{An internal dam-break with flat bottom}
\label{Test I_internal}
In this test, taken from \cite{Fernandez-Nieto2011}, an internal dam-break over a flat bottom is simulated. The computational domain is $[0, 10]$ with free boundary conditions. The final time is $T=10$. The gravitational constant $g=10$ and the density ratio $r = 0.98$.  The initial condition is
given by:
$$
\left(h_1(x, 0), q_1(x, 0), h_2(x, 0), q_2(x, 0)\right)= \begin{cases}(0.2,0,0.8,0), & \text { if } x<5, \\
(0.8,0,0.2,0), & \text { otherwise} .\end{cases}
$$

 The results computed with a $200$-point mesh are compared in Figure \ref{Fig:s1_An_internal_dam_breaking_1D} with a reference solution computed using $3200$ points: that solutions agree well with those in \cite{Fernandez-Nieto2011, KRVAVICA2018187}. 
\begin{figure}
    \centering
\includegraphics[scale=0.35]{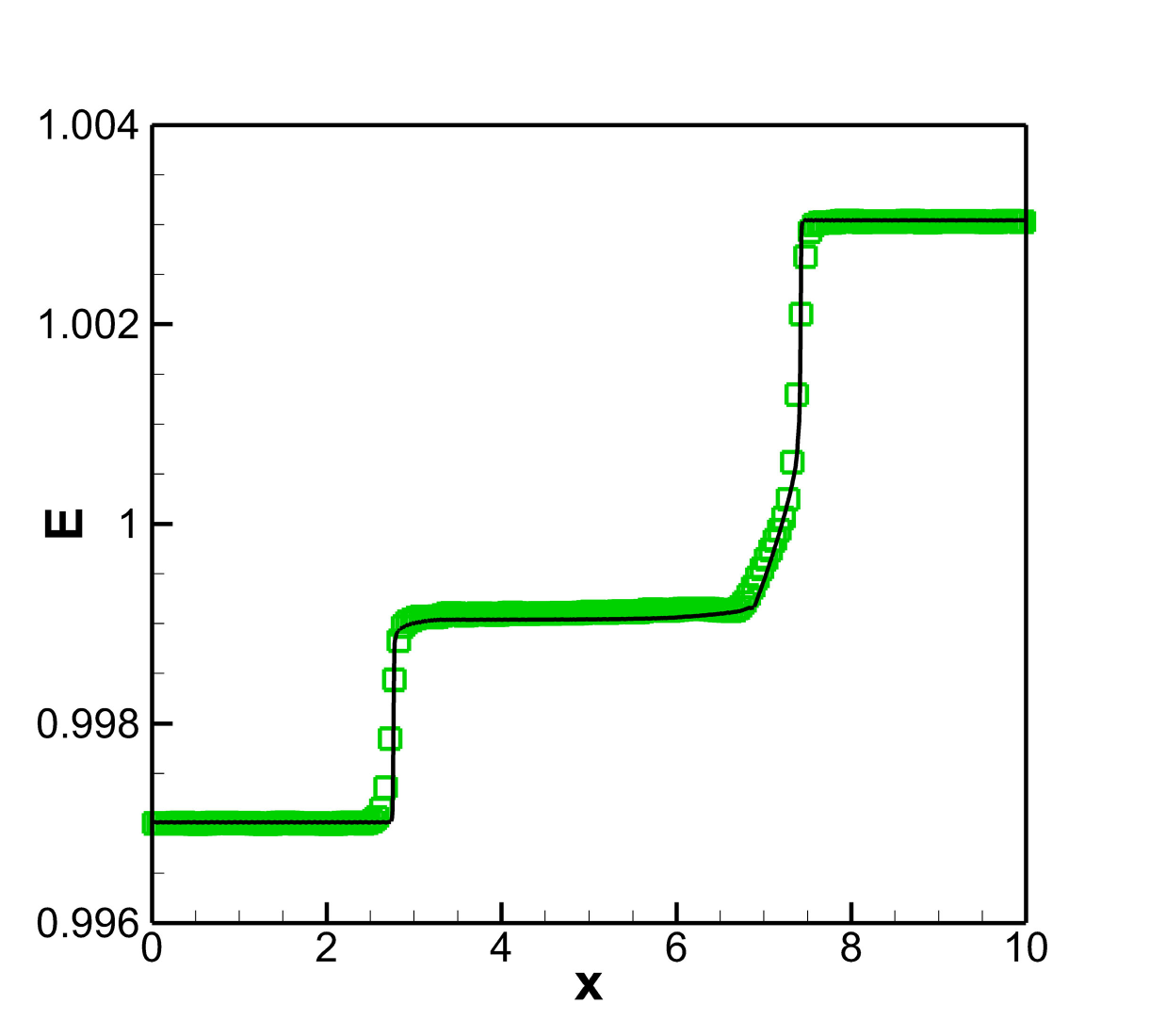}
  \qquad
\includegraphics[scale=0.35]
{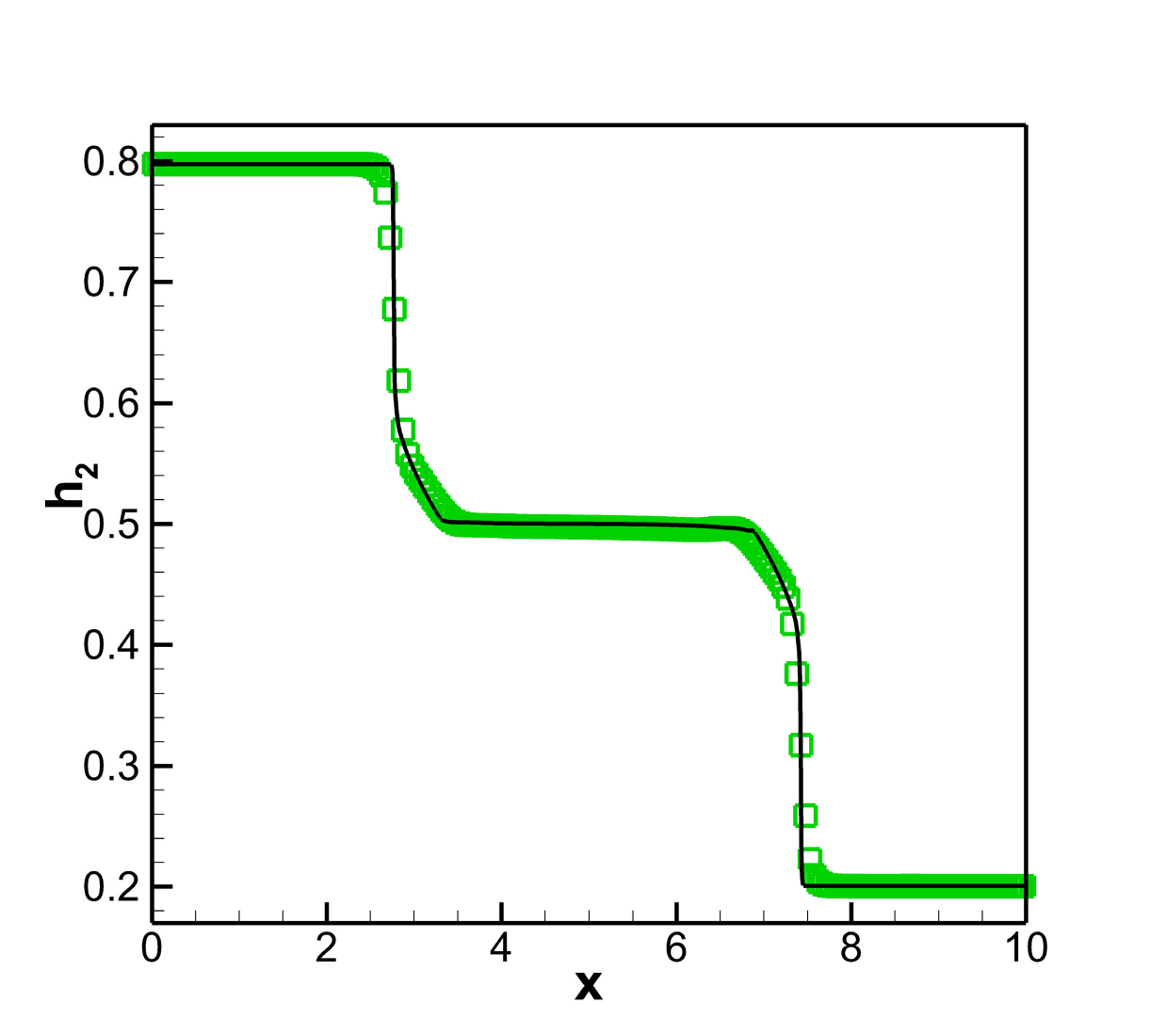}
\includegraphics[scale=0.35]{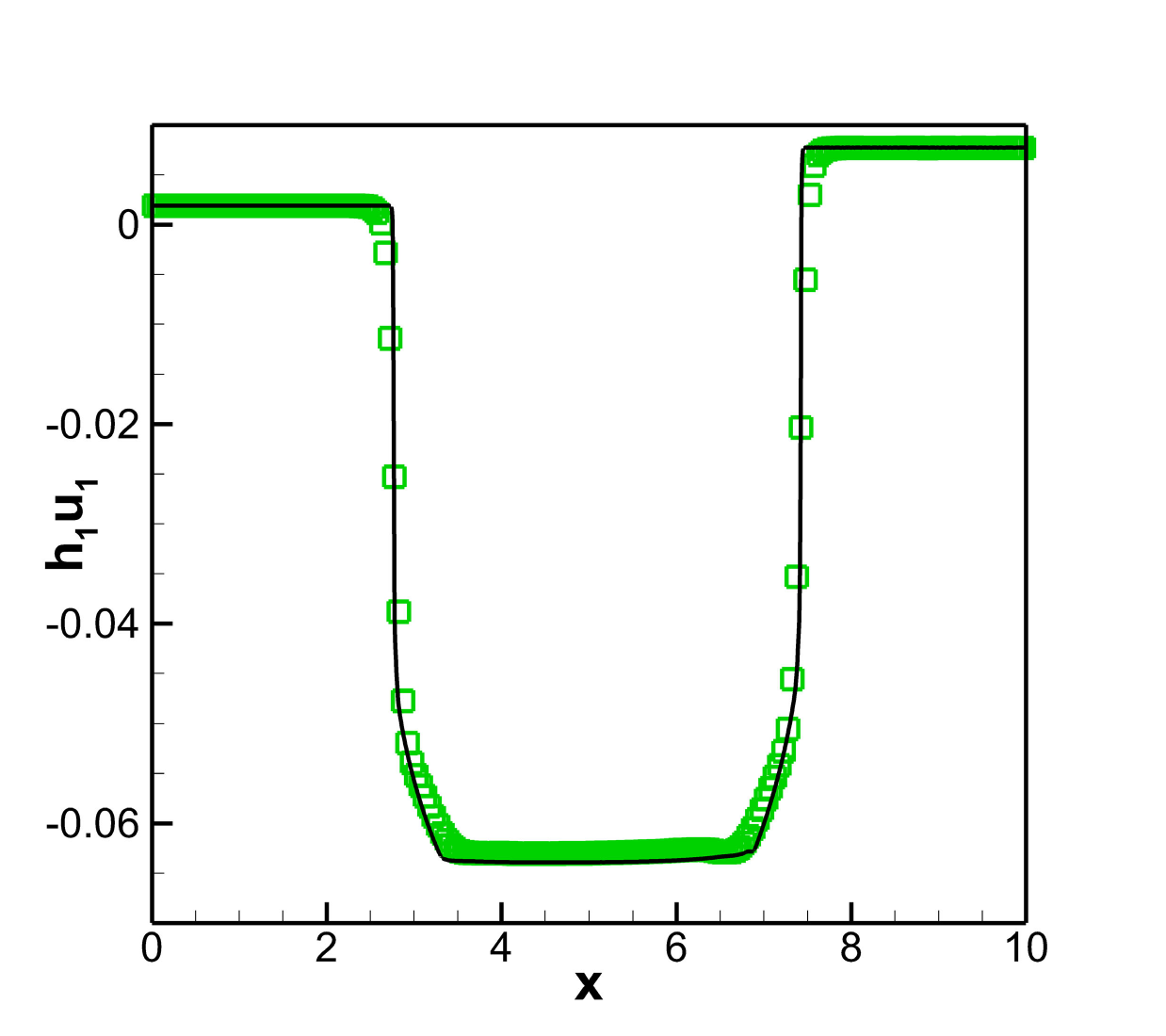}
  \qquad
\includegraphics[scale=0.35]
{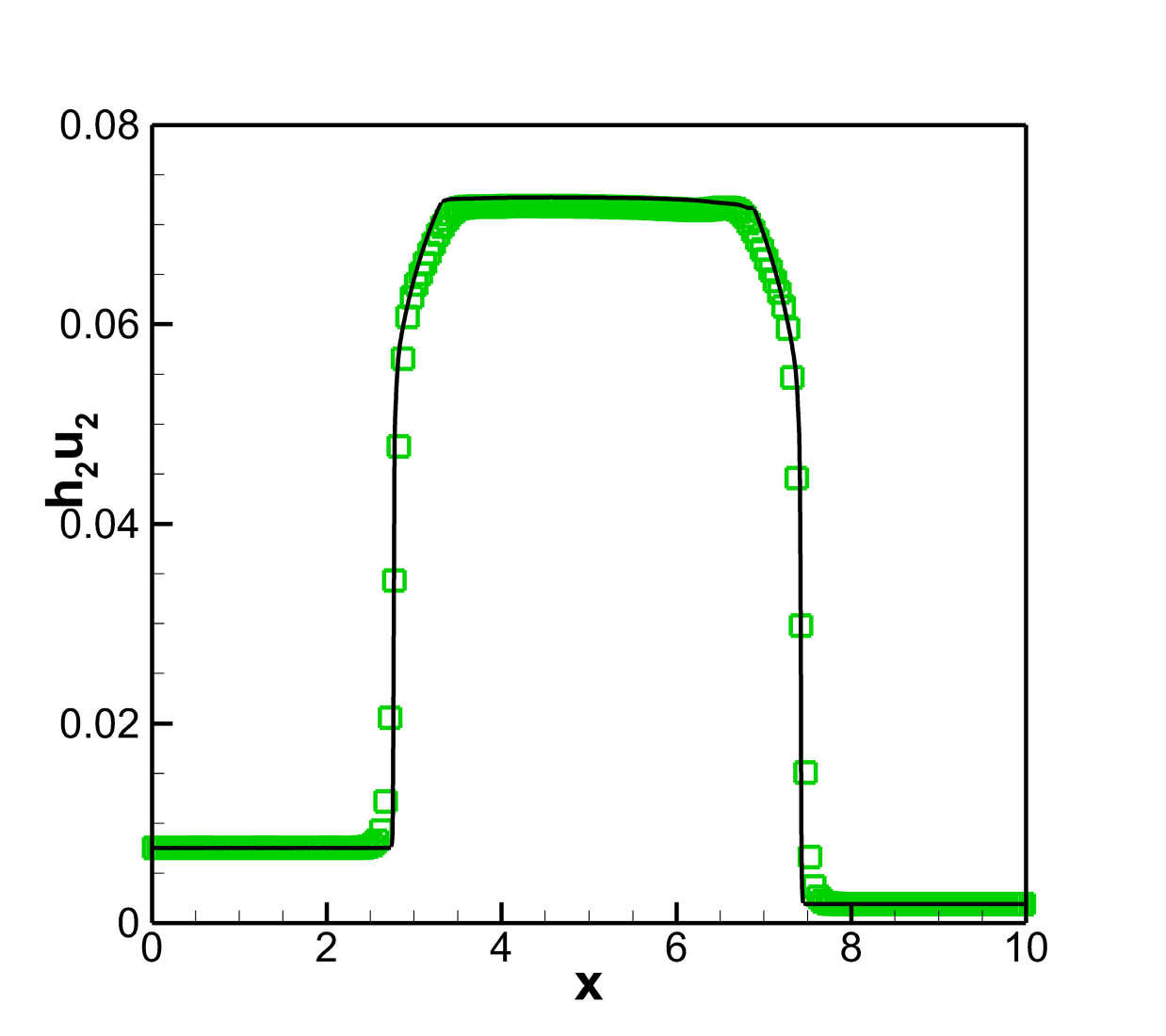}
    \caption{Section \ref{Test I_internal}: internal dam-break with flat bottom. The numerical solution computed with $200$ points is compared to a reference solution computed using $3200$ points: free surface (top-left), $h_2$ (top-right),  $h_1u_1$ (down-left), and $h_2u_2$ (down-right) at time $T = 10$.} \label{Fig:s1_An_internal_dam_breaking_1D}
\end{figure}
\subsubsection{Internal dam-break with non-flat bottom}
\label{sec:internal_nonflat}
In this test, taken from \cite{Chu2022}, a discontinuous stationary solution is reached starting from an internal dam-break initial condition given by
$$
\left(h_1, q_1, h_2+Z(x), q_2\right)(x, 0)= \begin{cases}(1.6,0,-1.6,0) & \text { if } x<0, \\ (0.7,0,-0.7,0) & \text { otherwise, }\end{cases}
$$
and the bottom topography is given by 
$$
Z(x)=0.25 e^{-x^2}-2.
$$
The constant gravitational acceleration is $g=9.81$ and the density ratio is $r=0.998$. The computational domain is  $[-5,5]$. The steady state obtained with $500$ points is shown in Figure \ref{Fig:internal_dam_break_s123}. The converged results agree well with reference solutions computed with $2000$ grids and with those in \cite{Chu2022}.
\begin{figure}[H]
    \centering
\includegraphics[scale=0.4] {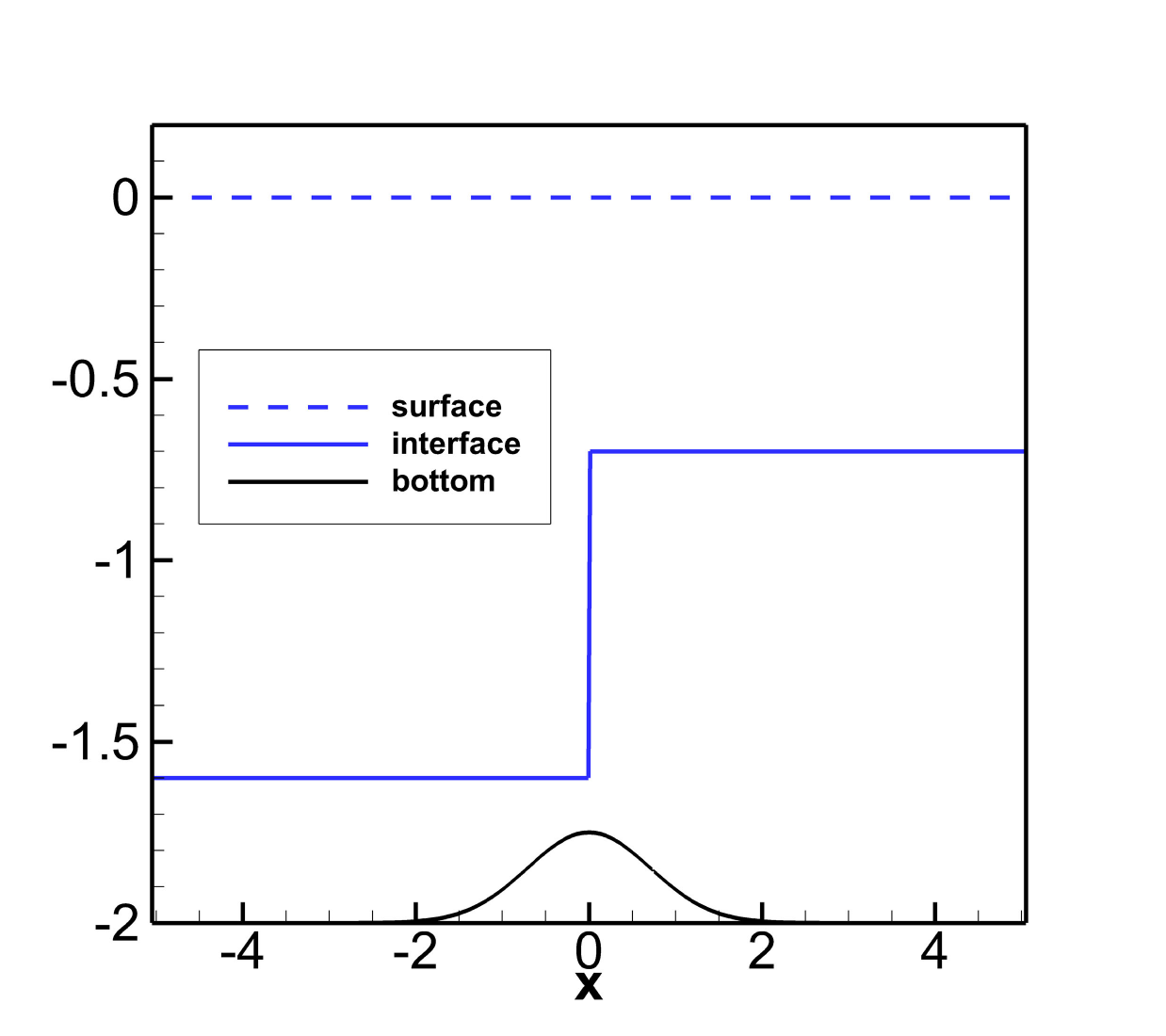}
\includegraphics[scale=0.4] {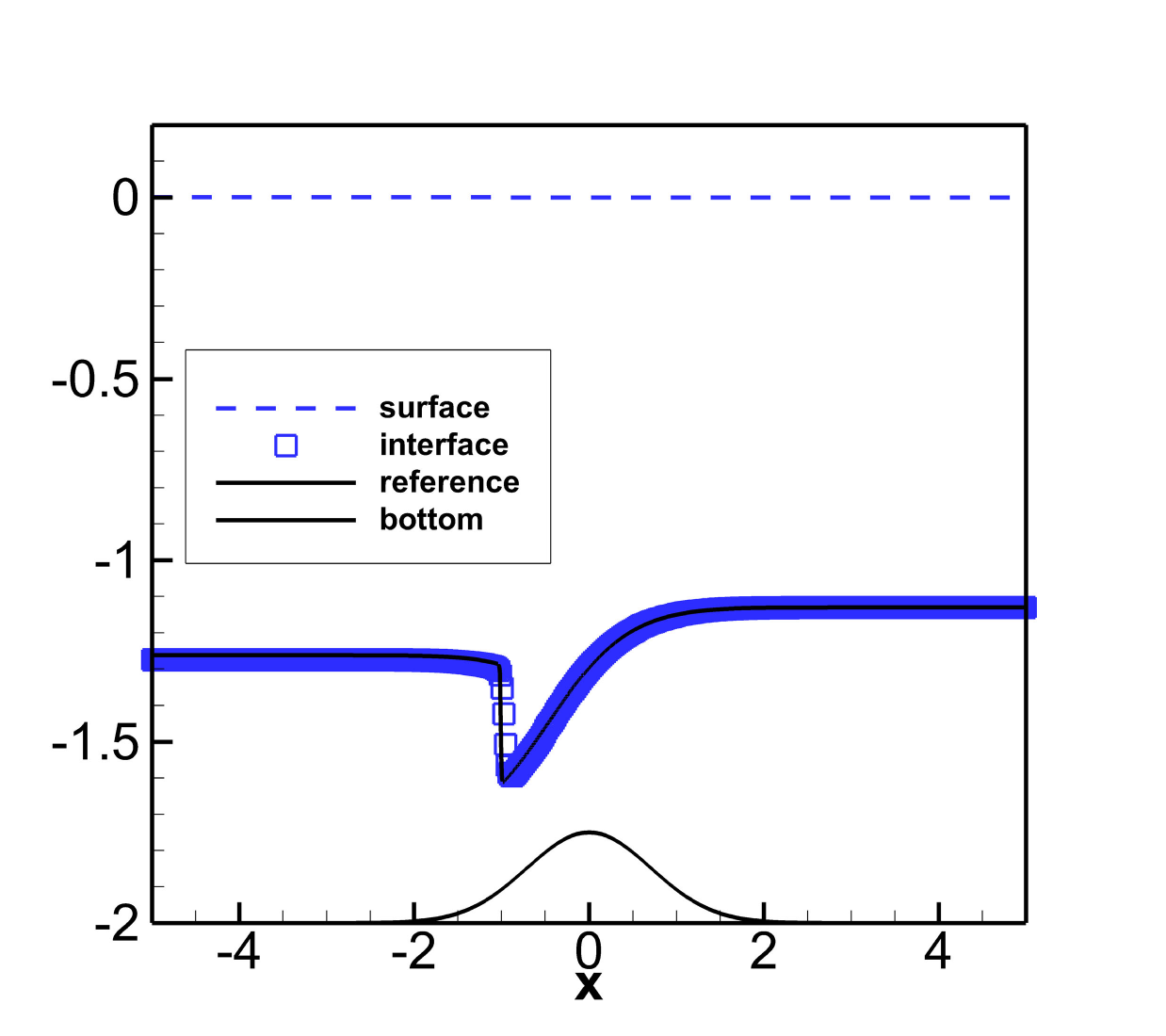}
    \caption{ Section \ref{sec:internal_nonflat}: internal dam-break with non-flat bottom. The numerical solution obtained with $500$ points is compared to a reference solution computed with $2000$ points.  Free surface, interface, and bottom topography: initial condition (left), reached steady state (right). }  \label{Fig:internal_dam_break_s123}
\end{figure}
\subsubsection{Accuracy test}
\label{Sec:order}
In this test, taken from \cite{Chu2022}, we check the empirical order of accuracy of the methods using a smooth solution. The bottom is flat, $g=9.81, r=0.98$, and the initial condition is given by:
$$
\left(h_1(x, 0), q_1(x, 0), h_2(x, 0), q_2(x, 0)\right)=\left(1-\frac{1}{2} \sin (8 x), 0,0.6+\frac{1}{2} \sin (8 x), 0\right) .
$$
 The computational domain is $[-\pi / 2,3 \pi / 2]$ and periodic boundary conditions are considered. We compute the numerical solutions until $T=0.1$.
  We use 5th, and 7th order WENO-Z for this smooth problem to check the numerical accuracy. The reference solutions for 5th are computed by each order WENO scheme with 6400 uniform grids. We use 12800 grids to obtain the reference solution for the 7th order accuracy test to get a more accurate reference. The third-order TVD Runge Kutta is used in all cases for time stepping and, in order to reach the same order of accuracy in time and space, the time-step is set to  $\Delta t^{\frac{k}{3}}$, where $k$ is the WENO order and $\Delta t$ is the step given by the CFL condition. 
 
 Tables \ref{Tab:Accuracy3} show the $L^\infty$ error and the empirical order of convergence: in all cases, the optimal order is reached.  
 
\begin{table}[htbp]
\caption{Section \ref{Sec:order}. $L^\infty$ error and convergence rates using WENO3, WENO5, and WENO7}.
\label{Tab:Accuracy3}
\begin{center}
\begin{tabular}{ccccccccccc}\hline
N
&\multicolumn{2}{c}{3th order test} 
&\multicolumn{2}{c}{5th order test}  
&\multicolumn{2}{c}{7th order test}\\
\cline{2-3}
\cline{4-5}
\cline{6-7}
&$h_1$ 
&$h_1+h_2$ 
&$h_1$ 
&$h_1+h_2$  
&$h_1$ 
&$h_1+h_2$ 
\\  \hline
25
&3.32e-1\;\;\;---\;\;\;  &5.84e-3\;\;\;---\;\;\;   
&5.38e-2\;\;\;---\;\;\; 
&1.42e-3\;\;\;---\;\;\; 
&1.62e-1\;\;\;---\;\;\; 
&3.37e-3\;\;\;---\;\;\;  \\
50  
&1.10e-2 (1.60)&2.10e-3 (1.48)    
&4.65e-3 (3.53)
&4.50e-4 (1.66)
&1.20e-2 (3.76)  
&6.26e-4 (2.43)\\
100    
&2.08e-2 (2.40)  &4.96e-4 (2.08)
&4.84e-4 (3.26)
&1.42e-5 (4.98) 
&9.58e-4 (3.65)  
&1.82e-5 (5.10)\\
200     
&3.19e-3 (2.71) &7.43e-5 (2.74) 
&1.95e-5 (4.63)
&5.52e-7 (4.69)
&1.94e-5 (5.63)       
&1.82e-7 (6.64)\\
400      
&4.24e-4 (2.91) &9.75e-6 (2.93) 
&5.92e-7 (5.04)
&1.74e-8 (4.99) 
&2.04e-7 (6.57)  
&1.49e-9 (6.94)\\
800     
&5.31e-5 (3.00) &1.23e-6 (2.99)
&1.76e-8 (5.07)
&4.33e-10 (5.33)
&1.64e-9 (6.96)& 1.01e-11(7.20)\\
\hline
\end{tabular}
\end{center}
\end{table}
   
\subsection{2D two-layer shallow water model}
\subsubsection{2-D steady-state solution}
\label{sec:steady-state-2d}
This test is used to check the well-balanced property of the methods for water-at-rest solutions. The constant density ratio is $r = 0.98$ and the gravitational constant is
$g = 10$. The bottom topography is given by a smooth function
$$
Z(x, y)=0.05 e^{-100\left(x^2+y^2\right)}-1,
$$
and the initial condition is given by
$$
h_1 = 0.5, \; h_2  = 1 - Z(x,y), \; u_{1,1} = u_{1,2} = u_{2,1} = u_{2,2} = 0.
$$
The computational domain is $[-1,1] \times [-1,1]$. The final time is $T=0.1$. This initial boundary value problem is a modification of
the exact C-property test, proposed in \cite{xing2005high} for the shallow water equations.
The numerical $L^1$ errors corresponding to two meshes of $50 \times 50$ and $100 \times 100$ points are shown in Table \ref{Tab:error_WB_2d}, as it can be seen the initial condition is preserved to machine accuracy.

We have used this test to compare the computational costs of Methods 1 and 2: the CPU times corresponding to a computation with $200 \times 200$ points are shown in Table \ref{Tab:time_WB_2d}. The computational costs of Strategy 1 and 2 are comparable while Strategy 1 is slightly more computationally expensive.
 
\begin{table}[htbp]
\caption{Section \ref{sec:steady-state-2d}. $L^1$ errors at time $0.1$ using two meshes of $50 \times 50$ and $100 \times 100$ points.}
\label{Tab:error_WB_2d}
\begin{center}
\begin{tabular}{cccccccc} \hline
 \multicolumn{1}{c}{}
 &\multicolumn{1}{c}{$h_1$} & {$h_1u_1$} & {$h_1v_1$}
 &\multicolumn{1}{c}{}
 &\multicolumn{1}{c}{$h_2$} & {$h_2u_1$} & {$h_2v_2$}
 \\  
 \cline{1-8}
 $50\times 50$  &7.77e-16  &1.36e-16 &2.79e-16& &9.81e-16 &9.78e-15  &1.36e-16\\
 $100\times 100$ &1.98e-15  &1.88e-16 &5.71e-16& &1.44e-15 &1.32e-14  &1.88e-16 \\
 \hline
 \end{tabular}
 \end{center}
\end{table}
\begin{table}[htbp]
\caption{Section \ref{sec:steady-state-2d}. Methods 1 and 2: CPU times and speedup}.
 \label{Tab:time_WB_2d}
\begin{center}
\begin{tabular}{ccc} \hline
& CPU time & Speedup ratio\\
\hline
Method 1 & 68.38 s & \\
Method 2 & 63.54 s &1.08 \\
 \hline
\end{tabular}
\end{center}
\end{table}

\subsubsection{Interface propagation with flat bottom}
\label{subsec:Interface_flat_2d}
This test, taken from \cite{Kurganov2009}, is aimed to verify the robustness of the numerical method.
The initial condition is
$$
\left(h_1, h_1u_1, h_1v_1, h_2, h_2u_2, h_2v_2\right)(x, y, 0)= \begin{cases}(0.50,1.250,1.250,0.50,1.250,1 .250), & \text { if }(x, y) \in \Omega, \\ (0.45,1.125,1.125,0.55,1.375,1.375), & \text { otherwise,}\end{cases}
$$
where $\Omega=\{x<-0.5, y<0\} \cup\left\{(x+0.5)^2+(y+0.5)^2<0.25\right\} \cup\{x<0, y<-0.5\}$. The constant gravitational acceleration is $g=10$ and the density ratio is $r=0.98$. The flat bottom topography is given by $Z(x, y) \equiv -1$. The computational domain is $[-0.55,0.7]\times[-0.55,0.7]$ and the final time, $T=0.1$. The computational results obtained with two meshes of $400\times 400$ and $800\times 800$ are shown in Figure \ref{Fig:interface_propagation_in_2D_with_flat_bottom}.
\begin{figure}
    \centering
 \includegraphics[scale=0.38]{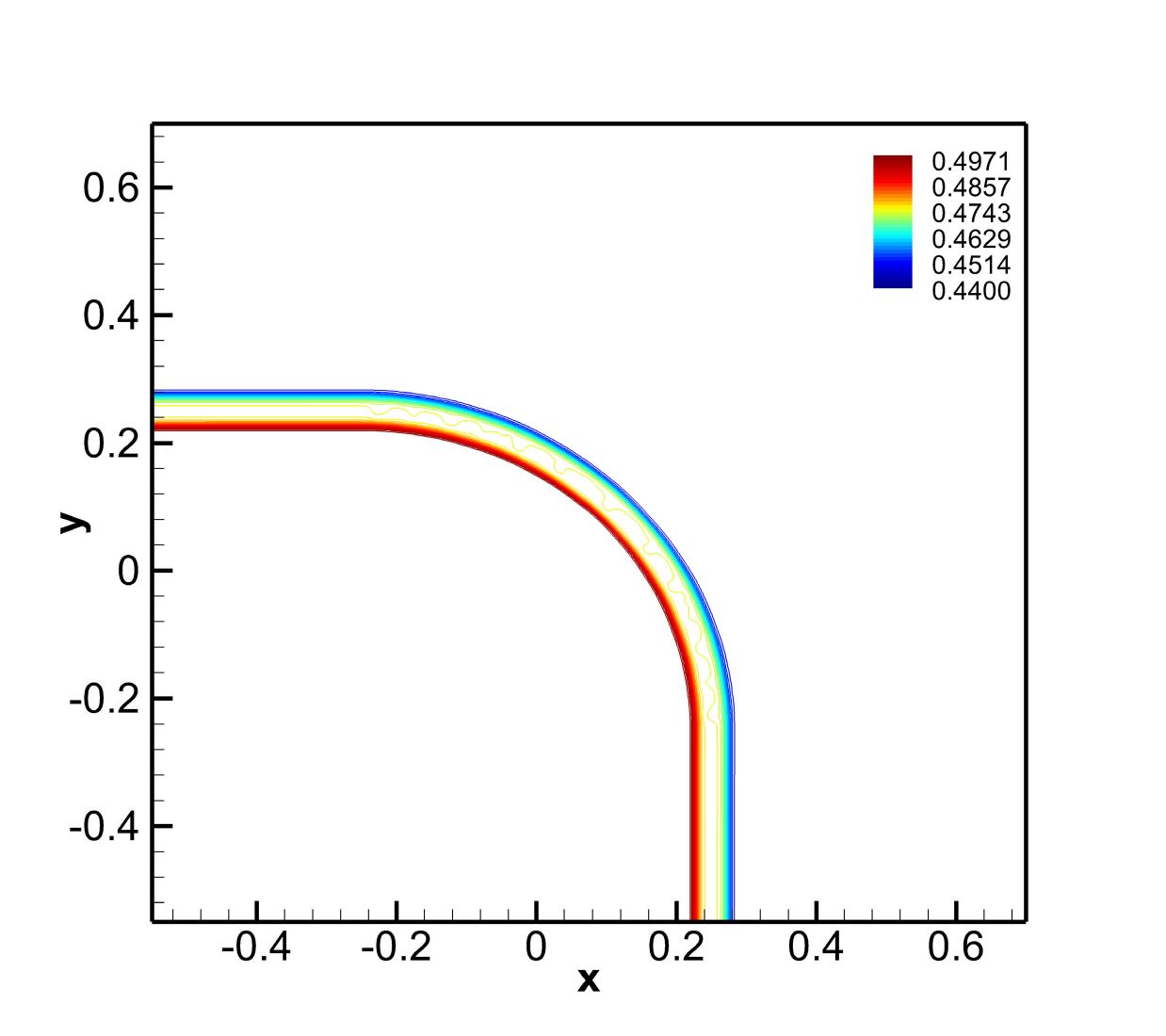}
  \qquad
\includegraphics[scale=0.38]{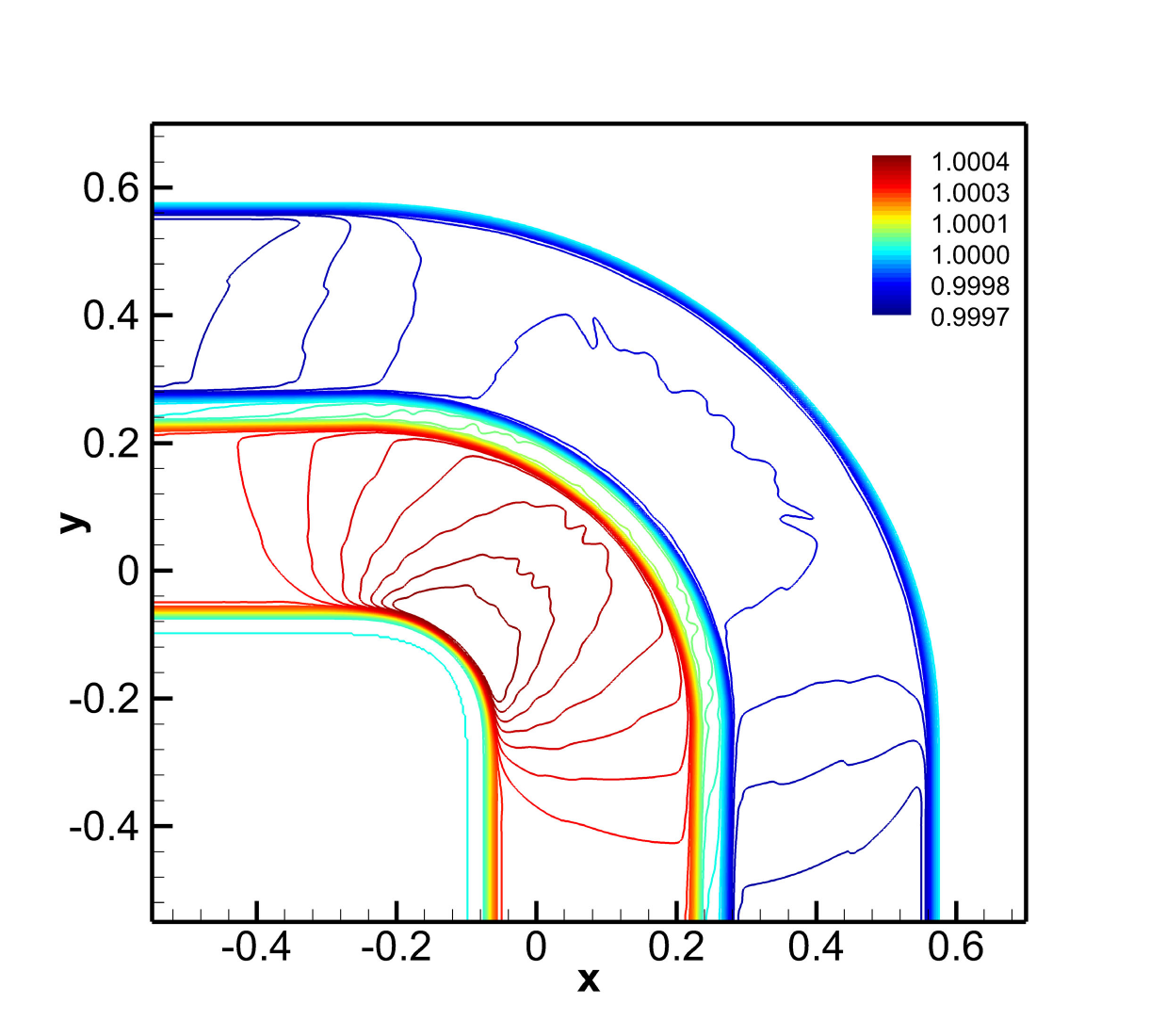}\\
\includegraphics[scale=0.38]{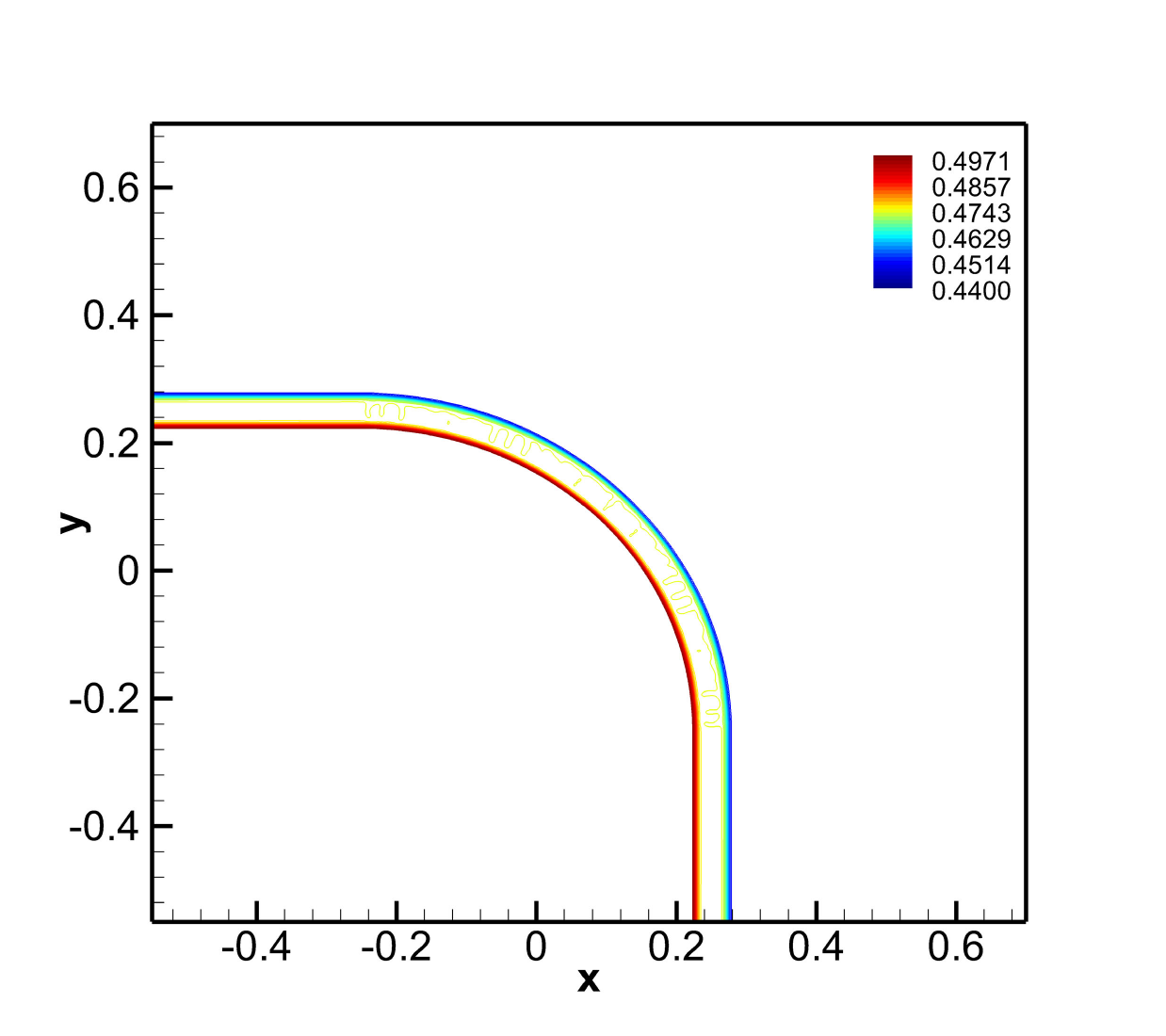}
  \qquad
\includegraphics[scale=0.38]{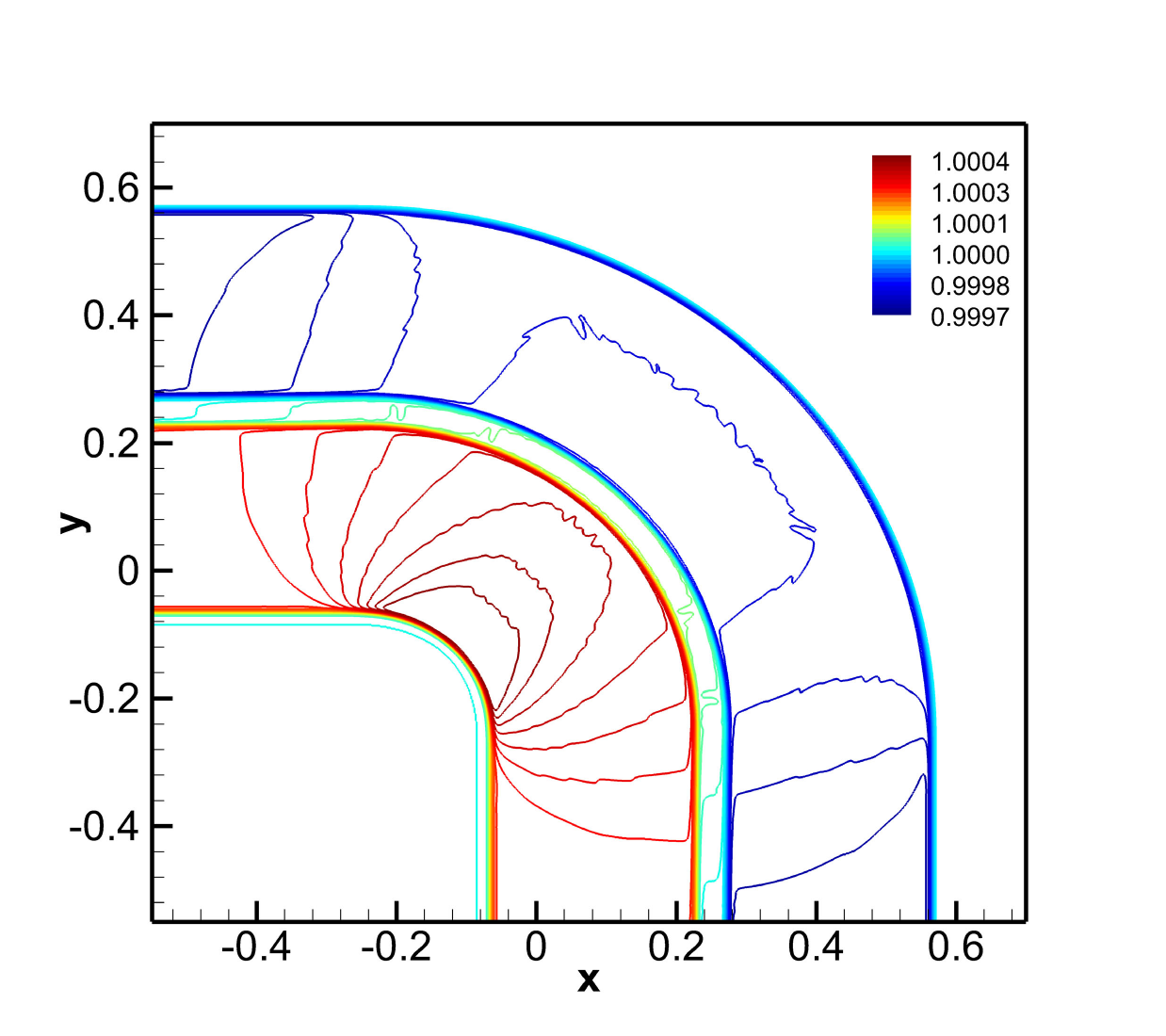}
    \caption{Section \ref{subsec:Interface_flat_2d}: interface propagation in 2D with flat bottom. Numerical solutions obtained with $400\times 400$ (top row) and $800\times 800$ (bottom row) grids: upper layer thickness $h_1$ (left) and water surface $ h_1 + h_2 $ (right).}
\label{Fig:interface_propagation_in_2D_with_flat_bottom}
\end{figure}
\subsubsection{Interface propagation with non-flat bottom}
\label{subsec:Interface_nonflat_2d}
This test is similar to the one in Section \ref{subsec:Interface_flat_2d} but with a non-flat bottom given by
$$
Z(x, y)=0.05 e^{-100\left(x^2+y^2\right)}-1,
$$
and the following initial data:
$$
\left(h_1, h_1u_1, h_1v_1, h_2, h_2u_2, h_2v_2\right)(x, y, 0)= \begin{cases}(0.50,1.250,1.250,0.50-Z,1.250,1.250), & \text { if }(x, y) \in \Omega, \\ (0.45,1.125,1.125,0.55-Z,1.375,1.375), & \text { otherwise. }\end{cases}
$$
The computational domain, the values of $g$ and $r$, and the final time are the same.  We show again the results computed with $400\times 400$ and $800\times 800$ points in Figure \ref{Fig:interface_propagation_in_2D_with_nonflat_bottom_robust}. The numerical solutions are in good agreement with those
presented in \cite{Liu2021} but finer structures of the flow are actually captured here.
\begin{figure}
    \centering
 \includegraphics[scale=0.38]{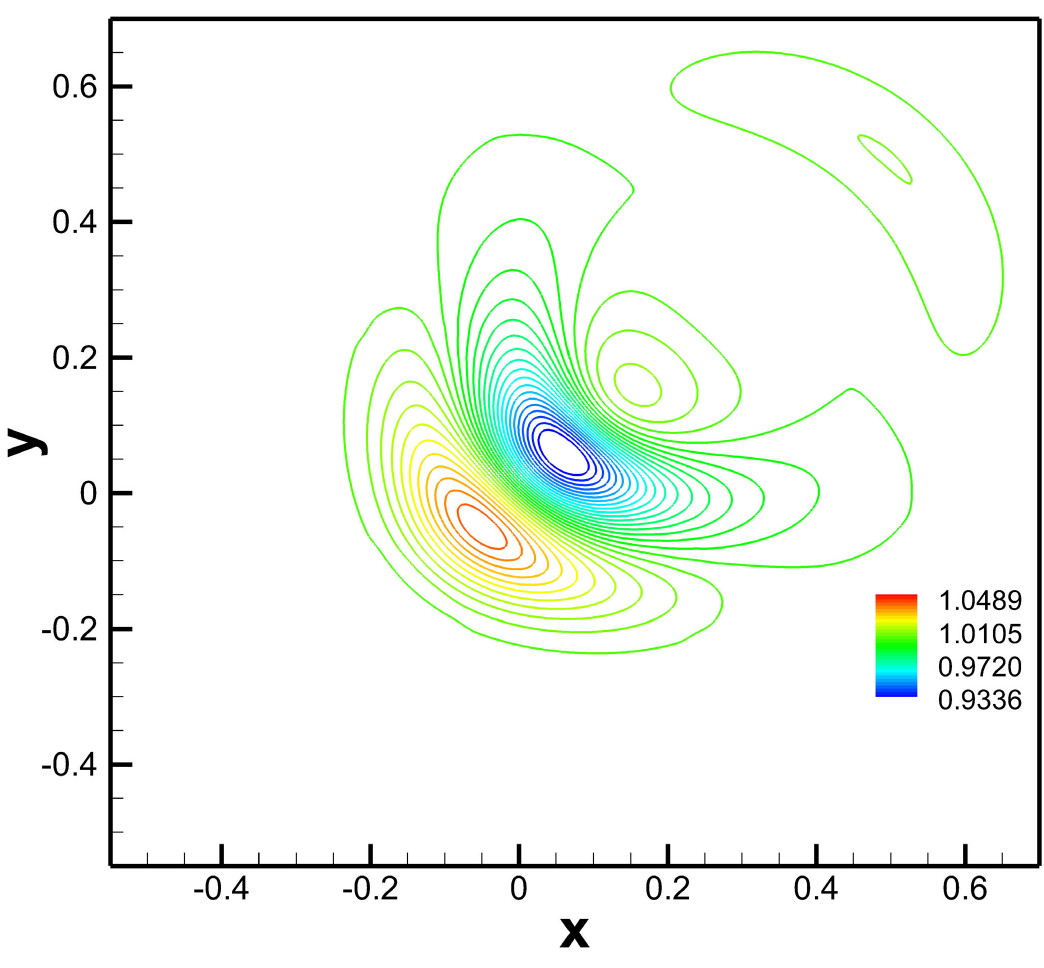} 
  \qquad
\includegraphics[scale=0.38]{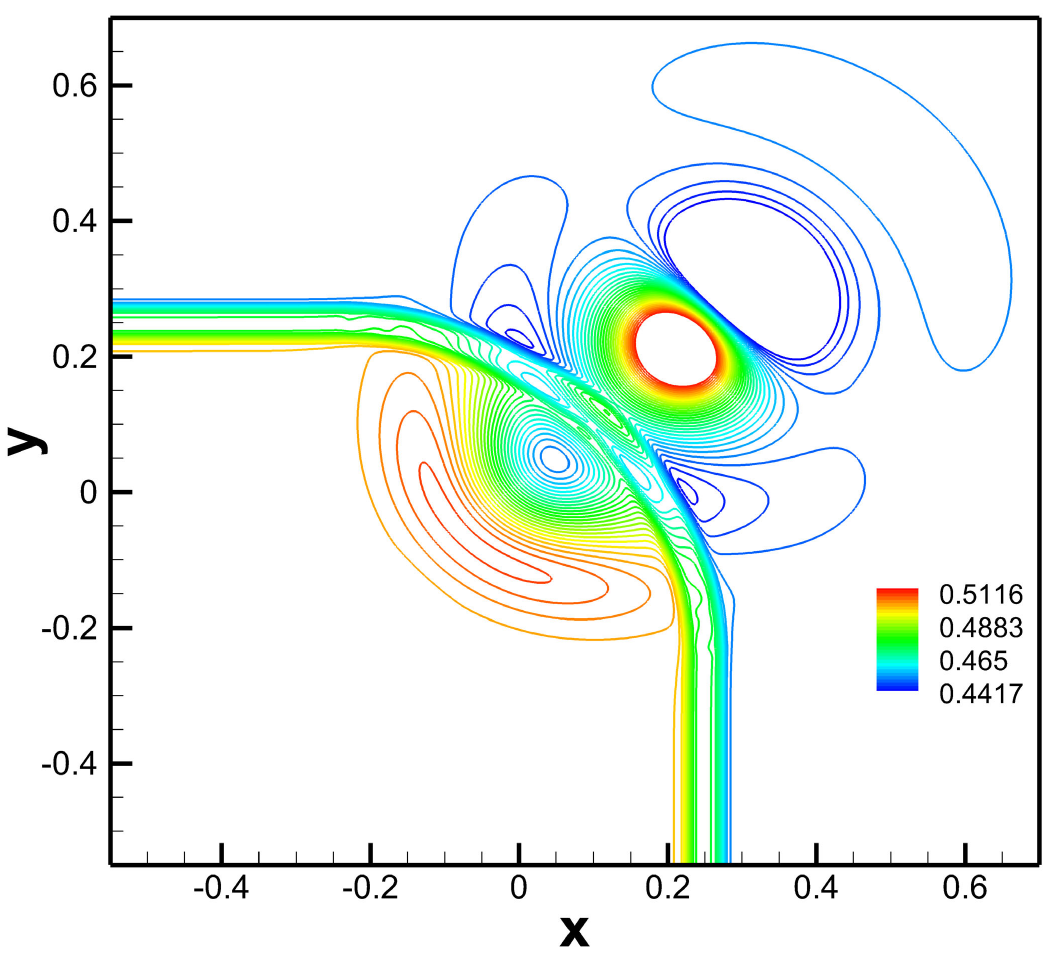}\\ 
\includegraphics[scale=0.38]{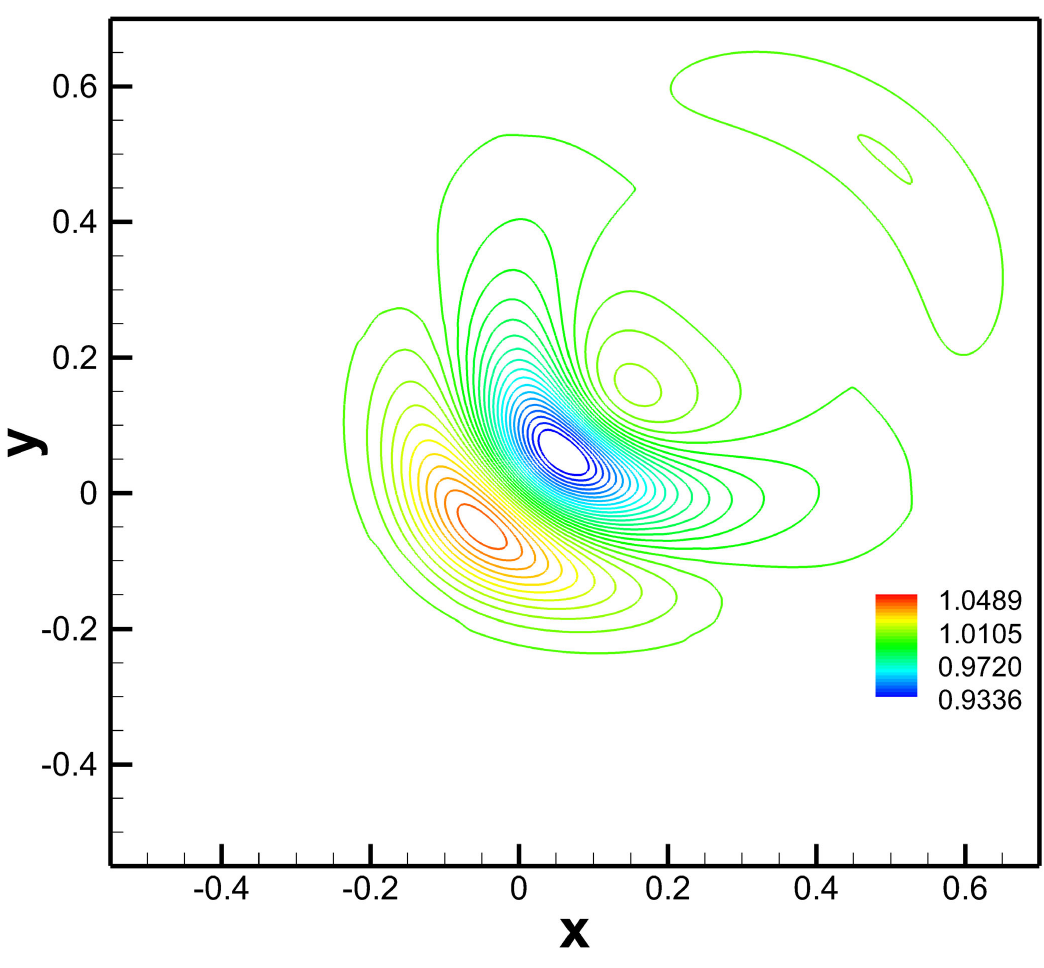}
  \qquad
\includegraphics[scale=0.38]{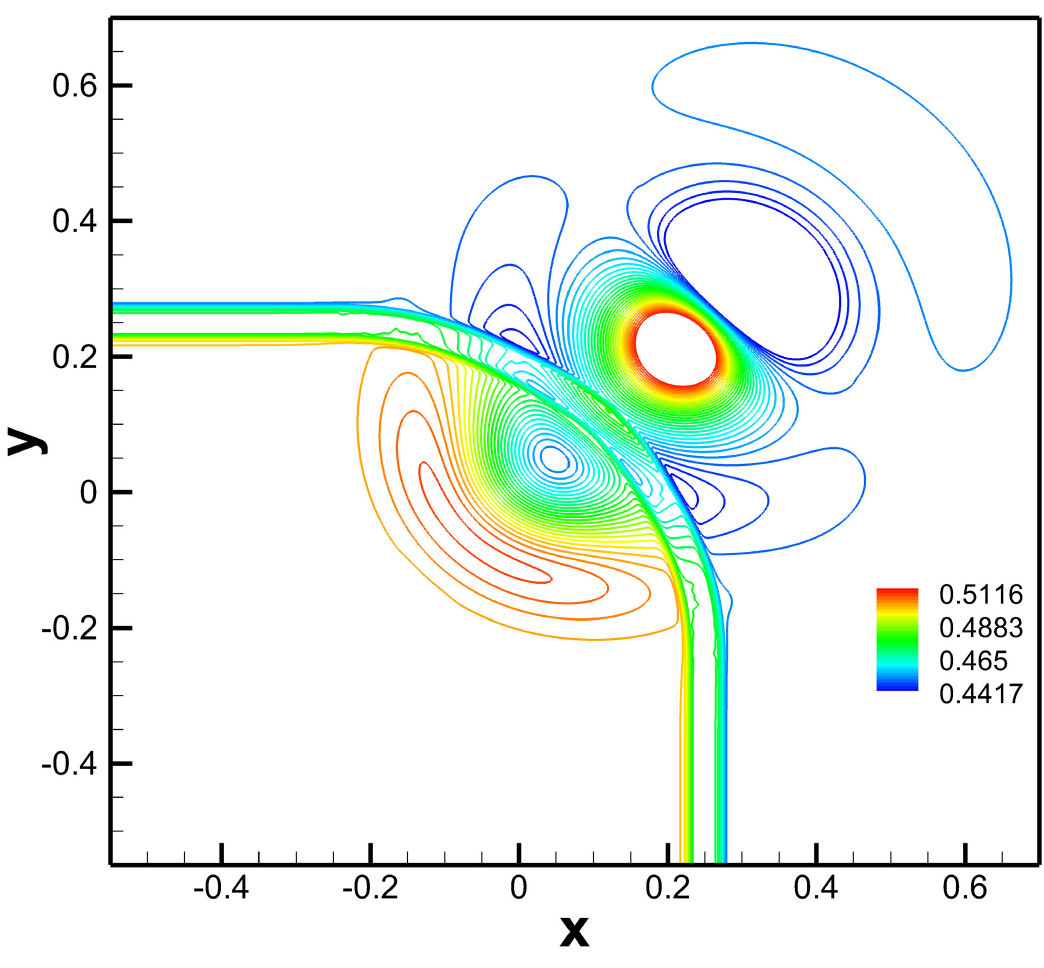}
    \caption{Section \ref{subsec:Interface_nonflat_2d}: interface propagation in 2-D with nonflat bottom. Numerical solutions computed with $400 \times 400$ (top row) and $800 \times 800$ (bottom row) grids: water surface $h_1 + h_2 + Z$ (left) and upper layer thickness $h_1$ (right).}  \label{Fig:interface_propagation_in_2D_with_nonflat_bottom_robust}
\end{figure}
\subsubsection{Internal circular dam break over flat bottom topography}
\label{subsec:Internal_Circular_2d}
This example is taken from \cite{Castro2009}: an internal circular dam-break problem with a flat bottom is considered. The initial conditions are given by
$$
\left(h_1, q_1, p_1, h_2, q_2, p_2\right)(x, y, 0)= \begin{cases}(1.8,0,0,-1.8-Z,0,0), & \text { if } x^2+y^2>4, \\ (0.2,0,0,-0.2-Z,0,0), & \text { otherwise. }\end{cases}
$$
In this test, we consider $g=9.81$, $r=0.998$, and  $Z \equiv -2$. The final time is $T=20$.
We show the contour lines of the water interface for times $t = 4, 20$ in Figure \ref{Fig:circular_bottom_interface_2d}.
Diagonal slices $y = x$ of the numerical results computed with $200\times 200$ points for times $t = 4, 6, 10, 14, 16, 20$ are
shown in Figure \ref{Fig:internal_circular_bottom_interface}. 
It is worth mentioning that the results agree well with those in \cite{Castro2009, Chu2022}.
\begin{figure}
    \centering
 \includegraphics[scale=0.35]{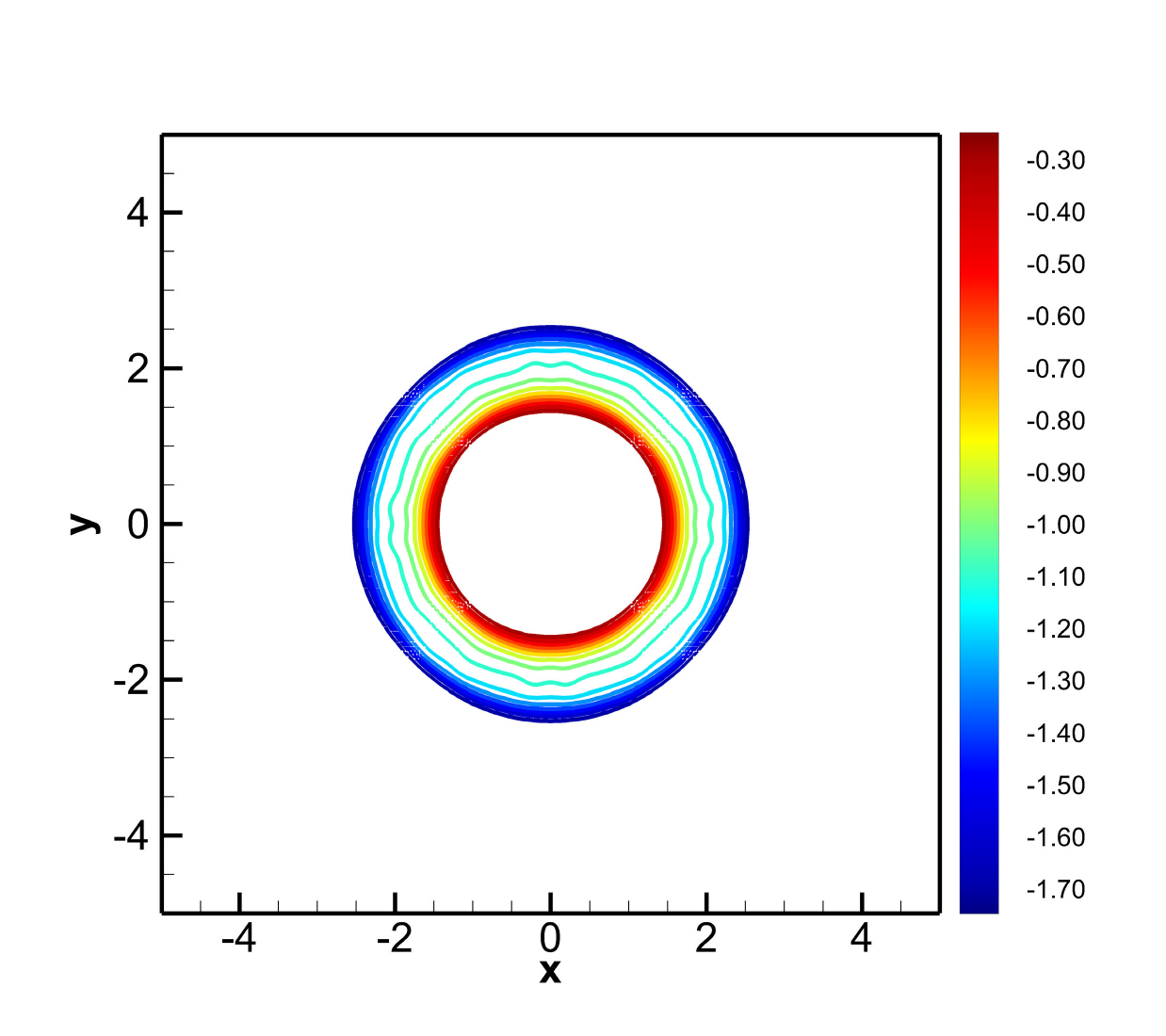}
  \qquad
\includegraphics[scale=0.35]{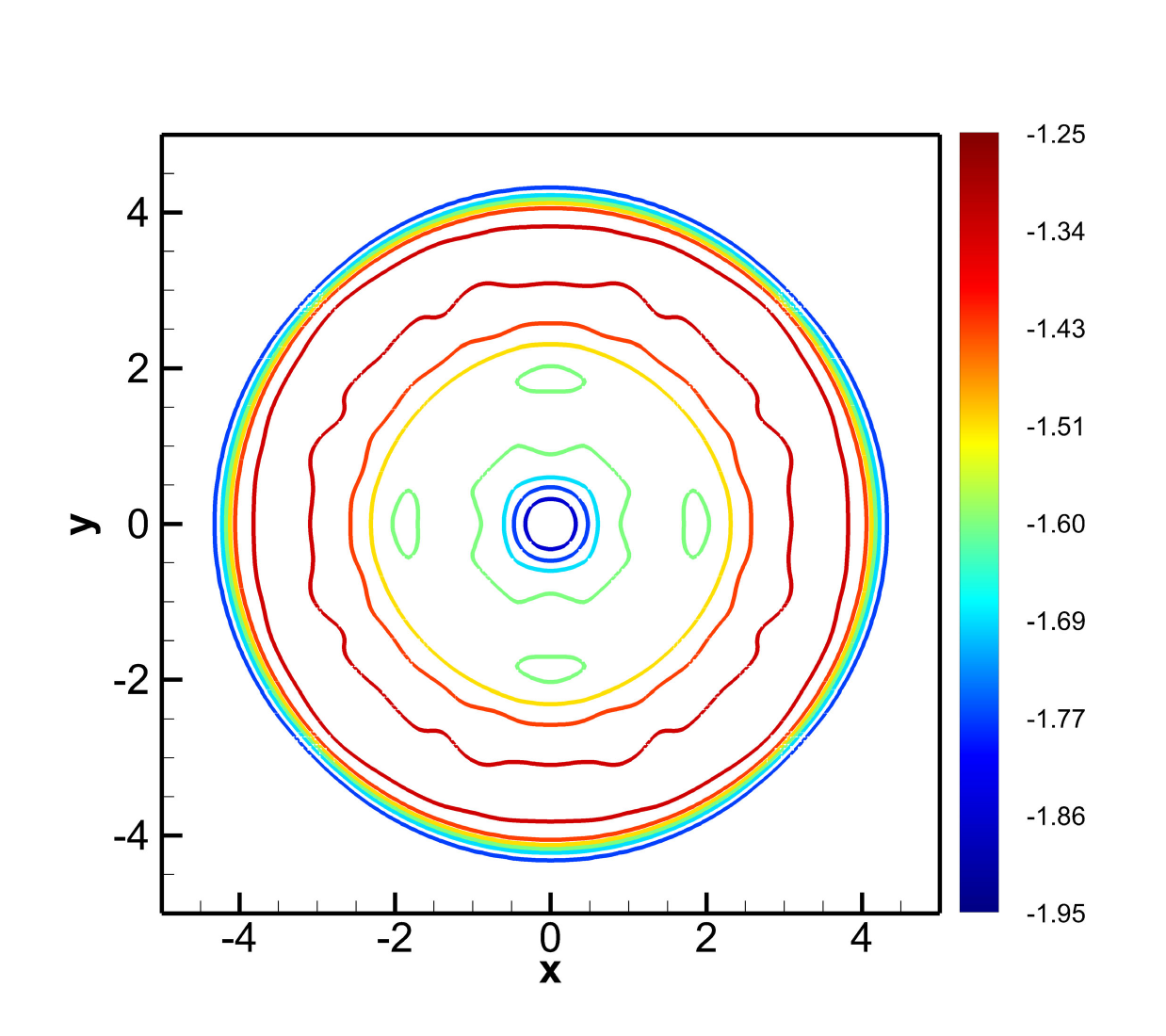}
\caption{Section \ref{subsec:Internal_Circular_2d}: internal circular dam breaking in 2D with a flat bottom.  Contour lines of the interface $h_2+Z$ at times $t=4$(left), $t=20$(right). }   \label{Fig:circular_bottom_interface_2d}
\end{figure}
\begin{figure}
    \centering
 \includegraphics[scale=0.35]{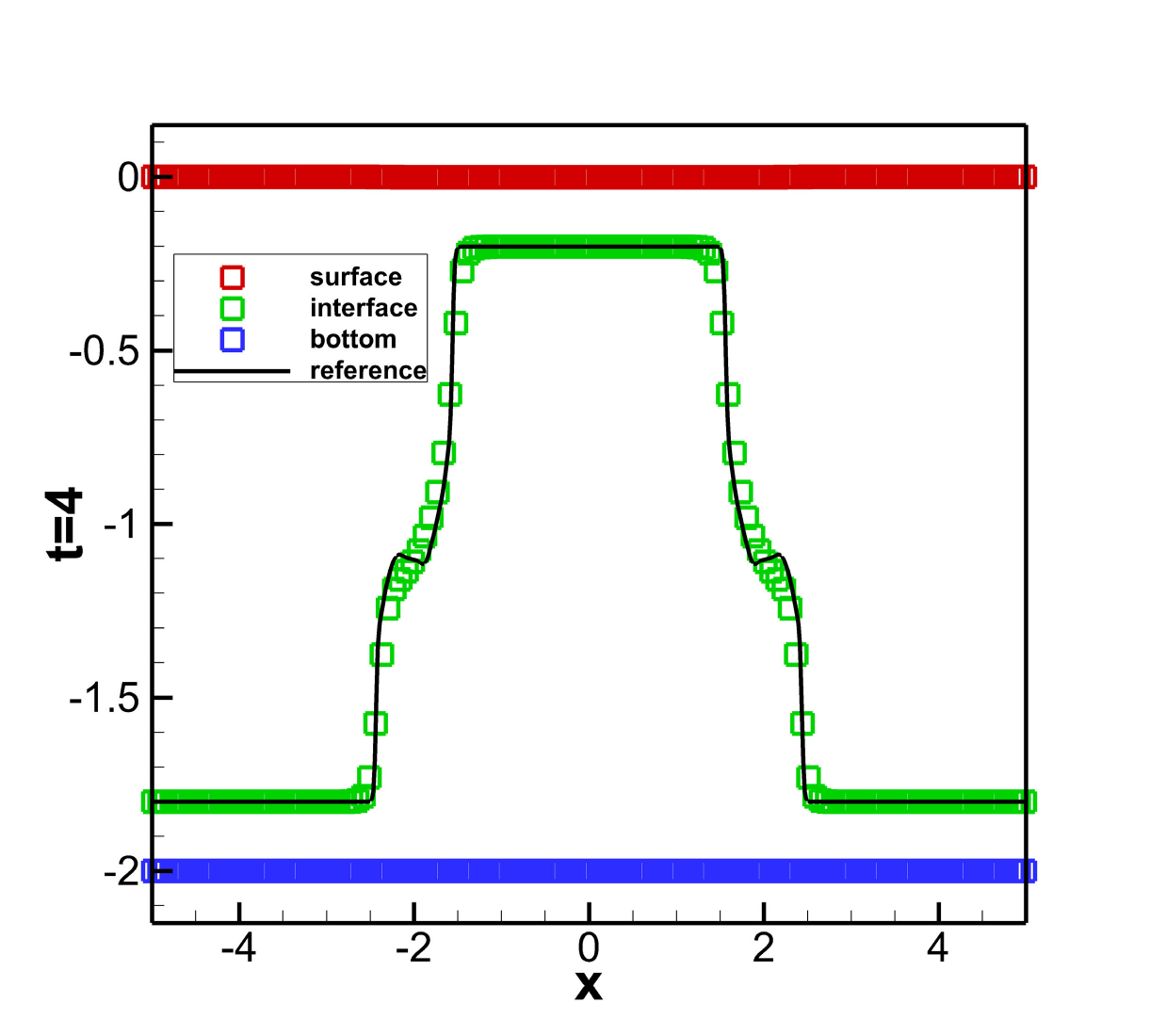}
  \qquad
\includegraphics[scale=0.35]{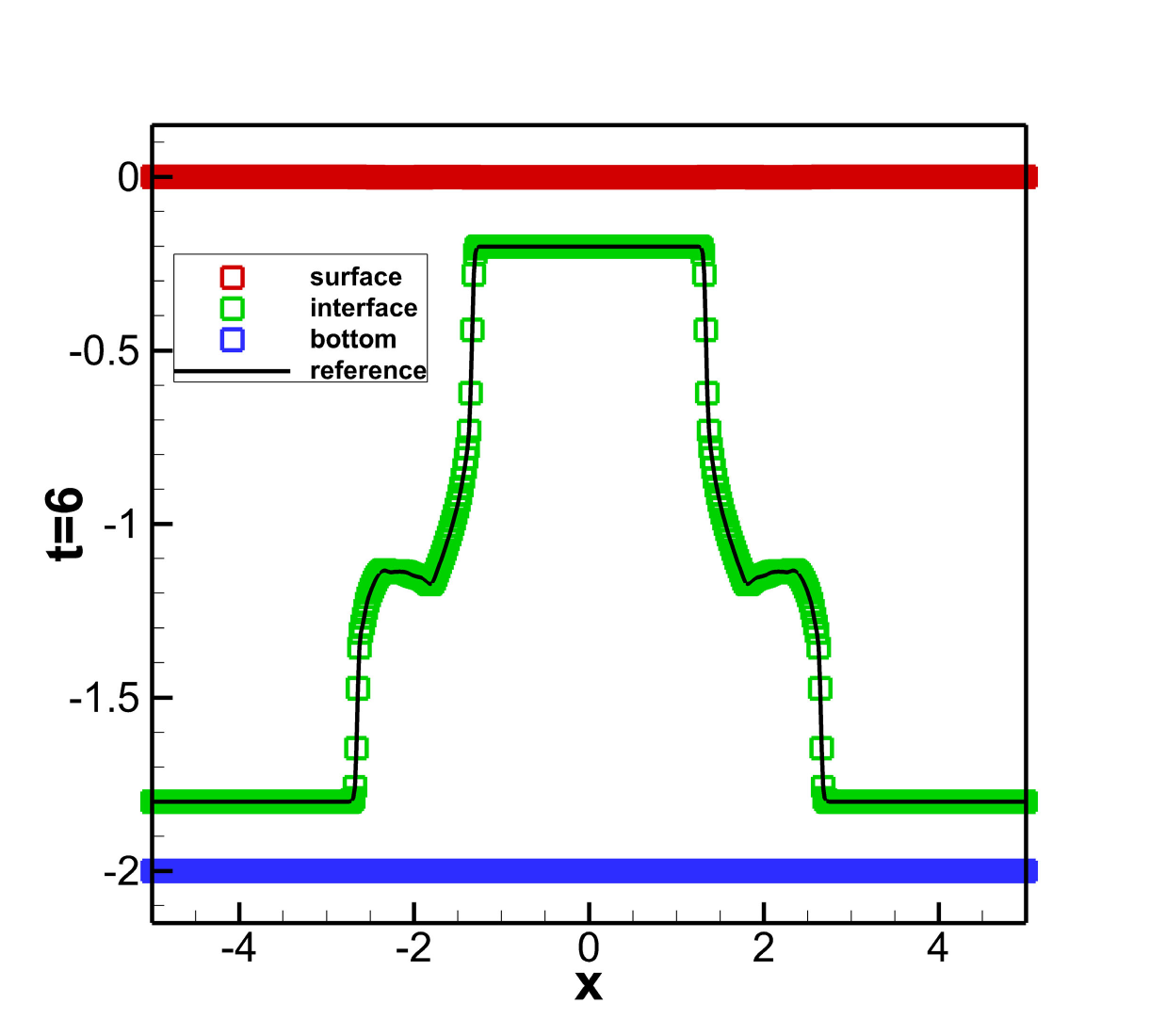} \\
\includegraphics[scale=0.35]{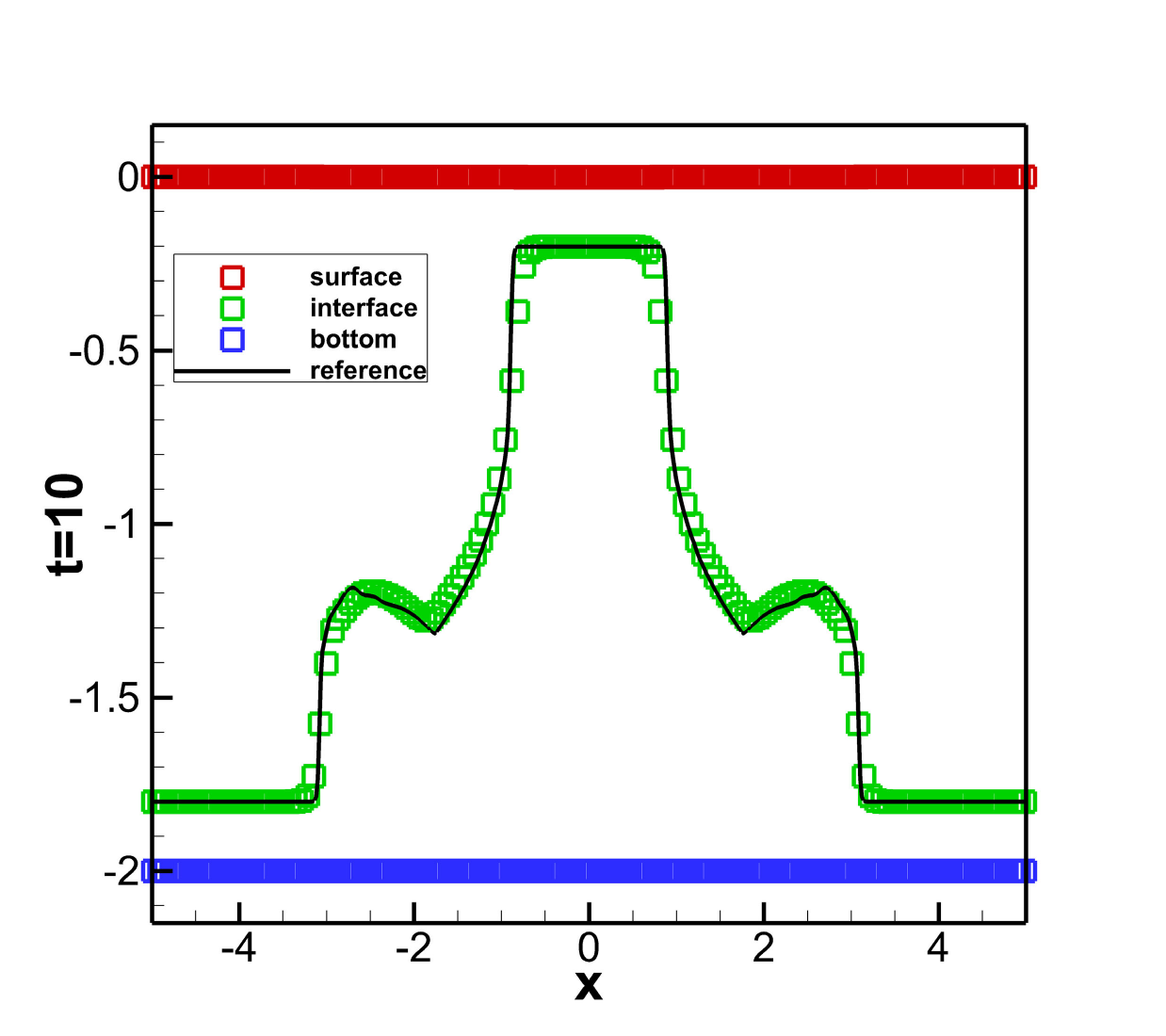}
  \qquad
\includegraphics[scale=0.35]{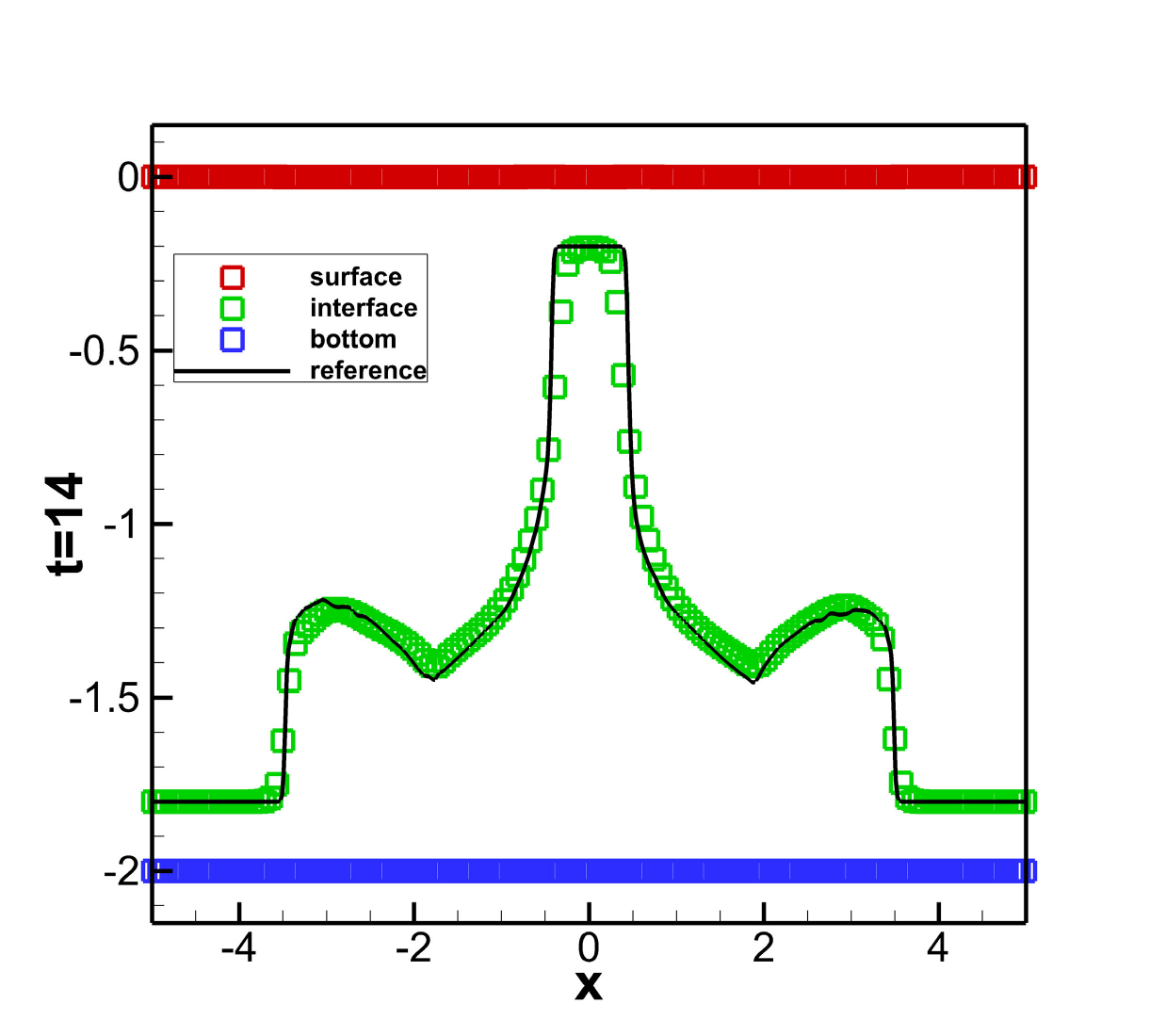} \\
 \includegraphics[scale=0.35]{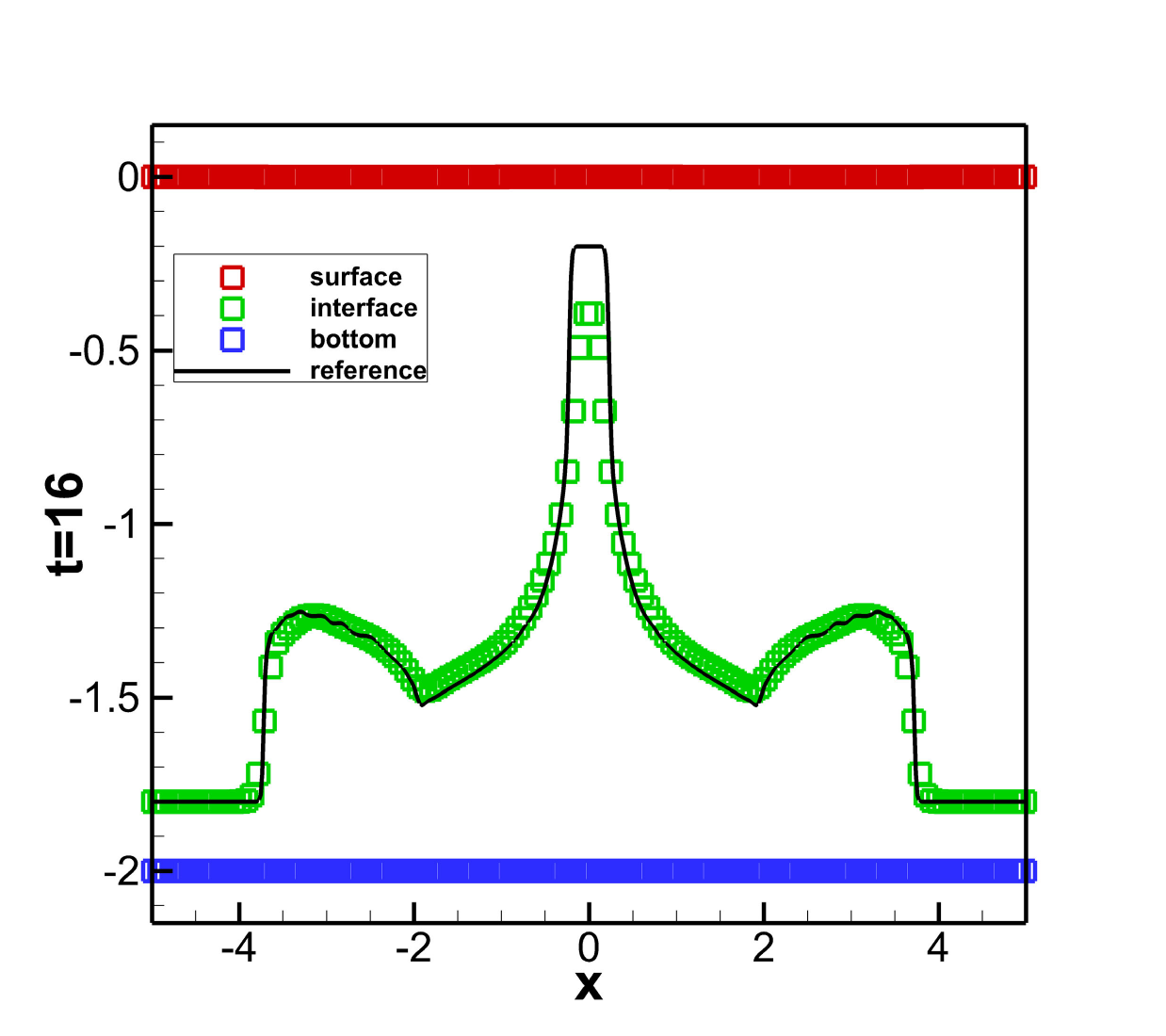}
  \qquad
\includegraphics[scale=0.35]{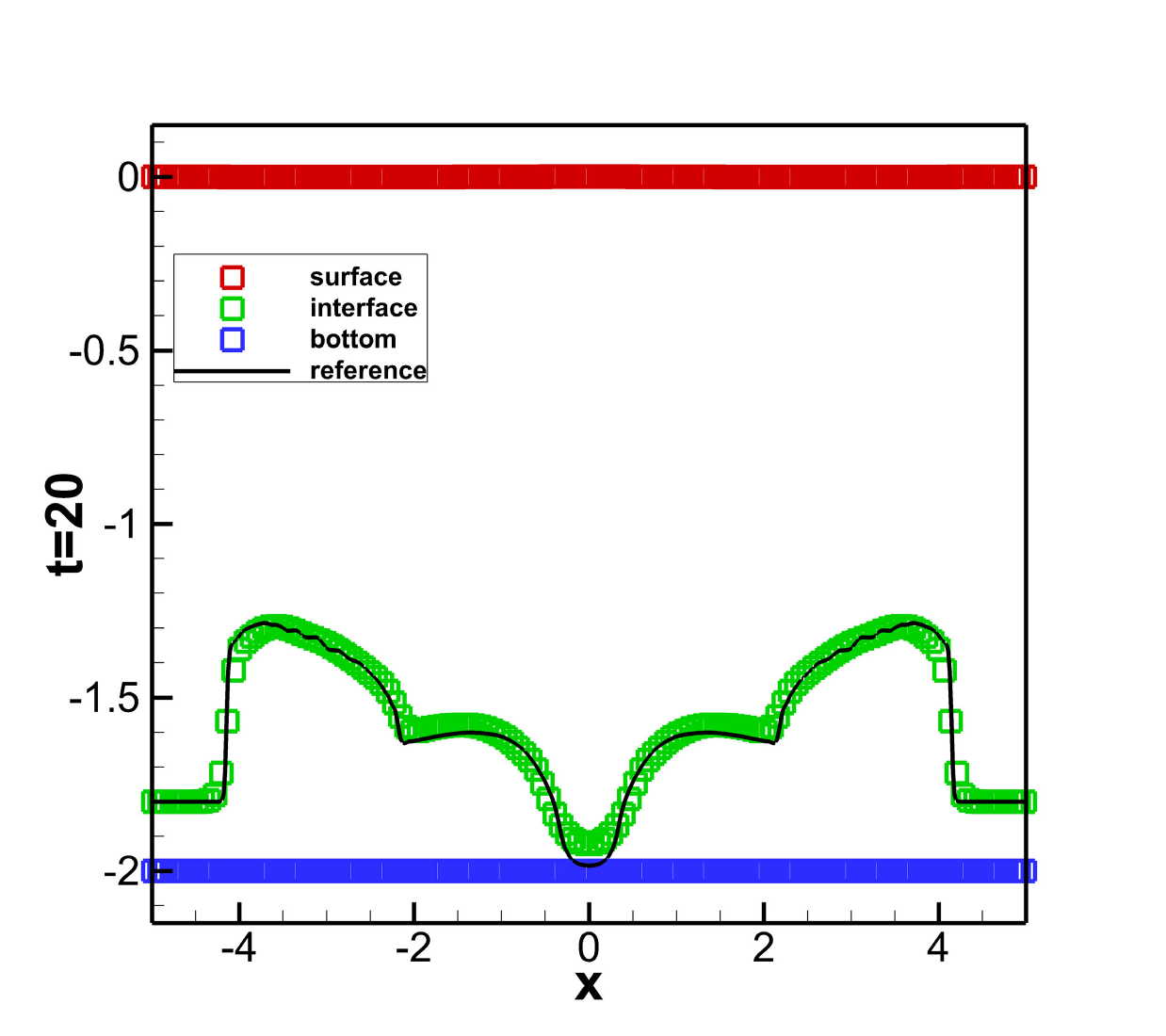}
    \caption{Section \ref{subsec:Internal_Circular_2d}: internal circular dam breaking in 2D with a flat bottom. Numerical solutions computed with $200\times 200$ grids. Diagonal slices of the water surface $h_1+h_2+Z$, interface $h_2+Z$ and bottom at times $t=4, 6, 10, 14, 16, 20$ (from top to bottom and from left to right). }   \label{Fig:internal_circular_bottom_interface}
\end{figure}
\subsubsection{Internal circular dam break over nonflat bottom topography}
\label{subsec:Internal_nonflat_2d}
In this test, we consider nonflat bottom $$Z(x, y)  = 0.5 e^{\left(x^2+y^2\right)}-2,$$
and the initial condition
$$
\left(h_1, q_1, p_1, h_2, q_2, p_2\right)(x, y, 0)= \begin{cases}(1.8,0,0,-1.8-Z,0,0), & \text { if } x^2+y^2>1, \\ (0.2,0,0,-0.2-Z,0,0), & \text { otherwise. }\end{cases}
$$
The constant gravitational acceleration is $g=9.81$ and the density ratio is $r=0.98$. The computational domain is $[-2,2] \times[-2,2]$ and a uniform mesh with $200\times 200$ grids is considered. The contour lines and diagonal slices of the water interface at $t=1, t=2$ are shown in Figure \ref{Fig:internal_2D_nonflat}.
 
\begin{figure}[H]
    \centering
 \includegraphics[scale=0.35]{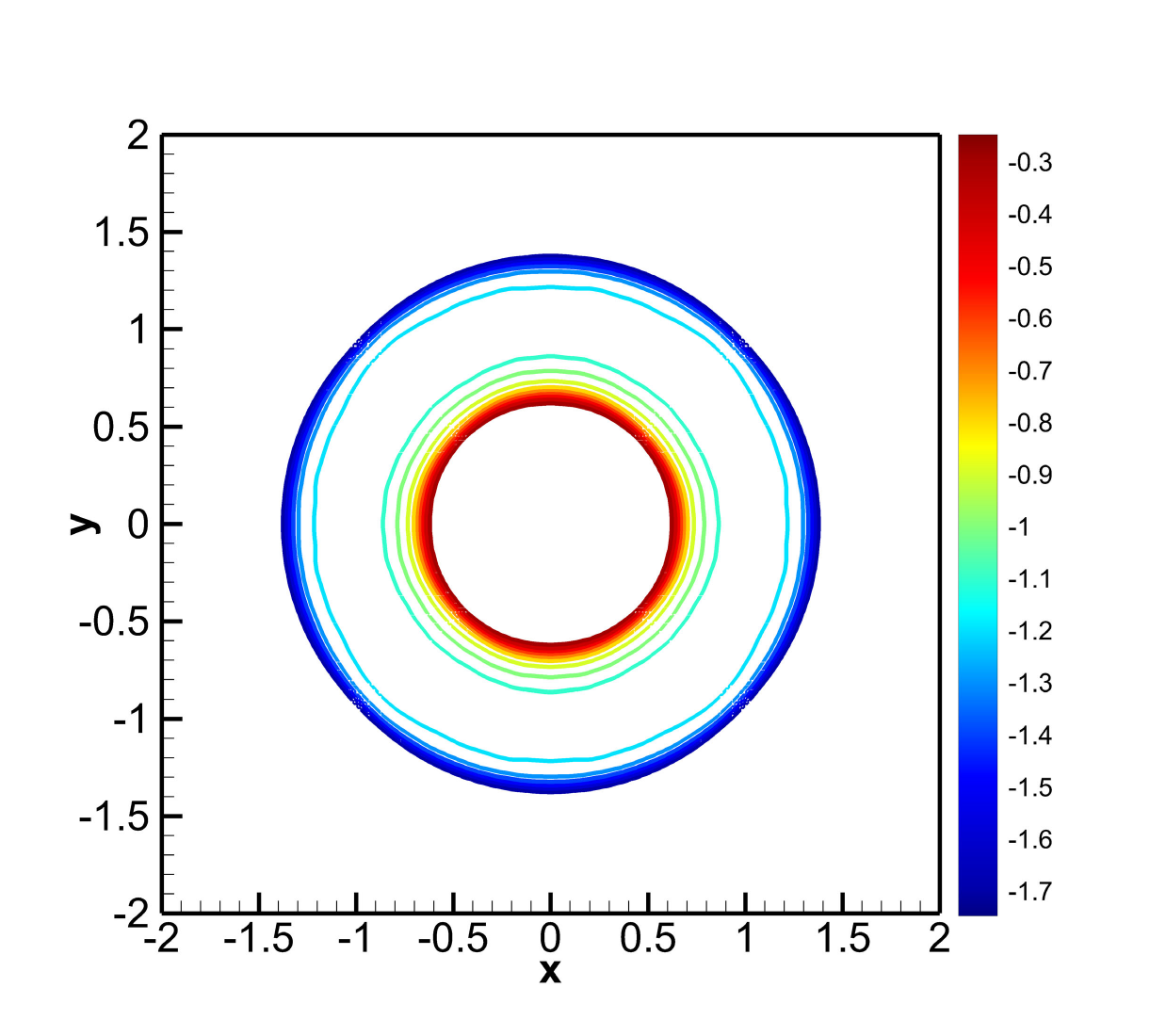} 
  \qquad
\includegraphics[scale=0.35]{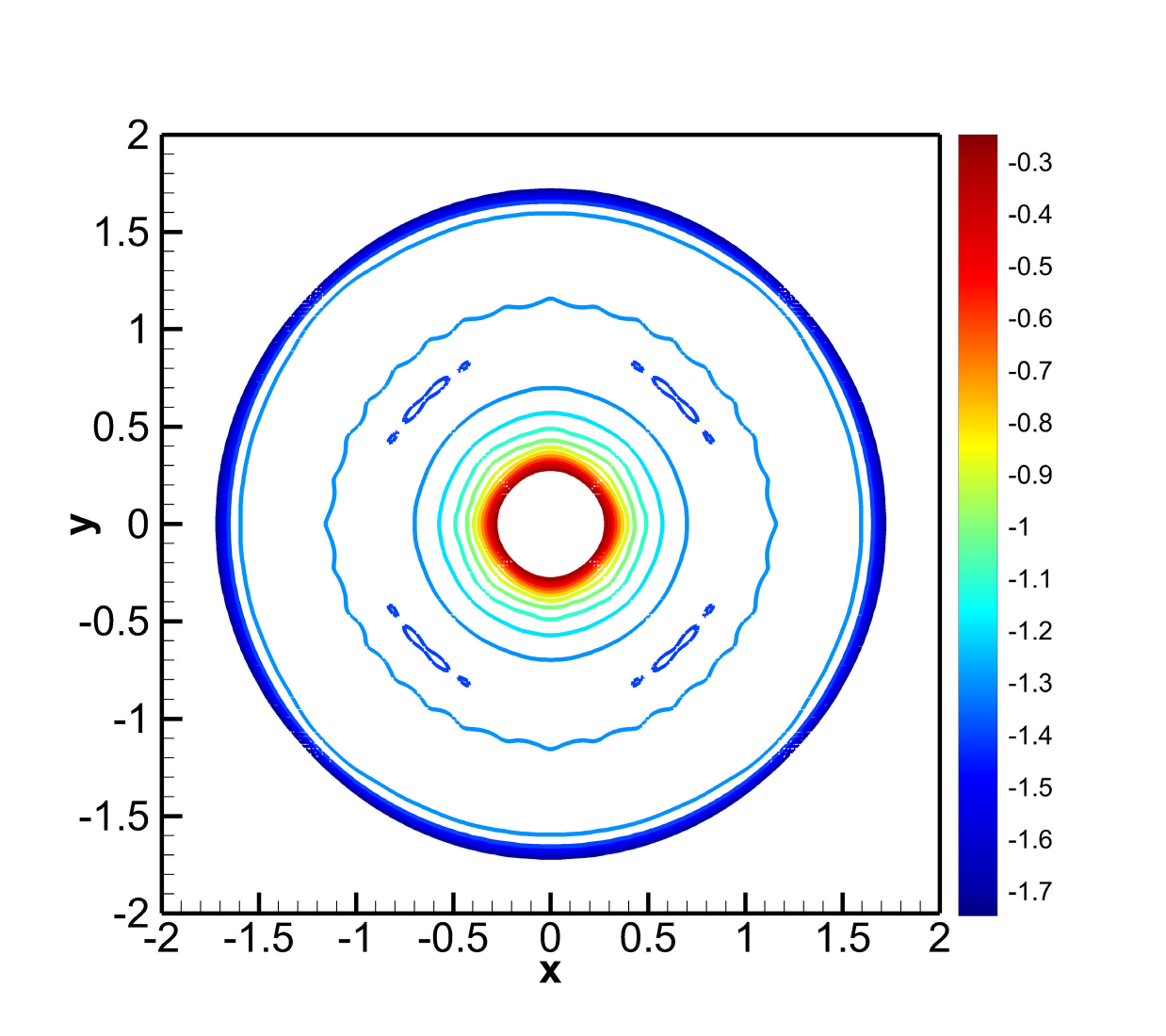}\\
\includegraphics[scale=0.35]{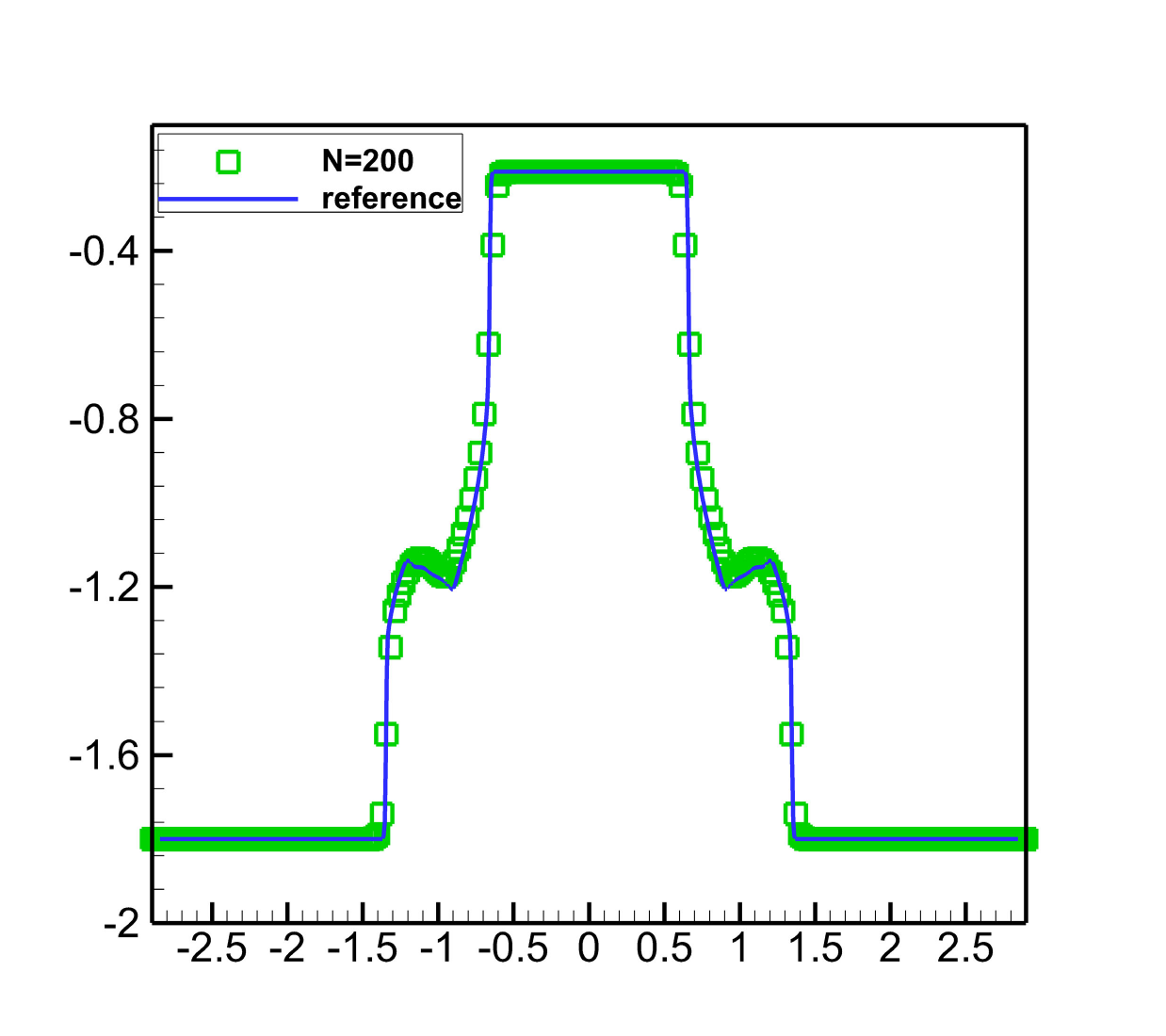}
  \qquad
\includegraphics[scale=0.35]{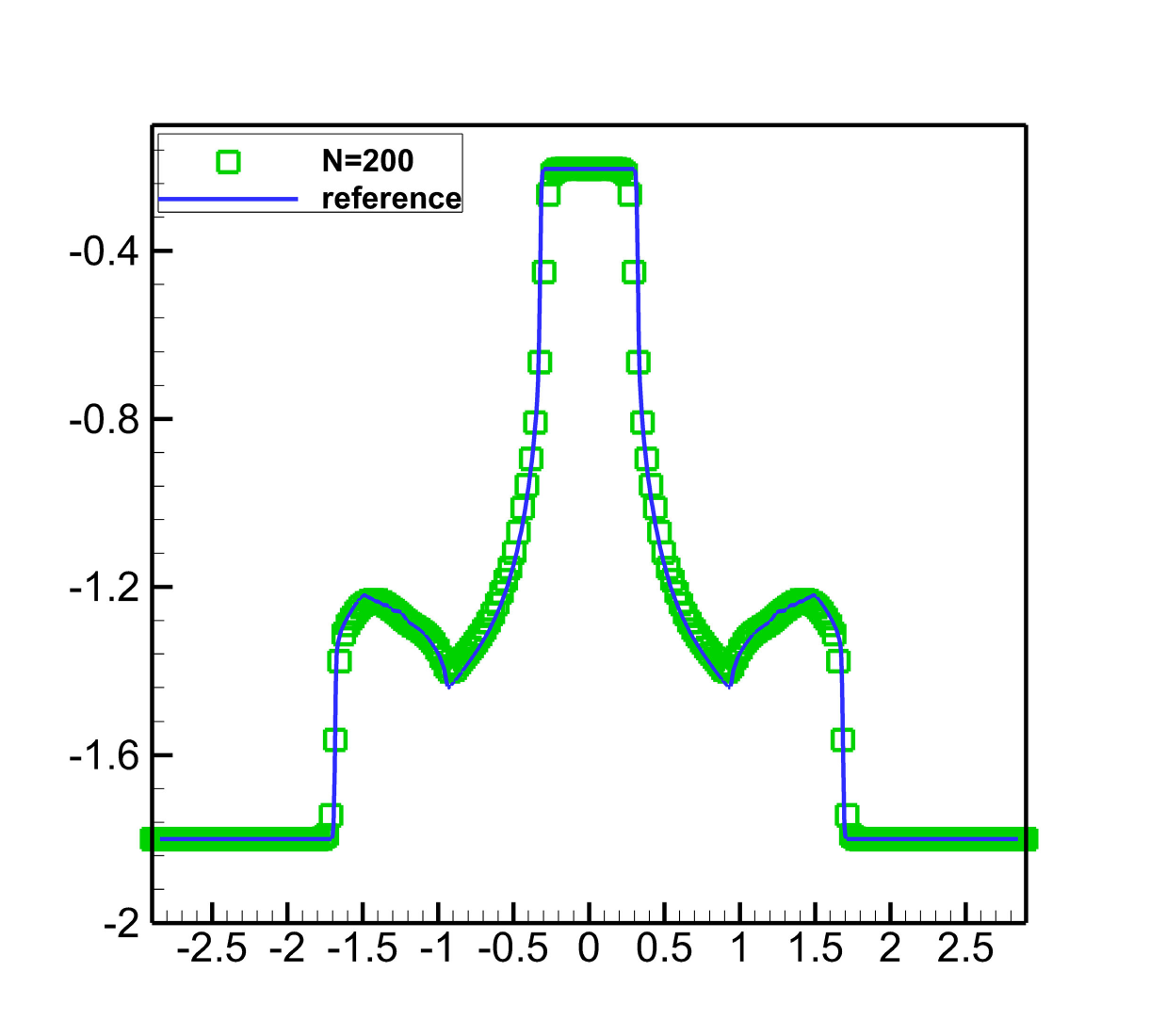}
    \caption{Section \ref{subsec:Internal_nonflat_2d}: internal circular dam break over nonflat bottom topography. Numerical solutions computed with $200\times 200$ grids.  Contour lines of the interface $h_2+Z$ at times $t=1$ (top left) and $t=2$ (top right); diagonal slices of the interface at time $t = 1$ (down left) and $t = 2$ (down right). }     \label{Fig:internal_2D_nonflat}
\end{figure}
\section{Conclusion}
\label{sec:conclusion}

A new family of high-order WENO finite-difference methods to solve hyperbolic nonconserative PDE systems have been proposed. These methods are based on a general strategy in which, instead of reconstructing fluxes using a WENO operator, what we reconstruct is the nonconservative products of the system which are computed using the selected family of paths. Moreover, if a Roe linearization is available, the nonconservative products can be computed through matrix-vector operations instead of path-integrals. The main advantages of this theory are the following:
\begin{itemize}
\item the high-order accuracy in space only depends on the order of the selected WENO operator, provided that the path satisfies the symmetry property;

\item no integrals in the cells have to be computed so that reconstructions with uniform accuracy in the entire cells are required;

\item a unified WENO framework that treats non-conservative equations consistently with conservative equations. The framework is compatible with state reconstruction techniques.

\end{itemize}
The methods have been extended to systems with source terms to design well-balanced methods. This methodology has been successfully applied to obtain high-order numerical methods for the 1D and 2D two-layer shallow water equations that preserve water-at-rest steady states. A number of numerical tests confirm the high-order accuracy of the methods as well as their shock-capturing and well-balanced properties.

The second strategy introduced here to design well-balanced methods can be combined with the technique developed in
\cite{math9151799}
to obtain numerical methods that preserve both water-at-rest and moving equilibria for the 1D shallow-water model or some particular families of stationary solutions in the 2D case: this will be done in an upcoming work.

Another further development concerns the convergence of the methods: as it happens for general finite-difference and related methods, the convergence of the numerical results to functions that are weak solutions of the system according to the selected family of paths is not ensured for the schemes introduced here. Some techniques recently developed in the context of high-order finite-volume methods in \cite{PIMENTELGARCIA2022111152, pimentelgarca2024highorder} can be adapted to the methods introduced here to ensure that isolated shocks that satisfy the generalized Rankine-Hugoniot associated to the selected family of paths are correctly captured.

\appendix
\section{Fifth-order WENOZ reconstruction}\label{app:A}

Let us recall here the expression of the fifth-order WENOZ reconstruction used to compute  ${F}_{i+1/2}$ as an example. The expression of ${F}_{i-1/2}$ can be obtained then using the mirror principle.
Given $F_j = F(U_j)$, $j = i-2,i-1,i,i+1,i+2$, ${F}_{i+1/2}$ is computed as follows:
\begin{equation}
    {F}_{i+1/2} =  \sum_{k = 0}^{2}\omega_{k} {F}_{i + 1/2}^{k},
 \end{equation}
 where
 \begin{equation*}
    F_{i + 1/2}^0=\frac{1}{6}(2F_{i - 2}-7F_{i - 1}+11F_{i}),
F_{i + 1/2}^1=\frac{1}{6}(-F_{i - 1}+5F_{i}+2F_{i + 1}),
F_{i + 1/2}^2=\frac{1}{6}(2F_{i}+5F_{i + 1}-F_{i + 2}),
\end{equation*}
are third-order interpolation formulas computed in 3 $substencils$, and 
$\omega_k$, $k=0,1,2$ are  nonlinear weights to be computed.
In the classical WENO-JS scheme \cite{JIANG1996202} the nonlinear weights 
are given by
\begin{equation*}
    \omega_{k}^{(JS)}=\frac{\alpha_{k}}{\sum_{j = 0}^{2}\alpha_{j}}, \quad \alpha_{k}=\frac{d_{k}}{(\beta_{k}+\varepsilon)^{p}}, \quad k = 0,1,2.
\end{equation*}
Here $d_{0}=1/10$, $d_{1}=6/10$, ${d}_{2}=3/10$, are  the ideal weights   leading to the global fifth-order interpolation formula
\begin{equation*}
   F_{i + 1/2} =\frac{1}{60}(2F_{i- 2}-13F_{i - 1}+47F_{i}+27F_{i + 1}-3F_{i + 2});
\end{equation*}
 $\beta_k$ are the smoothness indicators
\begin{equation*}
    \beta_{k}=\sum_{l = 1}^{2}\Delta x^{2l - 1}\int_{x_{i-\frac{1}{2}}}^{x_{i+\frac{1}{2}}}\left(\frac{d^{l}}{dx^{l}}F^{k}_{i+\frac{1}{2}}(x)\right)^{2}dx, \quad k = 0,1,2,
\end{equation*}
that are defined so that the weights are close to the ideal ones in smooth regions but, when a discontinuity is detected in the stencil,  the contribution of the sub-stencil containing it is close to 0 (non-oscillatory weights).
 WENO-Z scheme \cite{CASTRO20111766} propose the global smooth indicator $\tau_{5}=|\beta_{2}-\beta_{0}|$ to achieve the optimal accuracy at the critical points,
 \begin{equation*}
 \omega_{k}^{(Z)}=\frac{\alpha_{k}}{\sum_{l = 0}^{2}\alpha_{l}}, \quad
\alpha_{k}=d_{k}\left(1+\left(\frac{\tau_{5}}{\beta_{k}+\varepsilon}\right)^{p}\right), \quad  k = 0,1,2,
 \end{equation*}
 $ \varepsilon = 10^{-12}, p = 2$ are used in this study.
There is a vast literature related to the computation of optimal order smoothness indicators: see for instance \cite{BALSARA2016780, LEVY2000407}.

\section*{Acknowledgements}
The work of B. Ren is supported by the China Scholarship Council. The work of C. Parés is supported by Spanish projects PDC2022-133663-C21 and PID2022-137637NB-C21 funded by MCIN/AEI/10.13039/501100011033 and FSE+.

\noindent \section*{In memoriam}
 
\noindent This paper is dedicated to the memory of Prof. Arturo Hidalgo L\'opez
($^*$July 03\textsuperscript{rd} 1966 - $\dagger$August 26\textsuperscript{th} 2024) of the Universidad Politecnica de Madrid. 

\bibliographystyle{siam}
\bibliography{Reference}
\end{document}